\newtheorem{theorem}{Theorem}[section]
\newtheorem{lemma}[theorem]{Lemma}
\newtheorem{proposition}[theorem]{Proposition}
\newtheorem{corollary}[theorem]{Corollary}
\theoremstyle{definition}
\newtheorem{definition}[theorem]{Definition}
\theoremstyle{remark}
\newtheorem{remark}[theorem]{Remark}
\numberwithin{equation}{section}
\begin{document}

\title[On the existence and orbital stability of the physical ground states]{On orbital stability of the physical ground states of the NLS equations}

\author{Yavdat Ilyasov}
\address{Institute of Mathematics, Ufa Federal Research Centre, RAS, 
	Chernyshevsky str. 112, 450008 Ufa, Russia}
\curraddr{Universidade Federal de Goi\'as, Instituto de Matem\'atica\\ 74690-900, Goi\^ania - GO - Brazil }
\email{ilyasov02@gmail.com}
%

\subjclass[2020]{Primary 35Q55, 35A15; Secondary 35B35
 35A01, 35J61}



\keywords{Schr\"odinger equations, ground state, orbital stability, Rayleigh quotient,  prescribed action solution}
\begin{abstract}
We prove orbital stability result for physical ground states of  a nonlinear Schr\"{o}dinger (NLS) equation in the sense that the set of these ground states is contained in the set of prescribed mass solutions which is orbital stable by the  Cazenave-Lions theorem. We apply the nonlinear generalized Rayleigh quotients method which allows establishing a one-to-one correspondence between the values of the mass $m$, the frequency $\lambda$, and the action level $S$ of the physical ground states.

\end{abstract}
\maketitle

\section{Introduction}
We consider the  nonlinear Schr\"{o}dinger (NLS) equation   with combined power-type nonlinearities  
\begin{equation}\label{Sch}
	i\psi_t= \Delta \psi +\mu|\psi|^{p-2}\psi-|\psi|^{q-2}\psi, ~~(t,x) \in \mathbb{R}^+\times\mathbb{R}^N,  
\end{equation}
where $\psi$ is a complex-valued function of $(t, x)$, $p, q \in (2,2^*)$, $N\geq 3$, $2^*:=2N/(N-2)$, and $\mu\in \mathbb{R}$.   
NLS equations of this type have attracted much
attention in the last decades.  For the physical background, we refer the reader to \cite{Barashenkov, Afan, tao, ZaharSob} and references therein.
According to \cite{cazen,  tao}, the Cauchy problem for \eqref{Sch} with the initial value  $\psi_0 \in H^1:= H^1(\mathbb{R}^N)$ is locally well posed and has the unique local solution $\psi \in C([0, T(\psi_0)),H^1) \cap C^1([0, T(\psi_0)),H^{-1})$ for some $T(\psi_0)>0$,  which satisfies the \textit{ energy conservation law}:
$$
E \equiv H_\mu(\psi(t)):=\int \left(\frac{1}{2} |\nabla \psi|^2 - \frac{\mu}{p}| \psi|^p +\frac{1}{q}| \psi|^q\right)dx,
$$
and the \textit{mass (charge, particle
numbers) conservation law}:
$$
\alpha\equiv Q(\psi(t)):=\frac{1}{2}\int | \psi|^2 dx. 
$$
In the present paper, we study the existence and stability of the standing waves $\psi_\lambda=e^{i\lambda t}u$ of \eqref{Sch}, where  the amplitude function $u$    satisfies
\begin{equation}\label{1S}
	-\Delta u -\lambda u-\mu|u|^{p-2}u+|u|^{q-2}u =0, ~~x \in \mathbb{R}^N.   
\end{equation}
Here $\lambda \in \mathbb{R}$ is the \textit{frequency} of the standing wave. 
Notice that the action functional   
\begin{equation*}\label{Ener}
S_{\lambda,\mu}(u):=H_\mu(u)-\lambda Q(u) 
\end{equation*}
associated  with  problem is also a conserved quantity.  For a given $\lambda \in \mathbb{R}$, a solution $\bar{u}$ of \eqref{1S} is said to be \textit{ground state} if $S_{\lambda,\mu}(\bar{u})\leq S_{\lambda,\mu}(w)$, for any $w \in H^1\setminus 0$ such that $DS_{\lambda,\mu}(w)=0$. 
It is important to note that the set of ground states $G_\lambda(\mu)$ of \eqref{1S} (corresponding to this definition) may contain nonphysical solutions of \eqref{1S}, for example, solutions that disappear under a small perturbation of the parameter $\lambda$. In the present paper,  we will focus on a subset of ground states that are preserved under a small perturbation of  parameters of the problem. We call such a subset the set of physical ground states of \eqref{1S}.

We  also deal with the so-called \textit{prescribed action solution} of \eqref{1S}, i.e., a function $u^S \in H^1$  which for a given action $S \in \mathbb{R}^+$ satisfies
\begin{equation}\label{EqS}
	S_{\lambda,\mu}(u^S)=S ~~\mbox{and}~~ DS_{\lambda,\mu}(u^S)=0,
\end{equation}
 with  some $\lambda\leq 0$. 
Note that the standard approach  for problem \eqref{1S} considers solutions with \textit{prescribed frequency} $\lambda$, and  unknowns action $S$ and mass $\alpha$ (see, e.g, \cite{Beres,cazen}). An alternative formulation which has also been actively investigated over the last decades consists of finding the solution $u$ to \eqref{1S} having \textit{ prescribed mass} $\alpha=Q(u)$, while $\lambda$ and $S$ are unknown (see, e.g., \cite{Bellazzini, CazLion, Jean, saov, shibata} ). Note that  the frequency $\lambda$ can be also considered as a value of the following  conserved quantity
\begin{equation}\label{conservL}
	\lambda = \Lambda^S_\mu(\psi(t)):=\frac{H_\mu(\psi)-S}{Q(\psi)}.
\end{equation}
We call the frequency $\lambda$, action  $S$ and mass  $\alpha$ the\textit{ main parameters of problem }\eqref{Sch}.
Thus, from the mathematical point of view, all of these three approaches, namely, with prescribed frequency, with prescribed action and with prescribed mass are equally acceptable. Moreover, all of these approaches evidently are relevant from the physical point of view. In particular,  the approach with prescribed action arises in the study of inverse problems and the so-called spectral and scattering control problems (see, e.g., \cite{Colt, ilyaVal, Zakhar} and the references therein).  

We shall pay a special attention to the standing wave $\psi_\lambda$ of \eqref{Sch} with  $\lambda=0$ which we call the \textit{zero frequency solution}. The corresponding equation  \eqref{1S} with $\lambda=0$ for the amplitude function $u$ we shall call the \textit{zero frequency problem}. 
In the literature, such an equation is sometimes called the "zero mass" case problem (cf. \cite{Beres,  Daniele}). Besides the fact that this type of problems is interesting from a purely mathematical point of view (see, e.g. \cite{Aubin, Beres,  Daniele, GidasNiNir, Loewner, Talenti}), they often arise in physical problems, such as in the Euclidean Yang-Mills theory, equations of the filtration through porous media, the study of solutions with compact supports of the reaction-diffusion systems, plasma physics, among others  (see, e.g., \cite{ Anton, Gidas, miller, Rosen} and the references therein).

The orbital stability of the ground states of many equations can be often investigated using the Lyapunov function, determined by the action functional $S_{\lambda,\mu}(u)$ restricted to the manifold of functions $u$ with fixed mass integral $\alpha= Q(u)$.  The general theorem on orbital stability of solutions of nonlinear problems based on this idea was proved in the famous work by T. Cazenave \& P.L. Lions \cite{CazLion} in 1983. They showed  that  if the following\textit{ prescribed mass minimization problem}
\begin{equation}\label{minmassI}
	\hat{H}^{\alpha}_\mu:=\min\{H_\mu(u):Q(u)=\alpha, ~u \in H^1\setminus 0\}, ~\alpha>0
\end{equation}
has a solution, and  all of its minimizing sequences are relatively compact,  then the set of prescribed mass solutions:
$$
\mathcal{M}_\mu(\alpha):=\{u \in H^1:~\hat{H}^{\alpha}_\mu:=H_\mu(u), ~Q(u)=\alpha\}
$$ 
is orbital stable in the sense that for any $\varepsilon > 0$ there exists $\delta> 0$ such that  for any solution \eqref{Sch} with initial data $u(0)$ such that  $\inf_{\phi \in \mathcal{M}_\mu(\alpha)}\|u(0) -\phi\|_{H^1}< \delta$ there holds
$$
	\inf_{\phi \in \mathcal{M}_\mu(\alpha)}\|u(t)-\phi\|_{H^1}<\varepsilon,~~\forall t\geq 0.
$$
Nevertheless, it should be noted that this result does not always entail the orbital stability of the set of ground states of \eqref{1S}.
Indeed, for any $u_{\alpha} \in \mathcal{M}_\mu(\alpha)$ by the Lagrange multipliers rule there exists
a constant $\nu(u_{\alpha}) \in \mathbb{R}$ such that $u_{\alpha}$ solves \eqref{1S} with $\lambda=\nu(u_{\alpha})$. Thus to obtain the orbital stability of the set of ground states we need to know: 
	
\medskip	
\par
1)\,\textit{Whether the values of  Lagrange multipliers $\nu(u_{\alpha})$ for all functions $u_{\alpha}$ from the set of solutions of \eqref{minmassI} are identical, i.e.,} $\nu(u_{\alpha})\equiv \lambda_\alpha$, $\forall u_{\alpha} \in \mathcal{M}_\mu(\alpha)$ for some $\lambda_\alpha \in \mathbb{R}$?

\par
2) \textit{Whether the set of solutions $\mathcal{M}_\mu(\alpha)$ of problem \eqref{minmassI}  coincides with the set of ground states $G_{\lambda_\alpha}$ of \eqref{1S}}? 
\medskip	

Affirmative answers to these questions can be obtained only for some special cases of the NLS equation. In particular, this is true for the NLS equation with a  monomial nonlinearity, i.e., for $i\psi_t= \Delta \psi +|\psi|^{q-2}\psi$, where the Lagrange multiplier may be eliminated by scale changes  (see, e.g, \cite{LIONS}). This is also true in the case of dimension $N = 1$, which is characterized by the uniqueness of positive solution (see, e.g., \cite{jeanSc, McLeod, Pelir}).

The purpose of this article is to provide some answers to these questions. In this respect, we present a new type of variational functional associated with problem \eqref{Sch}, which makes it possible to find the ground state $\bar{u}$ of the problem, as well as allows to uniquely determine the corresponding values of  mass $\alpha$,  frequency $\lambda$ and  action level $S_{\lambda,\mu}$ of this state.
\par
Let us state our main results. 
Notice that  $u^S \in H^1\setminus 0$ is a prescribed action solution of \eqref{1S} with  action $S>0$, namely it satisfies to \eqref{EqS},  if and only if $u^S$ is a critical point of $\Lambda^S_\mu(u)$ with a critical value $\lambda$, i.e., 
$$
D\Lambda^S_\mu(u^S)=0~~\mbox{and}~~\lambda=\Lambda^S_\mu(u^S).
$$
For a given $S>0$, we call a solution $\hat{u}$ of \eqref{1S} the \textit{fundamental frequency solution} (respectively,  $e^{i\hat{\lambda} t}\hat{u}$ is said to be \textit{ fundamental frequency standing wave} of \eqref{Sch}) with a \textit{fundamental frequency} $\hat{\lambda}^S_\mu$ if 
$$
\hat{\lambda}^S_\mu=\Lambda^S_\mu(\hat{u})\leq \Lambda^S_\mu(w)
$$ 
for any  $w \in H^1\setminus 0$  such that $D\Lambda^S_\mu(w)=0$. For $S>0$, we denote by 
\begin{equation}\label{groundS}
	G^S(\mu):=\{u \in H^1\setminus 0: \Lambda_\mu^{S}(u)= \hat{\lambda}_\mu^{S}, ~D\Lambda^S_\mu(u^S)=0\}
\end{equation}
the set of fundamental frequency solutions of \eqref{1S} with the fundamental frequency $\hat{\lambda}_\mu^{S}$. Below we show that the existence of  fundamental frequency solution entails the existence of  ground state and that the converse is also true (see below Lemma \ref{lem1}). In what follows, we will refer to $G^S(\mu)$ the set of ground states, as well.
\begin{definition}\label{def2}
We call $u^S \in {G}^S(\mu)$ for $S>0$ the  physical ground state of  \eqref{1S} with respect to the action value if there exists a sequence $u^{S_m} \in G^{S_m}(\mu)$, $m=1,\ldots,$ such that $\lim_{m\to +\infty}S_m =S$ and  $u^{S_m} \to u^S$ in $H^1$ as $m \to +\infty$. The set of physical ground states of  \eqref{1S} with respect to  the action value we denote by $\hat{G}^S(\mu)$. 
\end{definition}
 It is easily seen that  any ground state $v_\lambda$ of \eqref{1S} from  the residual set $\hat{G}^{S,c}(\mu): =G^S(\mu)\setminus \hat{G}^S(\mu)$ disappears after a small perturbation of the parameter $S$. It can also makes sense to refer to the physical ground states $\hat{G}^S(\mu)$ as the \textit{ground states of the branches of solutions} of  \eqref{1S}.

Consider
$$
\alpha_0(\mu):=\inf\{\alpha>0: \hat{H}^{\alpha}_\mu<0\}.
$$
We need the following result of M. Shibata  (see Theorems 1.1, 1.3, Lemma 2.3 in \cite{shibata}): 
\medskip

\par \noindent
	Theorem \textsl{(M. Shibata, 2014)}. \,
		Let $\mu>0$ and $2<q<p<2+\frac{4}{N}$. \textit{Then $\alpha_0(\mu)>0$;
					 if $\alpha>\alpha_0(\mu)$, the set of prescribed mass solutions $\mathcal{M}_\mu(\alpha)$ is not empty; $\mathcal{M}_\mu(\alpha)=\emptyset$ if $\alpha \in (0,\alpha_0(\mu)]$; the function $\alpha \mapsto \hat{H}^{\alpha}_\mu$ is nonincreasing; $\mathcal{M}_\mu(\alpha)$ is an orbital stable set of \eqref{Sch}.}

\medskip

Our main result on the orbital stability of  the set of physical ground states of \eqref{1S}  is as follows:
\begin{theorem}\label{thm5} 
Assume that $2<q<p<2+\frac{4}{N}$, $\mu> 0$, $N\geq 3$.  Then for any given $\alpha \in (\alpha_0(\mu), +\infty)$, there exists ${S_\alpha}>0$ such that the set of physical ground states   $\hat{G}^{S_\alpha}(\mu)$ is contained in the orbital stable set of prescribed mass solutions $\mathcal{M}_\mu(\alpha)$.
Furthermore, all ground states from $\hat{G}^{S_\alpha}(\mu)$ have identical mass  and   frequency, i.e., $\alpha=Q(u)$, $\lambda_{S_\alpha}:=\Lambda^{S_\alpha}_\mu(u)$,  $\forall u \in \hat{G}^{S_\alpha}(\mu)$.  
\end{theorem}

The proof of this result is based on the application of the nonlinear generalized Rayleigh quotient method  \cite{ilyaReil} (\textit{the NG-Rayleigh quotient method} for short) to \eqref{1S} and using $\Lambda^S_\mu(u)$ as the Rayleigh quotient. A distinctive feature of the  NG-Rayleigh method is that it allows to find the critical values of the problems' parameters, and at the same time to convert the original variational functionals into functionals with simpler geometry (see \cite{MarcCarlIl, ilyaReil}).

According to the NG-Rayleigh quotient method \cite{ilyaReil}, the functional $\Lambda^S_\mu(u)$ corresponds to the following \textit{ NG-Rayleigh quotient}: 
\begin{equation}\label{R}
\lambda^S_{\mu}(u):=\frac{c_N^S(\int |\nabla u|^{2})^\frac{N}{(N-2)}-\mu \frac{2}{p}\int |u|^{p}+\frac{2}{q}\int |u|^{q}}{\int |u|^{2}}, \, \, u \in H^{1}\setminus \{0\},~~S>0, 
\end{equation}
where $c_N^S=(N-2)/(N^\frac{N}{(N-2)}S^\frac{2}{(N-2)})$, for $p, q \in (2,2^*)$ and  $\mu>0$. Below we will see that any critical point of  $\lambda^S_{\mu}(u)$ in $H^1\setminus 0$ corresponds, possibly after some scaling,  to a critical point of  $\Lambda^S_\mu(u)$, and thus it gives a solution of equation \eqref{1S} with prescribed action $S>0$. Moreover, the NG-Rayleigh quotient $\lambda^S_{\mu}(u)$ is characterized by properties similar to those that has the usual Rayleigh quotient of linear theory \cite{Reed}. In particular, similar to the spectral theory  the following  critical value
\begin{align}\label{RM2}
	&\hat{\lambda}^S_\mu:=\min_{u \in H^1\setminus 0 }\lambda^S_\mu(u )
\end{align}
plays a principal role in the investigation of  problem \eqref{1S}.  In the case $2<q<p<2^*$,   using  the NG-Rayleigh quotient method \cite{ilyaReil} as well, we introduce the following principal  critical value by $\mu$
\begin{equation}\label{eqRayM}
	\hat{\mu}^S= \inf_{ u \in \mathcal{D}\setminus 0} \mu^S(u).
\end{equation}
Here, $\mathcal{D}:=\mathcal{D}^{1,2}(\mathbb{R}^N)\cap L^q(\mathbb{R}^N)$,
\begin{equation}\label{Cpq2}
\mu^S(u):=	\left(\frac{c(p,q,N)}{S^\frac{2(p-q)}{(2^*-q)(N-2)}}\right) \frac{(\int |u|^{q})^\frac{2^*-p}{2^*-q}(\int |\nabla u|^{2})^{\frac{2^*(p-q)}{2(2^*-q)}}}{\int |u|^{p}},~~u \in H^1,
\end{equation}
where  constant $c(p,q,N)$  does not depend on $S$ (see below \eqref{cpqn}).

The NG-Rayleigh quotient $\mu^S(u)$ is characterized by the fact that its critical points correspond to the zero frequency solutions of \eqref{1S}.  Below we show that $\hat{\mu}^S>0$.
%


Our result on the existence of ground state and fundamental frequency solution of \eqref{1S} in nonzero  frequency case $\lambda<0$ is as follows:
\begin{theorem}\label{thm1} Let $S>0$. 
\par
$(1^o)$~  If $2<p<q<2^*$,  then for any $\mu>0$, \eqref{1S}  possesses  a fundamental frequency solution $\hat{u}^S_\mu$ with  prescribe action $S$ and  frequency $\hat{\lambda}^S_\mu=\Lambda^S_\mu(\hat{u}^S_\mu)<0$. 
\par
$(2^o)$~  If $2<q<p<2^*$, then for any $\mu> \hat{\mu}^S$,  \eqref{1S} possesses a fundamental frequency solution $\hat{u}^S_\mu$ with  prescribe action $S$ and  frequency $\hat{\lambda}^S_\mu=\Lambda^S_\mu(\hat{u}^S_\mu)<0$. 
 \par
$(3^o)$~  If $2<q<p<2^*$ and $0\leq \mu<\hat{\mu}^S$, then  \eqref{1S} has no   weak solutions in $H^1$ with any action such that $\tilde{S}\leq S$ and $\lambda<0$. 
\par
 Furthermore, $\hat{u}^S_\mu$ in $(1^o)$ and $(2^o)$ is a ground state of \eqref{1S} and a global minimum point of   $\lambda^S_{\mu}(u)$ in $H^1$; $\hat{u}^S_\mu>0$ in $\mathbb{R}^N$, $\hat{u}_\mu^S \in C^{2}(\mathbb{R}^N)$.
\end{theorem}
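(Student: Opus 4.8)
The plan is to reduce Theorem \ref{thm1} to the analysis of the NG-Rayleigh quotients $\lambda^S_\mu(u)$ and $\mu^S(u)$, exploiting the fact (asserted in the excerpt, to be proved via Lemma \ref{lem1}) that critical points of $\lambda^S_\mu$ correspond, after scaling, to solutions of \eqref{1S} with prescribed action $S$, and that the minimizers of $\lambda^S_\mu$ are exactly the fundamental frequency solutions, which in turn are the ground states. So the whole theorem comes down to: (i) showing the infimum $\hat\lambda^S_\mu$ in \eqref{RM2} is attained by a nonzero $H^1$ function under the stated hypotheses, (ii) showing the attained value is strictly negative, (iii) in part $(3^o)$ showing nonexistence, and (iv) getting the qualitative properties (positivity, regularity) of the minimizer.

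For $(1^o)$, the case $2<p<q<2^*$: here the quotient $\lambda^S_\mu(u)$ has the "defocusing-dominant" structure — the term $\tfrac{2}{q}\int|u|^q$ with the highest exponent $q$ appears with a plus sign and the focusing term $-\mu\tfrac{2}{p}\int|u|^p$ with the lower exponent. I would first establish that $\lambda^S_\mu$ is bounded below on $H^1\setminus 0$ and that $\hat\lambda^S_\mu<0$: negativity follows by testing with a fixed profile and then scaling it (either by amplitude $t\mapsto tu$ or by the dilation $u\mapsto u(\cdot/\tau)$), where the Gagliardo–Nirenberg interpolation between $L^p$ and the endpoints forces the numerator to go negative for a suitable choice of scale because the focusing term dominates in the appropriate regime. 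To get the minimizer I would take a minimizing sequence $(u_n)$, use the scaling invariance structure to normalize it (e.g. fix $\int|\nabla u_n|^2$ or $\int|u_n|^2$), obtain boundedness in $H^1$, and then run a concentration-compactness argument à la Lions: the subadditivity/strict-subadditivity of $S\mapsto\hat\lambda^S_\mu$ (or of the constrained minimization value) rules out dichotomy, and the strict negativity of the infimum rules out vanishing, so $u_n\to\hat u$ strongly (up to translations) and $\hat u$ is the desired minimizer. Then $\hat u^S_\mu$, after rescaling to have action exactly $S$, solves \eqref{1S} with $\hat\lambda^S_\mu=\Lambda^S_\mu(\hat u^S_\mu)<0$.

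For $(2^o)$ and $(3^o)$, the case $2<q<p<2^*$: now $q$ is the smaller exponent, $\mu$ multiplies the higher-power focusing term, and the role of $\hat\mu^S$ from \eqref{eqRayM}–\eqref{Cpq2} enters. The natural mechanism is: rewrite the condition "$\inf\lambda^S_\mu<0$" (which is what makes a negative-frequency solution possible) as a condition on $\mu$ by isolating $\mu$ — i.e. $\lambda^S_\mu(u)<0$ for some $u$ iff $\mu$ exceeds the quantity $\mu^S(u)$ after the appropriate scaling-out of $\int|\nabla u|^2$; taking the infimum over $u$ gives precisely the threshold $\hat\mu^S$. Thus for $\mu>\hat\mu^S$ one finds a test function with $\lambda^S_\mu<0$, and the same concentration-compactness/strict-subadditivity machinery as in $(1^o)$ produces the minimizer; one must check $\hat\mu^S>0$ (claimed in the excerpt, following from a Gagliardo–Nirenberg inequality bounding $\int|u|^p$ by $(\int|u|^q)^a(\int|\nabla u|^2)^b$ with the exponents matching those in \eqref{Cpq2}). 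For $(3^o)$, if $0\le\mu<\hat\mu^S$ then by the very definition of $\hat\mu^S$ one has $\lambda^S_\mu(u)\ge 0$ for all $u$, hence no critical point can have negative critical value; translating back through the scaling correspondence, any weak solution of \eqref{1S} with $\lambda<0$ and action $\tilde S\le S$ would yield a point with $\lambda^{\tilde S}_\mu<0$ (monotonicity in $S$ via the explicit $S$-powers in $c_N^S$ and in $c(p,q,N)/S^{\cdots}$), contradiction.

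The qualitative properties in the last sentence are the routine tail: positivity of $\hat u^S_\mu$ follows by replacing $u$ with $|u|$ (which does not increase $\lambda^S_\mu$) and then the strong maximum principle applied to the elliptic equation \eqref{1S}; $C^2$ regularity follows from elliptic bootstrap since $p,q<2^*$ and the nonlinearity is subcritical, plus $\hat u\in H^1$ decays at infinity. The main obstacle I expect is step (i)–(ii) done uniformly enough to run concentration-compactness: specifically, proving the strict binding/strict-subadditivity inequality that excludes dichotomy of minimizing sequences, since the quotient $\lambda^S_\mu$ is invariant under the scaling group and one must pin down the right normalization and show the "splitting" configuration is strictly more expensive — this is where the combined-nonlinearity interplay (two powers with opposite-sign coefficients) makes the usual single-power scaling arguments fail and forces a careful comparison using the homogeneity degrees $N/(N-2)$, $p$, $q$ appearing in \eqref{R}.
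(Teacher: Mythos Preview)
Your overall plan matches the paper's: reduce to the minimization \eqref{RM2}, show $\hat\lambda^S_\mu<0$ in the stated regimes, obtain a minimizer via a compactness argument, then invoke the correspondence (Lemma \ref{CritR} and Lemma \ref{lem1}) to conclude it is a fundamental frequency solution and a ground state. Your treatment of $(3^o)$ is also the paper's: if $\tilde u$ solves \eqref{1S} with $\lambda<0$ and action $\tilde S\le S$, then $\Lambda^{\tilde S}_\mu(\tilde u)=\lambda<0$; Pohozaev forces $\sigma^{\tilde S}(\tilde u)=1$, so $\lambda^{\tilde S}_\mu(\tilde u)=\Lambda^{\tilde S}_\mu(\tilde u)<0$, and monotonicity of $c_N^S$ in $S$ gives $\lambda^S_\mu(\tilde u)<0$, contradicting $\mu<\hat\mu^S$ via \eqref{MsuMin}. (You should make the Pohozaev step explicit rather than hiding it under ``the scaling correspondence''.)

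The one genuine difference is the compactness step, and here the paper's route is simpler than what you anticipate. You frame the ``main obstacle'' as proving a strict-subadditivity inequality to exclude dichotomy in the Lions framework. The paper avoids this entirely. After normalizing a minimizing sequence by $\|u_n\|_{L^2}=1$, showing boundedness (via \eqref{HSineq}), and using Lions' vanishing lemma to get a nonzero weak limit $\hat u$, the paper applies Brezis--Lieb to each term and exploits the single inequality
\[
\bigl(\|\nabla\hat u\|_{L^2}^2+\|\nabla(u_n-\hat u)\|_{L^2}^2\bigr)^{N/(N-2)}\ge \|\nabla\hat u\|_{L^2}^{2^*}+\|\nabla(u_n-\hat u)\|_{L^2}^{2^*},
\]
which holds because $N/(N-2)>1$. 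This superadditivity of the gradient term lets one write $\hat\lambda^S_\mu\ge \hat\lambda^S_\mu\cdot\frac{\|\hat u\|_{L^2}^2}{\bar Q}+\hat\lambda^S_\mu\cdot\frac{\lim\|u_n-\hat u\|_{L^2}^2}{\bar Q}=\hat\lambda^S_\mu$, forcing equality throughout and hence strong convergence. So the very feature you flag as problematic---the non-homogeneous exponent $N/(N-2)$ on $T(u)$---is what makes dichotomy impossible without any subadditivity computation. Your approach would work but is more laborious; the paper's trick is worth noting.
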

The existence of the spherically symmetric, decreases
with respect to $r:=|x|$ ground state  of \eqref{1S} under the assumptions of  $(1^o)$, $(2^o)$ in Theorem \ref{thm1}  follows from the  works by H. Berestycki \& P.-L. Lions  \cite{Beres} and  W. Strauss \cite{strauss}. The main novelty of the results in Theorem \ref{thm1}  consists of that the ground state $\hat{u}^S_\mu$ is obtained as a global minimum of   $\lambda^S_{\mu}(u)$ in $H^1$, which helps in further investigation of \eqref{1S}.

\begin{remark}
	Kato's Theorem \cite{kato}  implies that  \eqref{1S} can not has a weak,  spherically symmetric, decreases with respect to $r:=|x|$ positive solutions if $\lambda>0$, see  \cite{Beres}.
\end{remark}
\begin{remark} For another type of a threshold value $\hat{\mu}^*(\lambda)>0$, which depends on $\lambda<0$,  and  divides the intervals for parameter $\mu$ where equation \eqref{1S} may or may not have solutions, see \cite{Beres,strauss}.
\end{remark}

For the existence and non existence results of the  zero frequency solution of \eqref{1S} we have the following
\begin{theorem}\label{thm2} 
Let $S>0$. 
\par
$(1^o)$~  If $2<q<p<2^*$, then for $\mu=\hat{\mu}^S$,  the zero frequency problem \eqref{1S} possesses  a fundamental  frequency solution  $\hat{u}^S_{\hat{\mu}^S} \in \mathcal{D}$ with prescribe action $S$. Furthermore, $\hat{u}^S_{\hat{\mu}^S}$ is a  ground state of \eqref{1S} with $\lambda=0$ and a global minimum point of   $\mu^S(u)$ in $\mathcal{D}$;  $\hat{u}^S_{\hat{\mu}^S}>0$ in $\mathbb{R}^N$, $\hat{u}^S_{\hat{\mu}^S} \in C^2(\mathbb{R}^N)$.
\par
$(2^o)$~  If $2<p<q<2^*$, then  the zero frequency problem  \eqref{1S} has no  weak solutions in $\mathcal{D}$ for any $\mu>0$. 

\end{theorem}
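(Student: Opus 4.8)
Assertion $(2^o)$ is a non-existence statement obtained from a Pohozaev--Nehari obstruction, while $(1^o)$ rests on realizing $\hat u^S_{\hat\mu^S}$ as a global minimizer of the NG--Rayleigh quotient $\mu^S$ over $\mathcal{D}\setminus 0$, in the spirit of the NG--Rayleigh method and of the ``zero mass'' analysis of Berestycki--Lions.

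\textbf{Proof of $(2^o)$.} Suppose $w\in\mathcal{D}\setminus 0$ were a weak solution of \eqref{1S} with $\lambda=0$ and some $\mu>0$, i.e. $-\Delta w=\mu|w|^{p-2}w-|w|^{q-2}w$. By standard elliptic regularity (cf. \cite{Beres}), $w\in C^2(\mathbb{R}^N)$ and decays fast enough to justify the Nehari identity $\|\nabla w\|_2^2-\mu\|w\|_p^p+\|w\|_q^q=0$ and the Pohozaev identity $\tfrac{N-2}{2}\|\nabla w\|_2^2-\tfrac{N}{p}\mu\|w\|_p^p+\tfrac{N}{q}\|w\|_q^q=0$. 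Eliminating $\mu\|w\|_p^p$ between them gives $\bigl(\tfrac{N-2}{2}-\tfrac{N}{q}\bigr)\|\nabla w\|_2^2=N\bigl(\tfrac1p-\tfrac1q\bigr)\mu\|w\|_p^p$. Since $q<2^*$ the coefficient on the left is strictly negative, while $p<q$ and $\mu>0$ make the right-hand side $\ge0$; hence $\|\nabla w\|_2=0$ and then $\|w\|_p=0$, so $w\equiv0$, a contradiction. This proves $(2^o)$.

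\textbf{Proof of $(1^o)$.} The plan is to minimize $\mu^S$ over $\mathcal{D}\setminus 0$, attaining $\hat\mu^S$ of \eqref{eqRayM}. First, $\mu^S$ is invariant under the two scalings $u\mapsto tu$ and $u\mapsto u(\cdot/t)$, $t>0$, and (the exponent of $\int|\nabla u|^2$ in its numerator being positive when $q<p$) $\mu^S(u^*)\le\mu^S(|u|)\le\mu^S(u)$ for the Schwarz rearrangement $u^*$, by P\'olya--Szeg\H{o}. Interpolating $L^p$ between $L^q$ and $L^{2^*}$ and using $\|u\|_{2^*}\le C_N\|\nabla u\|_2$ yields $\int|u|^p\le C(\int|u|^q)^{\frac{2^*-p}{2^*-q}}(\int|\nabla u|^2)^{\frac{2^*(p-q)}{2(2^*-q)}}$, so $\hat\mu^S>0$. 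Now take a minimizing sequence; replacing each term by its symmetric rearrangement and using the two scalings, one may assume the $u_n$ are radial, nonnegative, nonincreasing in $r=|x|$, with $\|\nabla u_n\|_2^2=\|u_n\|_q^q=1$ and $\|u_n\|_p^p\to\ell>0$. Then $\{u_n\}$ is bounded in $\mathcal{D}^{1,2}\cap L^q$, so along a subsequence $u_n\rightharpoonup u$ in $\mathcal{D}^{1,2}$ and in $L^q$; the pointwise bounds $u_n(r)\le Cr^{-(N-2)/2}$ and $u_n(r)\le Cr^{-N/q}$ for radial nonincreasing elements of $\mathcal{D}^{1,2}\cap L^q$ (as in \cite{Beres,strauss}) make the $L^p$-mass uniformly tight at $0$ and at $\infty$ precisely because $q<p<2^*$, whence $u_n\to u$ in $L^p$ and $\|u\|_p^p=\ell>0$, so $u\ne0$. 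Weak lower semicontinuity then gives $\mu^S(u)\le\hat\mu^S$, hence $\mu^S(u)=\hat\mu^S$, $u$ is a minimizer, and the convergence is strong in $\mathcal{D}$. From $D\mu^S(u)=0$ and the correspondence recalled in the Introduction (critical points of $\mu^S$ are, up to the admissible scalings, zero-frequency solutions of \eqref{1S}), an appropriately rescaled copy $\hat u^S_{\hat\mu^S}$ solves \eqref{1S} with $\lambda=0$, $\mu=\hat\mu^S$ and action $S$, and is a global minimum of $\mu^S$ in $\mathcal{D}$; since $u=|u|^*\ge0$, writing the equation as $-\Delta\hat u^S_{\hat\mu^S}+V\hat u^S_{\hat\mu^S}=0$ with $V$ locally bounded and applying the strong maximum principle gives $\hat u^S_{\hat\mu^S}>0$ on $\mathbb{R}^N$, while an elliptic bootstrap gives $\hat u^S_{\hat\mu^S}\in C^2(\mathbb{R}^N)$. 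Finally, the ground-state and fundamental-frequency properties for $\lambda=0$ follow from Lemma \ref{lem1}, the minimality of $\hat\mu^S$, and Theorem \ref{thm1}$(3^o)$, which excludes solutions of \eqref{1S} with $\lambda<0$ when $\mu=\hat\mu^S$.

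\textbf{Main obstacle.} The delicate step is the compactness of the minimizing sequence in the zero-mass space $\mathcal{D}^{1,2}(\mathbb{R}^N)\cap L^q(\mathbb{R}^N)$: lacking an $L^2$-bound, one must combine radial symmetrization with the sharp pointwise decay estimates for radial $\mathcal{D}^{1,2}\cap L^q$ functions to recover strong $L^p$-convergence and a nontrivial limit --- this is the technical heart, in the spirit of Berestycki--Lions. A secondary point is certifying that the critical value is attained exactly at $\lambda=0$, and not at some $\lambda<0$, for which Theorem \ref{thm1}$(3^o)$ and the monotone dependence of the $\mu$-threshold on the data are used.
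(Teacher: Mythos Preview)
Your proof is largely correct and follows a genuinely different route from the paper's.

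For $(2^o)$, the paper observes that any zero-frequency solution with action $S$ is a critical point of $M^S$, so $\frac{d}{ds}M^S(s\bar u)\big|_{s=1}=0$; but when $2<p<q<2^*$ both exponents $2^*-p$ and $q-p$ in $M^S(su)$ are positive, hence $s\mapsto M^S(su)$ is strictly increasing and has no positive critical point. Your Pohozaev--Nehari obstruction is an equally valid, more classical alternative that bypasses the Rayleigh-quotient machinery entirely. (Minor slip: you write ``eliminating $\mu\|w\|_p^p$'' but in fact you eliminated $\|w\|_q^q$; the displayed identity and conclusion are correct.)

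For the existence part of $(1^o)$, the paper proceeds \emph{non-radially}: it normalizes to $\|u_i\|_{L^p}=\|u_i\|_{L^q}=1$, invokes Lions' lemma to translate away vanishing, and then a Brezis--Lieb splitting to rule out dichotomy. Your approach via Schwarz symmetrization and the Strauss-type pointwise bounds $u(r)\le Cr^{-(N-2)/2}$, $u(r)\le Cr^{-N/q}$ for radial nonincreasing elements of $\mathcal{D}^{1,2}\cap L^q$ is a legitimate alternative in the spirit of \cite{Beres,strauss}; the tightness of $\int|u_n|^p$ at $0$ and at $\infty$ follows exactly as you say because $q<p<2^*$. This is arguably more elementary, at the cost of producing only a radial minimizer (which suffices here).

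There is, however, a genuine gap in your last sentence. Lemma~\ref{lem1} is stated and proved under the hypothesis $\hat\lambda^S_\mu<0$, so it does not apply at $\lambda=0$; and Theorem~\ref{thm1}$(3^o)$ is stated for $0\le\mu<\hat\mu^S$ \emph{strictly}, so as written it does not cover $\mu=\hat\mu^S$. The paper handles these two points directly: (i) the fundamental-frequency property follows from Corollary~\ref{corNZ}(ii), which gives $\hat\lambda^S_{\hat\mu^S}\ge0$ and hence excludes any solution with $\lambda<0$ at $\mu=\hat\mu^S$; (ii) the ground-state property is obtained by a short $M^S$-argument: if $v\in\mathcal{D}\setminus 0$ solved \eqref{1S} with $\lambda=0$, $\mu=\hat\mu^S$ and action $\tilde S<S$, then $M^{\tilde S}(v)=\hat\mu^S$, but $M^S(v)<M^{\tilde S}(v)$ since $c_N^S$ is decreasing in $S$, contradicting $\hat\mu^S=\inf_{\mathcal{D}\setminus 0}M^S$. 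You should replace the appeal to Lemma~\ref{lem1} and Theorem~\ref{thm1}$(3^o)$ by these two direct arguments.
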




The assumption of  $(1^o)$ in Theorem \ref{thm2} corresponds to the sufficient conditions introduced in \cite{Beres}  for the existence of spherically symmetric  ground states from $\mathcal{D}$ of the "zero-mass" case problem. However, it appears that the result on the absence of solutions of problems with zero frequency,  as in $(2^o)$ of Theorem \ref{thm2}, has not been known before.
In the present work, it is shown that this result has a fairly simple proof. It should be emphasized that this simplicity is achieved owing to the application of the NG-Rayleigh quotient method.

\begin{remark}
	 In the case of $\lambda=0$ in \eqref{1S}, the dependence of the problem on $\mu$  can be neglected, since the change of variables $u=(1/\mu)^{1/(p-q)}v(x/\mu^{(q-2)/2(p-q)})$ transforms \eqref{1S} into 
	$-\Delta v -|v|^{p-2}v+|v|^{q-2}u =0, ~~x \in \mathbb{R}^N$. However for NLS equation \eqref{Sch} the dependence on the parameter $\mu$ can not be neglected. 
	\end{remark}

\begin{theorem}\label{thmB}
Let $S>0$. 
\par
$(1^o)$~  If $2<p<q<2^*$,  then for any $\mu>0$, there exists   a physical ground state of \eqref{1S}, i.e., $\hat{G}^S(\mu) \neq \emptyset$.
\par
$(2^o)$~  If $2<q<p<2^*$, then for any $\mu> \hat{\mu}^S$,  there exists   a  physical ground state of \eqref{1S}, i.e., $\hat{G}^S(\mu) \neq \emptyset$.

 Furthermore, there exists a unique $\alpha^S$ such that $\alpha^S= Q(u)$,    $\forall u \in \hat{G}^S(\mu)$,  for $\mu>0$ and $\mu> \hat{\mu}^S$, respectively.
 \end{theorem}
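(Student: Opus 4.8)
The plan is to obtain a physical ground state as a strong $H^1$-limit of fundamental frequency solutions along action values $S_m\to S$. Fix such a sequence. In case $(2^o)$ the dependence of $\hat\mu^{S'}$ on $S'$ visible in \eqref{eqRayM}--\eqref{Cpq2}, namely $\hat\mu^{S'}=(S/S')^{2(p-q)/((2^*-q)(N-2))}\hat\mu^S$, is continuous, so $\mu>\hat\mu^S$ implies $\mu>\hat\mu^{S_m}$ for all large $m$; hence in both cases Theorem \ref{thm1} provides fundamental frequency solutions $\hat u^{S_m}_\mu\in G^{S_m}(\mu)$, which are global minimizers of $\lambda^{S_m}_\mu$, positive, with frequency $\hat\lambda^{S_m}_\mu=\Lambda^{S_m}_\mu(\hat u^{S_m}_\mu)<0$. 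Since Schwarz symmetrization does not increase the Dirichlet integral while preserving the $L^2$, $L^p$, $L^q$ norms, and hence does not increase $\lambda^{S_m}_\mu$, one may take each $\hat u^{S_m}_\mu$ spherically symmetric and nonincreasing in $r=|x|$.

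Next I would derive uniform bounds. Evaluating the minimality $\hat\lambda^{S_m}_\mu=\lambda^{S_m}_\mu(\hat u^{S_m}_\mu)\le\lambda^{S_m}_\mu(\varphi)$ on a fixed $\varphi\in H^1\setminus 0$ (and using $S_m\to S$) bounds $\hat\lambda^{S_m}_\mu$ from above by a negative constant; combining the Nehari and Pohozaev identities for \eqref{1S} with the action constraint $S_{\hat\lambda^{S_m}_\mu,\mu}(\hat u^{S_m}_\mu)=S_m$ then expresses $\int|\nabla\hat u^{S_m}_\mu|^2$, $\int|\hat u^{S_m}_\mu|^p$, $\int|\hat u^{S_m}_\mu|^q$, $\int|\hat u^{S_m}_\mu|^2$ through $S_m$ and $\hat\lambda^{S_m}_\mu$, giving a uniform $H^1$ bound together with a lower bound $\liminf_m\hat\lambda^{S_m}_\mu>-\infty$ and uniform positive lower bounds on the $L^p$- and $L^q$-norms. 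Passing to a subsequence, $\hat\lambda^{S_m}_\mu\to\hat\lambda^S_\mu<0$, $\hat u^{S_m}_\mu\rightharpoonup u^S$ weakly in $H^1$, and by the compact embedding $H^1_{\mathrm{rad}}(\mathbb{R}^N)\hookrightarrow L^r(\mathbb{R}^N)$ for $r\in(2,2^*)$ also strongly in $L^p$ and $L^q$; the lower bounds force $u^S\neq 0$.

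Then I would identify the limit. Letting $m\to\infty$ in \eqref{1S} (strong $L^p$, $L^q$ convergence for the nonlinear terms, $\hat\lambda^{S_m}_\mu\to\hat\lambda^S_\mu$, weak convergence for the remaining linear term) yields $DS_{\hat\lambda^S_\mu,\mu}(u^S)=0$; weak lower semicontinuity of the Dirichlet integral together with strong convergence of the $L^2$, $L^p$, $L^q$ norms --- here the invariance of $\lambda^S_\mu$ under the dilations $u\mapsto u(\cdot/\tau)$ and the sign $\hat\lambda^S_\mu<0$ are what let one pass the quotient to the limit --- gives $\lambda^S_\mu(u^S)\le\liminf_m\lambda^{S_m}_\mu(\hat u^{S_m}_\mu)=\hat\lambda^S_\mu$, hence $\lambda^S_\mu(u^S)=\hat\lambda^S_\mu$ and $D\lambda^S_\mu(u^S)=0$, so by Theorem \ref{thm1} $u^S\in G^S(\mu)$ with action exactly $S$. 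Equality in the quotient forces $\int|\nabla\hat u^{S_m}_\mu|^2\to\int|\nabla u^S|^2$, so $\hat u^{S_m}_\mu\to u^S$ strongly in $H^1$, i.e. $u^S\in\hat G^S(\mu)\neq\emptyset$. For the mass, I would observe that $S'\mapsto\hat\lambda^{S'}_\mu$ is, being an infimum of functions that are affine and nondecreasing in $t:=(S/S')^{2/(N-2)}$, a concave nondecreasing function of $t$, hence differentiable off a countable set; at a point of differentiability every minimizer of $\lambda^{S'}_\mu$ realizes the same value of the dilation-invariant ratios $(\int|\nabla u|^2)^{N/(N-2)}/\int|u|^2$ and $(-\tfrac{2\mu}{p}\int|u|^p+\tfrac2q\int|u|^q)/\int|u|^2$, and feeding these into the Nehari--Pohozaev relations and $S_{\hat\lambda^{S'}_\mu,\mu}(u)=S'$ pins down $\int|u|^2$, so $Q\equiv\alpha(S')$ on $G^{S'}(\mu)$; as $\alpha$ is continuous by the strong-convergence argument, every physical ground state has mass $\lim_m\alpha(S_m)=\alpha(S)=:\alpha^S$, independent of the element and of the approximating sequence.

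I expect the compactness step to be the crux: ruling out vanishing of $\{\hat u^{S_m}_\mu\}$ and, above all, upgrading weak to strong $H^1$ convergence of the gradients, which relies on the radial compact embedding together with the dilation invariance and sign properties of the NG-Rayleigh quotient $\lambda^S_\mu$. A secondary difficulty is the (at most countably many) action values at which $S'\mapsto\hat\lambda^{S'}_\mu$ fails to be differentiable: there the uniqueness of the mass of $G^S(\mu)$ cannot be read off the envelope/concavity argument directly and must be deduced from the physical-ground-state limit itself together with one-sided differentiability, or else by showing that such exceptional values carry no physical ground state.
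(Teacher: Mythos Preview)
Your overall strategy---pick $S_m\to S$, take $\hat u^{S_m}_\mu\in G^{S_m}(\mu)$, extract a strong $H^1$ limit and show it lies in $G^S(\mu)$---is exactly the paper's. The execution differs in two places.

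\textbf{Compactness.} You pass to radial minimizers and use the Strauss compact embedding $H^1_{\rm rad}\hookrightarrow L^r$, $r\in(2,2^*)$; this is legitimate, but note it does not give $L^2$ compactness, so your claim of ``strong convergence of the $L^2$ norms'' needs the Nehari identity (with $\hat\lambda^{S_m}_\mu\to\hat\lambda^S_\mu<0$) to recover $Q(\hat u^{S_m}_\mu)\to Q(u^S)$. The paper avoids radial symmetry altogether: it normalizes $Q(\hat u^{S_m}_\mu)=1$ via dilation invariance, observes that $\lambda^{S_m}_\mu(\hat u^{S_m}_\mu)-\lambda^{S}_\mu(\hat u^{S_m}_\mu)=(c_N^{S_m}-c_N^S)T^{N/(N-2)}(\hat u^{S_m}_\mu)\to0$, and combines this with the continuity $\hat\lambda^{S_m}_\mu\to\hat\lambda^S_\mu$ (Corollary~\ref{corContin}) to conclude that $(\hat u^{S_m}_\mu)$ is a \emph{minimizing sequence for $\lambda^S_\mu$}. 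Then the compactness argument already carried out in Lemma~\ref{lemMinim} (Lions' non-vanishing lemma plus Brezis--Lieb) applies verbatim. This reduction buys a shorter proof with no new analysis.

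\textbf{Mass uniqueness.} Your envelope/concavity argument gives differentiability of $S'\mapsto\hat\lambda^{S'}_\mu$ only off a countable set, and you correctly flag the residual problem at exceptional $S$; neither of your suggested patches (one-sided derivatives, or ``exceptional values carry no physical ground state'') actually closes it---at a genuine corner the left and right derivatives differ, and a physical ground state can arise as a one-sided limit with either value of $Q$. The paper's route is Proposition~\ref{PMon}, which gives the explicit two-sided sandwich
\[
-\,\frac{2(S_2-S_1)(S_2/S_1)^{N/(N-2)}}{Q(\hat u^{S_2}_\mu)}<\hat\lambda^{S_2}_\mu-\hat\lambda^{S_1}_\mu<-\,\frac{2(S_2-S_1)(S_1/S_2)^{N/(N-2)}}{Q(\hat u^{S_1}_\mu)}
\]
valid for \emph{every} choice $\hat u^{S_j}_\mu\in G^{S_j}(\mu)$. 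Dividing by $S_2-S_1$ and sending $S_m\to S$ along any sequence, the strong convergence $\hat u^{S_m}_\mu\to\hat u^S_\mu\in\hat G^S(\mu)$ squeezes the difference quotient to $-2/Q(\hat u^S_\mu)$; since the derivative of the scalar function $\hat\lambda^{(\cdot)}_\mu$ is single-valued, $Q$ is forced to be constant on $\hat G^S(\mu)$ (Lemma~\ref{DifIDENT}). This is the key lemma you are missing; with it there is no exceptional set to worry about.
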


\begin{remark}
	By analogy with Definition \ref{def2}, one can introduce a set of physical solutions $\hat{\mathcal{M}}_\mu(\alpha)$ of \eqref{minmassI}  with prescribed mass $\alpha$, namely,  a point $u_{\alpha} \in \mathcal{M}_\mu(\alpha)$ is said to be   physical solution of \eqref{minmassI} if there exists a sequence $u_{\alpha_m} \in \mathcal{M}_\mu(\alpha_m)$, $m=1,\ldots,$ such that $\lim_{m\to +\infty}\alpha_m =\alpha$ and  $u_{\alpha_m} \to u_{\alpha}$ in $H^1$ as $m \to +\infty$. We suppose that  $\hat{G}^{S_\alpha}(\mu)=\hat{\mathcal{M}}_\mu(\alpha)$. 
\end{remark}

The article is organized as follows. In Section 2, we give some
preliminary information and introduce the nonlinear generalized Rayleigh quotients. In Section 3, we prove that the functional $\lambda^S_\mu(u )$ possesses a global minimizer. In Section 4, we prove that $ \mu^S(u)$ attains its global minimum  in $\mathcal{D}$. Section 5 is devoted to the investigation of the behavior of solutions depend on the main parameters of problem $\lambda$, $S$ and $\alpha$. In Section 6, we conclude the proofs of Theorems \ref{thm1}, \ref{thm2} and \ref{thmB}.
Finally, in Section 7, we prove Theorem \ref{thm5}.  

\section{The nonlinear generalized Rayleigh quotients }\label{Extend}
We denote by $H^1:=H^1(\mathbb{R}^N)$
the Sobolev space of functions with  norm
$$
\|u\|_{1}=(\int (|u|^2+|\nabla u|^2)^{1/2},
$$
and use the space
$$
\mathcal{D}^{1,2}:=\mathcal{D}^{1,2}(\mathbb{R}^N) := \{u \in L^{2^*}(\mathbb{R}^N): \nabla u \in L^{2} (\mathbb{R}^N)\}
$$
with inner product $(u,v):=\int \nabla u \cdot \nabla v \, dx$ and norm $\|u\|_{\mathcal{D}^{1,2}}:=\int |\nabla u |^2 \, dx$.  For abbreviation, we denote
$\int \cdots \, :=\int_{\mathbb{R}^N} \cdots\, dx$. For  Gateaux differentiable functional $F: H^1 (\mathcal{D}^{1,2})\to \mathbb{R}$, its derivative  at $u \in H^1(\mathcal{D}^{1,2})$ we denote by $DF(u)$. 
For $u \in H^1$, we denote $u_\sigma:=u(x/\sigma)$, $x \in \mathbb{R}^N$,  $\sigma>0$, and 
$$
 T(u):=\int|\nabla u|^{2},~~Q(u):=\int |u|^{2}, ~~A(u):= \int |u|^{p},~~B(u):=\int |u|^{q}.
$$
With these notations we have
$$
S_{\lambda,\mu}(u):=\frac{1}{2}T(u)-\lambda \frac{1}{2}Q(u) -\mu \frac{1}{p} A(u) +\frac{1}{q}B(u).
$$
For $S\geq 0$, introduce the so-called \textit{action-level Rayleigh quotient}
\begin{align}
	\Lambda^S_{\mu}(u):= \frac{\frac{1}{2}T(u)-\mu \frac{1}{p}A(u) +\frac{1}{q}B(u)-S}{\frac{1}{2}Q(u)}.
\end{align}
Notice that for any $S \in \mathbb{R}$ and $\lambda \in \mathbb{R}$,
	\begin{itemize}
			\item $\Lambda^S_\mu(u)=\lambda~\Leftrightarrow~ S_{\lambda,\mu}(u)=S$,
	\item $D\Lambda^S_\mu(u)=0$ with $\Lambda^S_\mu(u)=\lambda ~\Leftrightarrow~DS_{\lambda,\mu}(u)=0$.
\end{itemize}
Let $u \in H^1\setminus 0$, $S>0$, $\sigma>0$, consider
\begin{equation*}
		\Lambda^S_{\mu}(u_\sigma)=\frac{\sigma^{-2}\frac{1}{2}T(u)-\mu \frac{1}{p}A(u) +\frac{1}{q}B(u)-\sigma^{-N}S}{\frac{1}{2}Q(u)}.
	\end{equation*}
Then
$$
\frac{d}{d\sigma} (\Lambda^S_\mu)(u_\sigma )=0 ~\Leftrightarrow~\frac{2}{Q(u)}\left(-\sigma^{-3} T(u)+\frac{NS}{\sigma^{N+1}}\right)=0 ~\Leftrightarrow~\sigma=\sigma^S(u):=\left(\frac{NS}{T(u)}\right)^\frac{1}{N-2}.
$$
Hence, in accordance with the NG-Rayleigh quotient method \cite{ilyaReil}, we are able to introduce the following NG-Rayleigh quotient (cf. \eqref{R})
\begin{equation}\label{LCN}
	\lambda^S_\mu(u ):=\lambda^S_\mu(u_{\sigma^S(u)} )=\frac{2}{Q(u)}\left(\frac{c_N^S}{2}T^\frac{N}{(N-2)}(u)-\mu \frac{1}{p}A(u)+\frac{1}{q}B(u)\right),
\end{equation}
where
\begin{equation*}\label{CN}
	c_N^S=\frac{(N-2)}{N^\frac{N}{(N-2)}S^\frac{2}{(N-2)}}.
\end{equation*}
Observe that $\lambda^S_\mu(u )$ is a $0$-homogeneous functional with respect to the scale change $\sigma \mapsto u_\sigma$, i.e.,
$	\lambda^S_\mu(u_\sigma )=\lambda^S_\mu(u )$, $\forall \sigma >0$.

\begin{lemma} \label{CritR}
For $S>0$, $u\in H^1\setminus 0$,  the following  $D\lambda^S_\mu(u )=0$, $ \lambda^S_\mu(u )=\lambda$, $\sigma^S(u)=1$ is satisfied if and only if  $u$ is a weak solution of \eqref{1S} with prescribed action $S$.
\end{lemma}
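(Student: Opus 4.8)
The plan is to establish the biconditional by relating critical points of the NG-Rayleigh quotient $\lambda^S_\mu(u)$ to critical points of the action-level quotient $\Lambda^S_\mu(u)$, using the scaling construction that produced $\lambda^S_\mu$ in the first place. First I would unwind the definition: by construction $\lambda^S_\mu(u) = \Lambda^S_\mu(u_{\sigma^S(u)})$, where $\sigma^S(u) = (NS/T(u))^{1/(N-2)}$ is the unique critical point of $\sigma \mapsto \Lambda^S_\mu(u_\sigma)$. The key observation is that $\sigma^S(u) = 1$ holds precisely when $T(u) = NS$, i.e. when $u$ already sits at the scale that makes the scaling map stationary.

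For the forward direction, suppose $D\lambda^S_\mu(u) = 0$, $\lambda^S_\mu(u) = \lambda$, and $\sigma^S(u) = 1$. Since $\sigma^S(u)=1$ we have $u_{\sigma^S(u)} = u$, so $\Lambda^S_\mu(u) = \lambda^S_\mu(u) = \lambda$, which by the bullet-point equivalences recalled just before the lemma means $S_{\lambda,\mu}(u) = S$. For the derivative: I would differentiate the identity $\lambda^S_\mu(v) = \Lambda^S_\mu(v_{\sigma^S(v)})$ in $v$ at $v = u$ using the chain rule. Because $\sigma = \sigma^S(u)$ is a critical point of $\sigma \mapsto \Lambda^S_\mu(u_\sigma)$, the term coming from differentiating $\sigma^S(\cdot)$ drops out, leaving $D\lambda^S_\mu(u)[\cdot]$ equal to the partial derivative of $\Lambda^S_\mu$ evaluated along the scaled configuration — and when $\sigma^S(u)=1$ this is simply $D\Lambda^S_\mu(u)$, composed with the (invertible) scaling action on test functions. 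Hence $D\lambda^S_\mu(u) = 0$ forces $D\Lambda^S_\mu(u) = 0$; combined with $\Lambda^S_\mu(u) = \lambda$, the second bullet-point equivalence gives $DS_{\lambda,\mu}(u) = 0$, i.e. $u$ is a weak solution of \eqref{1S} with action $S$.

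For the converse, suppose $u$ is a weak solution of \eqref{1S} with prescribed action $S$: then $DS_{\lambda,\mu}(u) = 0$ and $S_{\lambda,\mu}(u) = S$ for the corresponding $\lambda$. The Pohozaev/Derrick identity — obtained by testing \eqref{1S} against the scaling generator $x\cdot\nabla u$, equivalently by using $\frac{d}{d\sigma}S_{\lambda,\mu}(u_\sigma)|_{\sigma=1} = 0$ — yields a relation between $T(u)$ and the other terms; crucially, combined with $S_{\lambda,\mu}(u) = S$ it pins down $T(u) = NS$, which is exactly $\sigma^S(u) = 1$. Then $u_{\sigma^S(u)} = u$ gives $\Lambda^S_\mu(u) = S_{\lambda,\mu}$-value $= \lambda$ and hence $\lambda^S_\mu(u) = \lambda$, and running the chain-rule computation above in reverse (the scaling derivative vanishes by the Pohozaev identity, the partial derivative vanishes since $DS_{\lambda,\mu}(u)=0 \Leftrightarrow D\Lambda^S_\mu(u)=0$) gives $D\lambda^S_\mu(u) = 0$.

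The main obstacle I anticipate is the bookkeeping in the chain-rule differentiation of $v \mapsto \lambda^S_\mu(v) = \Lambda^S_\mu(v_{\sigma^S(v)})$: one must be careful that $\sigma^S$ is itself a $C^1$ functional of $v$ (it is, since $T(v) > 0$ on $H^1\setminus 0$ and the formula is explicit), and that the "envelope" cancellation — the vanishing of the contribution of $D\sigma^S(v)$ because $\sigma^S(v)$ is a critical point of the one-variable map — is applied correctly. A secondary technical point is verifying the Pohozaev identity in sufficient generality (weak $H^1$ solutions, $N \ge 3$, subcritical powers), but this is standard for \eqref{1S} and I would cite it rather than reprove it. Everything else is routine manipulation of the explicit formulas for $\sigma^S(u)$, $c_N^S$, and the quadratic-in-$\sigma$ structure of $\Lambda^S_\mu(u_\sigma)$.
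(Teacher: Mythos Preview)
Your proposal is correct, but it takes a more conceptual route than the paper. The paper's own proof is a single sentence: it follows by direct computation of $D\lambda^S_\mu(u)$ from the explicit formula \eqref{LCN}. In that approach one differentiates the closed-form expression for $\lambda^S_\mu(u)$, observes that when $\sigma^S(u)=1$ (i.e.\ $T(u)=NS$) the coefficient $c_N^S\,\frac{N}{N-2}\,T(u)^{2/(N-2)}$ in front of the gradient term collapses to $1$, and reads off the weak form of \eqref{1S}; the action constraint $S_{\lambda,\mu}(u)=S$ then drops out of the same algebra. Your envelope/chain-rule argument on $\lambda^S_\mu(v)=\Lambda^S_\mu(v_{\sigma^S(v)})$ arrives at the same place but explains structurally \emph{why} the computation works, and it makes explicit the role of the Pohozaev identity in the converse direction---a step the paper's one-liner leaves implicit but which is genuinely needed to recover $\sigma^S(u)=1$ from $DS_{\lambda,\mu}(u)=0$ and $S_{\lambda,\mu}(u)=S$. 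Either approach is fine; yours is slightly longer but more transparent.
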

\begin{proof}
 The proof easily follows by direct calculations of the derivative $D\lambda^S_\mu(u )$.	
\end{proof}
\begin{remark}
		In view of  the homogeneity of $\lambda^S_\mu(u )$, we may always assume that any critical point $u$ of $\lambda^S_\mu(u )$ satisfy $\sigma^S(u)=1$. 
\end{remark}

Consider
\begin{equation*}\label{tR}
\lambda^S_\mu(tu )=\frac{1}{Q(u)}\left(t^\frac{4}{N-2}c_N^S T^\frac{N}{(N-2)}(u)-\mu \frac{2}{p}t^{p-2}A(u)+\frac{2}{q}t^{q-2}B(u)\right).
\end{equation*}
A point $t_0>0$ is said to be  \textit{fibering critical point }of $\lambda^S_\mu(tu )$ if $(d \lambda^S_\mu(tu)/dt)|_{t=t_0}=0$. Observe that if $p, q \in (2,2^*)$, then $\frac{4}{N-2}>\max\{p-2, q-2\}$. From this it is easily seen  that
				 \begin{itemize}
					 \item if $2<p<q<2^*$, then  for any $u \in H^1\setminus 0$ and $\mu>0$, the fibering function 
			$\lambda^S_\mu(tu )$ has a unique fibering critical point $t=t(u)$. Furthermore, $\lambda^S_\mu(t(u)u )=\min_{t>0}\lambda^S_\mu(tu )<0$,  
				 $\forall u \in H^1\setminus 0$, $\forall \mu>0$.
		\item If $2<q<p<2^*$, then for  $u \in H^1\setminus 0$,  the fibering function $\lambda^S_\mu(tu )$  may has at most two nonzero critical points $t^0_\mu(u), t^1_\mu(u)$ s.t. $0<t^0_\mu(u)\leq t^1_\mu(u)$. 
\end{itemize}

We thus have 
\begin{corollary}\label{corNZ0} Assume that $2<p<q<2^*$. Then for any $\mu> 0$, $\hat{\lambda}_\mu^S\equiv \inf_{u \in H^1\setminus 0}\lambda^S_\mu(u )<0$
	\end{corollary}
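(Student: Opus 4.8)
The goal is to prove Corollary~\ref{corNZ0}: under the assumption $2<p<q<2^*$ one has $\hat{\lambda}_\mu^S = \inf_{u\in H^1\setminus 0}\lambda^S_\mu(u) < 0$ for every $\mu>0$. The plan is to read off the conclusion from the structural fact established just above the corollary statement, namely that for $2<p<q<2^*$ the fibering function $t\mapsto\lambda^S_\mu(tu)$ has, for each fixed $u\in H^1\setminus 0$ and each $\mu>0$, a unique fibering critical point $t=t(u)$ at which it attains its minimum over $t>0$, and this minimum value is strictly negative.

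First I would fix any $u\in H^1\setminus 0$ and look at
\[
\lambda^S_\mu(tu)=\frac{1}{Q(u)}\Bigl(t^{\frac{4}{N-2}}c_N^S\,T^{\frac{N}{N-2}}(u)-\mu\tfrac{2}{p}t^{p-2}A(u)+\tfrac{2}{q}t^{q-2}B(u)\Bigr).
\]
Since $p<q$ and $2<p$, the three powers satisfy $\frac{4}{N-2}>q-2>p-2>0$. For small $t>0$ the dominant term among the $t$-powers present is the lowest one, $t^{p-2}$, carrying the negative coefficient $-\mu\frac{2}{p}A(u)<0$ (here $A(u)=\int|u|^p>0$ and $\mu>0$); hence $\lambda^S_\mu(tu)<0$ for all sufficiently small $t>0$. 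Combined with $\lambda^S_\mu(tu)\to+\infty$ as $t\to+\infty$ (the $t^{4/(N-2)}$ term with positive coefficient dominates) and the positivity/behavior of the coefficients, the function starts at $0$ at $t=0^+$, becomes negative, and eventually goes to $+\infty$; so its global minimum over $t>0$ is attained at an interior point and is strictly negative. This is exactly the bracketed assertion in the excerpt, which I am entitled to use.

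Next, taking the infimum of $\lambda^S_\mu(t(u)u)<0$ over all $u\in H^1\setminus 0$, and noting that $\hat{\lambda}^S_\mu=\inf_{v\in H^1\setminus 0}\lambda^S_\mu(v)\le \inf_{u}\min_{t>0}\lambda^S_\mu(tu)$, I conclude $\hat{\lambda}^S_\mu<0$. Strictly speaking $\hat\lambda^S_\mu$ could a priori be $-\infty$; but the corollary only asserts the sign, so it suffices to exhibit a single $v$ with $\lambda^S_\mu(v)<0$, e.g.\ $v=t(u_0)u_0$ for any fixed $u_0\in H^1\setminus 0$. There is essentially no obstacle here: the one point that needs a word of care is justifying that the minimum of the fibering function is genuinely negative rather than merely nonpositive, and that is already built into the displayed fact "$\lambda^S_\mu(t(u)u)=\min_{t>0}\lambda^S_\mu(tu)<0$" quoted above, which rests only on the elementary ordering of exponents $\frac{4}{N-2}>\max\{p-2,q-2\}$ and the sign of the coefficient of the lowest power. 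Thus the corollary is immediate from the preceding discussion.
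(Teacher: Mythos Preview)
Your proof is correct and follows essentially the same approach as the paper. The paper states the corollary as an immediate consequence (``We thus have'') of the bulleted observation that, when $2<p<q<2^*$, the fibering function $t\mapsto \lambda^S_\mu(tu)$ attains a strictly negative minimum for every $u\in H^1\setminus 0$; you have simply spelled out this deduction in slightly more detail.
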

 In the case $2<q<p<2^*$, we need to know the value of $\mu$ when the fibering function $\lambda^S_\mu(tu )$ has two distinct  critical points $t^0_\mu(u), t^1_\mu(u)$, i.e., when $0<t^0_\mu(u)< t^1_\mu(u)$. In order to find such values, we consider the following Rayleigh quotient
	\begin{equation*}\label{MU}
		M^S(u):=\frac{\frac{c_N^S}{2}T^\frac{N}{(N-2)}(u)+\frac{1}{q}B(u)}{\frac{1}{p}A(u)}.
	\end{equation*}
Notice that $M^S(u)=\mu~\Leftrightarrow~\lambda^S_\mu(u )=0$.

For every $u \in H^1\setminus 0$,  consider the corresponding fibering function
\begin{equation*}\label{fiberMu}
	M^S(su):=\frac{\frac{c_N^S}{2}s^{2^*-p}T^\frac{N}{(N-2)}(u)+\frac{1}{q}s^{q-p}B(u)}{\frac{1}{p}A(u)}, ~~s>0.
\end{equation*}
It is easily seen that  the function $s\mapsto  M^S(su)$ has an unique global minimum point $s^S(u)>0$ such that $M^S(su)$ is monotone decreasing in $(0,s^S(u))$ and monotone increasing in $(s^S(u),+\infty)$. To find $s^S(u)$, we calculate
$$
\frac{d}{ds}M^S(su)=0 ~\Leftrightarrow~  s^{2^*-q}c_{p,q,N,S}T^\frac{N}{(N-2)}(u)=B(u),
$$
where $ c_{p,q,N,S}=c_N^S q(2^*-p)/2(p-q)$. Thus   for every $u \in H^1\setminus 0$ the function $M^S(su)$ attains its global minimum at the unique point 
$$
s^S(u)=\left(\frac{B(u)}{ c_{p,q,N,S}T^\frac{N}{(N-2)}(u)}\right)^{1/(2^*-q)}.
$$
Hence, we are able to introduce the following NG-Rayleigh quotient (cf. \eqref{Cpq2})
\begin{equation}\label{MSu}
	\mu^S(u):=M^S(s_m(u)u)=\min_{s\geq 0}M^S(su)= C_{p,q,N,S}\frac{B^\frac{2^*-p}{2^*-q}(u)T^{\frac{2^*(p-q)}{2(2^*-q)}}(u)}{A(u)},
\end{equation}
where 
\begin{equation*}\label{Cpq}
	 C_{p,q,N,S}= \frac{c(p,q,N)}{S^\frac{2(p-q)}{(2^*-q)(N-2)}},
\end{equation*} 
\begin{equation}\label{cpqn}
	c(p,q,N)=\left(\frac{(N-2)}{N^\frac{N}{(N-2)}}\frac{q(2^*-p)}{2(p-q)}\right)^\frac{(p-q)}{(2^*-q)}\frac{p(2^*-q)}{q(2^*-p)}.
\end{equation}
It is easily seen that $\mu^S(u)$ is $0$-homogeneous with respect to the both actions: $t \mapsto tu$ and $\sigma \mapsto u_\sigma\equiv u(\cdot/\sigma)$, i.e.,
 \begin{equation}\label{HOM}
	\mu^S(u_\sigma)=\mu^S(u), ~~\mu^S(su)=\mu^S(u), ~~\forall \sigma>0, ~\forall s>0,~\forall u \in H^1\setminus 0.
\end{equation}
	\begin{lemma}\label{CritM}
	Assume that $D\mu^S(u_0)=0$, $ \mu^S(u_0)=\mu_0$ s. t. $\sigma(u_0)=1$, $t^1_{\mu_0}(u_0)=1$, then $DS_{\lambda,\mu}(u_0)=0$, $S_{\lambda,\mu}(u_0)=S$ with $\lambda=0$, $\mu=\mu_0\equiv \mu^S(u_0)$.
\end{lemma}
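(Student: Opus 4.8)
The strategy is to reduce the statement to Lemma~\ref{CritR}: it is enough to verify the three conditions $\sigma^S(u_0)=1$, $\lambda^S_{\mu_0}(u_0)=0$ and $D\lambda^S_{\mu_0}(u_0)=0$, since Lemma~\ref{CritR} (with $\mu=\mu_0$) then yields that $u_0$ is a weak solution of \eqref{1S} with prescribed action $S$ and frequency $\lambda=\lambda^S_{\mu_0}(u_0)=0$, which by \eqref{EqS} is exactly $DS_{0,\mu_0}(u_0)=0$ and $S_{0,\mu_0}(u_0)=S$. The first condition is a hypothesis; for the other two the bridge between the two NG-Rayleigh quotients is the elementary identity
\[
	\lambda^S_\mu(u)=\frac{2A(u)}{p\,Q(u)}\bigl(M^S(u)-\mu\bigr),\qquad
	\lambda^S_\mu(tu)=\frac{2\,t^{p-2}A(u)}{p\,Q(u)}\bigl(M^S(tu)-\mu\bigr),
\]
obtained by writing $\frac{c_N^S}{2}T^{N/(N-2)}(u)+\frac1qB(u)-\mu\frac1pA(u)=\frac1pA(u)\,(M^S(u)-\mu)$.

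First I would use the hypothesis $t^1_{\mu_0}(u_0)=1$ to show $s^S(u_0)=1$. By the identity above the sign of the fibering function $t\mapsto\lambda^S_{\mu_0}(tu_0)$ equals that of $M^S(tu_0)-\mu_0$; since $\mu_0=\mu^S(u_0)=\min_{s>0}M^S(su_0)$ is attained only at $s=s^S(u_0)$ (recall $s\mapsto M^S(su_0)$ is strictly decreasing on $(0,s^S(u_0))$ and strictly increasing afterwards), this fiber is nonnegative and vanishes exactly at $t=s^S(u_0)$, where it attains its global minimum $0$. On the other hand, because $q-2<p-2<\frac4{N-2}$ the term $\frac2q t^{q-2}B(u_0)$ dominates near $t=0$, so $\lambda^S_{\mu_0}(tu_0)\to0^+$ and is increasing as $t\to0^+$; hence the fiber must first climb to a local maximum and then fall back to $0$ at $s^S(u_0)$, so that its two fibering critical points satisfy $t^0_{\mu_0}(u_0)<t^1_{\mu_0}(u_0)=s^S(u_0)$. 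Thus $t^1_{\mu_0}(u_0)=1$ forces $s^S(u_0)=1$, whence $\mu_0=\mu^S(u_0)=M^S(s^S(u_0)u_0)=M^S(u_0)$ and therefore, by the first identity, $\lambda^S_{\mu_0}(u_0)=0$ --- the second required condition. Together with $\sigma^S(u_0)=1$ this also re-derives $S_{0,\mu_0}(u_0)=S$ through $\Lambda^S_{\mu_0}(u_0)=\lambda^S_{\mu_0}(u_0)=0$.

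It remains to prove $D\lambda^S_{\mu_0}(u_0)=0$. Differentiating $\lambda^S_{\mu_0}(u)=\frac{2A(u)}{pQ(u)}(M^S(u)-\mu_0)$ at $u_0$ and using $M^S(u_0)-\mu_0=0$, the term in which the prefactor is differentiated drops out, leaving $D\lambda^S_{\mu_0}(u_0)=\frac{2A(u_0)}{pQ(u_0)}\,DM^S(u_0)$, so it suffices to show $DM^S(u_0)=0$. This I obtain from $D\mu^S(u_0)=0$ by an envelope argument: writing $\mu^S(u)=M^S(s^S(u)u)$ and differentiating, the contribution of $Ds^S(u_0)$ vanishes since $s^S(u_0)$ minimizes $s\mapsto M^S(su_0)$, i.e.\ $\frac{d}{ds}M^S(su_0)\big|_{s=s^S(u_0)}=0$, and what is left is $D\mu^S(u_0)=s^S(u_0)\,DM^S(s^S(u_0)u_0)=DM^S(u_0)$, using $s^S(u_0)=1$ just proved. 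Hence $DM^S(u_0)=0$, so $D\lambda^S_{\mu_0}(u_0)=0$, and an application of Lemma~\ref{CritR} finishes the proof. The one step I expect to be delicate is the identification $t^1_{\mu_0}(u_0)=s^S(u_0)$ at the threshold value $\mu_0=\mu^S(u_0)$ --- in particular, checking that the smaller fibering critical point $t^0_{\mu_0}(u_0)$ genuinely exists and lies strictly below $s^S(u_0)$ --- for which the monotonicity of $s\mapsto M^S(su_0)$ is the essential ingredient; everything else is routine differentiation.
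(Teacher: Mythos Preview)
Your argument is correct. The paper's own proof is the single line ``the proof follows by direct calculations of derivative $D\mu^S(u)$,'' i.e.\ a brute-force expansion of $D\mu^S(u_0)=0$ which, after imposing the normalizations $\sigma^S(u_0)=1$ and $t^1_{\mu_0}(u_0)=1$, collapses directly to the Euler--Lagrange equation $DS_{0,\mu_0}(u_0)=0$. Your route is genuinely different and more structural: you exploit the factorization $\lambda^S_\mu(u)=\frac{2A(u)}{pQ(u)}(M^S(u)-\mu)$ to translate the hypotheses into the three inputs of Lemma~\ref{CritR}, and you replace the raw differentiation of $\mu^S$ by an envelope identity $D\mu^S(u_0)=s^S(u_0)\,DM^S(s^S(u_0)u_0)$. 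What this buys you is a transparent explanation of \emph{why} the zero-frequency equation falls out --- it is precisely because at the threshold $\mu_0=\mu^S(u_0)$ the $\lambda$-fiber touches zero at its minimum --- whereas the paper's computation hides this mechanism. The one place you flagged as delicate, the identification $t^1_{\mu_0}(u_0)=s^S(u_0)$, is indeed fine: the strict unimodality of $s\mapsto M^S(su_0)$ forces $\lambda^S_{\mu_0}(tu_0)>0$ for every $t\neq s^S(u_0)$, so $s^S(u_0)$ is the unique global minimizer (value $0$) and hence the second fibering critical point; the derivative near $t=0^+$ is governed by $(q-2)\frac{2}{q}t^{q-3}B(u_0)>0$, confirming the preliminary local maximum $t^0_{\mu_0}(u_0)\in(0,s^S(u_0))$.
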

\begin{proof}
	The proof follows by direct calculations of the derivative $D\mu^S(u)$.
\end{proof}
Observe if $2<q<p<2^*$, then by the Gagliardo–Nirenberg interpolation inequality we have
	\begin{align}\label{GN}
		\int |u|^{p} \leq C_{gn}(\int |\nabla u|^{2})^{\frac{2^*(p-q)}{2(2^*-q)} }&(\int |u|^{q})^\frac{2^*-p}{2^*-q}~\Leftrightarrow \\
		&A(u)\leq C_{gn}(T(u))^{\frac{2^*(p-q)}{2(2^*-q)} }(B(u))^\frac{2^*-p}{2^*-q},\nonumber
	\end{align}
where a constant $C_{gn}$ does not depend on $u \in \mathcal{D}$. Thus, $\mu^S(u)$ can be extended to the space $\mathcal{D}\setminus 0$.

Consider the principal critical value \eqref{eqRayM}, i.e., 
\begin{equation*}\label{eqRay0}
	\hat{\mu}^S=\inf_{ u \in \mathcal{D}\setminus 0} \mu^S(u)\equiv  C_{p,q,N,S}\inf_{ u \in \mathcal{D}\setminus 0} \frac{B^\frac{2^*-p}{2^*-q}(u)T^{\frac{2^*(p-q)}{2(2^*-q)}}(u)}{A(u)}.
\end{equation*}
Notice that by \eqref{MSu} we have 
\begin{equation}\label{MsuMin}
	\hat{\mu}^S=\inf_{ u \in \mathcal{D}\setminus 0}M^S(u).
\end{equation}
Moreover, \eqref{GN} implies 
$$
\hat{\mu}^S>0.
$$
\begin{proposition}\label{Rayl0}
Assume $2<q<p<2^*$.   
\begin{description}
	\item[{\rm (i)}] If $0<\mu\leq  \hat{\mu}^S$, then   $\lambda^S_\mu(t^i_\mu(u)u)\geq 0$, $\forall u \in H^1\setminus 0$, $i=0,1$. 
		\item[{\rm (ii)}] If $\mu> \hat{\mu}^S$, then there exists $u \in H^1\setminus 0$ such that the function  $\lambda^S_\mu(tu )$ has two distinct nonzero critical points $t^0_\mu(u), t^1_\mu(u)$, $0<t^0_\mu(u)< t^1_\mu(u)<+\infty$. Moreover, $\lambda^S_\mu(t^0_\mu(u)u)>0$, $\lambda^S_\mu(t^1_\mu(u)u)<0$. 
\end{description}

	\end{proposition}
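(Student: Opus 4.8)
The plan is to reduce both assertions to elementary one–variable analysis of the fibering function $t\mapsto\lambda^S_\mu(tu)$, resting on the pointwise identity
$$
\lambda^S_\mu(v)=\frac{2A(v)}{pQ(v)}\,(M^S(v)-\mu),\qquad v\in H^1\setminus 0,
$$
which is immediate from the definitions of $\lambda^S_\mu$ and $M^S$ and refines the equivalence $M^S(v)=\mu\Leftrightarrow\lambda^S_\mu(v)=0$ noted above. Since $A(v),Q(v)>0$, the sign of $\lambda^S_\mu(v)$ equals the sign of $M^S(v)-\mu$; taking $v=tu$ and recalling that $s\mapsto M^S(su)$ attains its global minimum $\mu^S(u)$, this yields the two facts I will use: $\lambda^S_\mu(tu)\ge 0$ for all $t>0$ whenever $\mu\le\mu^S(u)$, and $\lambda^S_\mu(t_0 u)<0$ for some $t_0>0$ whenever $\mu>\mu^S(u)$.

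For part $(i)$ I would note that $H^1\setminus 0\subset\mathcal D\setminus 0$, so $\mu^S(u)\ge\inf_{\mathcal D\setminus 0}\mu^S=\hat\mu^S$ for every $u\in H^1\setminus 0$. Hence, when $0<\mu\le\hat\mu^S$, one has $\mu\le\hat\mu^S\le\mu^S(u)$ for all $u$, and the first fact above gives $\lambda^S_\mu(tu)\ge 0$ for all $t>0$, in particular at the fibering critical points $t^i_\mu(u)$ whenever these exist. No case distinction or compactness enters.

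For part $(ii)$, assume $\mu>\hat\mu^S$. I would first pick $u\in H^1\setminus 0$ with $\mu^S(u)<\mu$: this is possible because $\mu^S$ is continuous on $\mathcal D\setminus 0$ (by \eqref{GN}) and $C_c^\infty\setminus 0\subset H^1\setminus 0$ is dense in $\mathcal D\setminus 0$, so that $\inf_{H^1\setminus 0}\mu^S=\hat\mu^S<\mu$. For such a $u$ the second fact above gives $\lambda^S_\mu(t_0 u)<0$ for some $t_0>0$. On the other hand, from
$$
Q(u)\lambda^S_\mu(tu)=c_N^S\,T^{N/(N-2)}(u)\,t^{4/(N-2)}-\mu\frac{2}{p}A(u)\,t^{p-2}+\frac{2}{q}B(u)\,t^{q-2},
$$
with $0<q-2<p-2<4/(N-2)$ and all three coefficients positive, $\lambda^S_\mu(tu)>0$ for small $t>0$ with $\lambda^S_\mu(tu)\to 0$ as $t\to 0^+$, while $\lambda^S_\mu(tu)\to+\infty$ as $t\to+\infty$; moreover a direct differentiation gives $\frac{d}{dt}(Q(u)\lambda^S_\mu(tu))=t^{q-3}g(t)$, where $g$ is strictly decreasing then strictly increasing on $(0,+\infty)$ with $g(0^+)>0$ and $g(+\infty)=+\infty$. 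Since $t\mapsto Q(u)\lambda^S_\mu(tu)$ has limit $0$ at $0^+$ but takes a negative value, it is not nondecreasing, so $g$ must be negative somewhere and therefore has exactly two positive zeros $t^0_\mu(u)<t^1_\mu(u)$; consequently $t\mapsto Q(u)\lambda^S_\mu(tu)$ increases on $(0,t^0_\mu(u))$, decreases on $(t^0_\mu(u),t^1_\mu(u))$ and increases on $(t^1_\mu(u),+\infty)$, and $t^0_\mu(u),t^1_\mu(u)$ are precisely the two fibering critical points (in agreement with the ``at most two'' statement preceding Corollary~\ref{corNZ0}). Finally, $\lambda^S_\mu(t^0_\mu(u)u)>0$ since the function is positive and increasing on $(0,t^0_\mu(u))$, and, as $t_0>t^0_\mu(u)$ (because $\lambda^S_\mu(tu)>0$ on $(0,t^0_\mu(u)]$), one has $\lambda^S_\mu(t^1_\mu(u)u)=\min_{t\ge t^0_\mu(u)}\lambda^S_\mu(tu)\le\lambda^S_\mu(t_0 u)<0$.

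I expect the only point needing genuine care to be the density identity $\inf_{H^1\setminus 0}\mu^S=\inf_{\mathcal D\setminus 0}\mu^S$ used at the start of $(ii)$; it is standard, but relies on approximating an element of $\mathcal D=\mathcal D^{1,2}\cap L^q$ by $C_c^\infty$ functions converging simultaneously in $\mathcal D^{1,2}$ and in $L^q$. All the rest is the elementary calculus of the fibering function combined with the properties of $M^S$ and $\mu^S$ already established above.
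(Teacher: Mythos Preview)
Your proof is correct and follows essentially the same strategy as the paper: both parts rest on the sign identity linking $\lambda^S_\mu(v)$ to $M^S(v)-\mu$ and on the fact that $\mu^S(u)=\min_{s>0}M^S(su)$. For (ii) the paper argues slightly differently---it first locates the two \emph{zeros} $s^0<s^1$ of $s\mapsto\lambda^S_\mu(su)$ via $M^S(su)=\mu$ and then reads off the two critical points $t^0_\mu(u)\in(0,s^0)$ and $t^1_\mu(u)\in(s^0,s^1)$ from the sign pattern, whereas you analyze the derivative $g(t)$ directly; both are equally elementary. Your explicit density step $\inf_{H^1\setminus 0}\mu^S=\inf_{\mathcal D\setminus 0}\mu^S$ is a point the paper glosses over (it simply picks $u\in\mathcal D\setminus 0$, although $\lambda^S_\mu$ requires $Q(u)<\infty$), so your version is in fact the more careful of the two.
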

	\begin{proof} The proof of (i) follows immediately from the definition of $\hat{\mu}^S$ in \eqref{eqRayM}. 
	
	Let us prove (ii). Assume that $\mu>\hat{\mu}^S$. Then from \eqref{eqRayM} it follows that there exists $u \in \mathcal{D}\setminus 0$ such that  $\hat{\mu}^S<\mu^S(u)<\mu$. 	Since $\mu^S(u)$ is a global minimum value of the function $M^S(su)$ and $M^S(su) \to +\infty$ as $s\downarrow 0$ and $s \to +\infty$, we infer that the equation $M^S(su)=\mu$ has  two distinct solutions $s^0(u)< s^1_\mu(u)$. Hence  $\lambda^S_\mu(s^0_\mu(u)u )=\lambda^S_\mu(s^1_\mu(u) u )=0$. Since $\lambda^S_\mu(s u )<0$ for $s \in (s^0_\mu(u), s^1_\mu(u))$,  $\lambda^S_\mu(s u )$ attains its minimum value  at a point $t^1_\mu(u)$ belonging to the interval $(s^0_\mu(u), s^1_\mu(u))$, whereas its local maximum value point $t^0_\mu(u)$ belongs to $(0,s^0_\mu(u))$.  
	\end{proof} 
		
From this we have
\begin{corollary}\label{corNZ} Assume that $2<q<p<2^*$.
	
	\item (i)~  If   $\mu> \hat{\mu}^S$, then  $\hat{\lambda}_\mu^S = \inf_{u \in H^1\setminus 0}\lambda^S_\mu(u )<0$.
		\item (ii)~If  $\mu= \hat{\mu}^S$, then $ \hat{\lambda}_{\hat{\mu}^S}^S=\inf_{u \in H^1\setminus 0}\lambda^S_{\hat{\mu}^S}(u)\geq 0$,
	\item (iii)~If  $\mu< \hat{\mu}^S$, then $\hat{\lambda}_{\mu}^S=\inf_{u \in H^1\setminus 0}\lambda^S_\mu(u )> 0$.
\end{corollary}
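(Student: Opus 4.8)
The plan is to read off the three cases directly from Proposition~\ref{Rayl0} together with the elementary analysis of the fibering map $t\mapsto\lambda^S_\mu(tu)$ carried out just before it. The basic observation is the behavior of that map at the two ends of $(0,+\infty)$: for $2<q<p<2^*$ its expansion involves the powers $t^{4/(N-2)}$, $t^{p-2}$, $t^{q-2}$ with $q-2<p-2<4/(N-2)$, the smallest exponent carrying the positive coefficient $\frac{2}{q}B(u)$ and the largest carrying the positive coefficient $c_N^S\,T^{N/(N-2)}(u)$; hence $\lambda^S_\mu(tu)\to0^{+}$ as $t\downarrow0$ and $\lambda^S_\mu(tu)\to+\infty$ as $t\to+\infty$, for every $u\in H^1\setminus 0$. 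Since $H^1\setminus 0=\{tu:t>0,\ u\in H^1\setminus 0\}$ and $\lambda^S_\mu$ is $0$-homogeneous only under $u\mapsto u_\sigma$, we may write $\hat\lambda^S_\mu=\inf_{u\in H^1\setminus 0}\inf_{t>0}\lambda^S_\mu(tu)$, so that on each ray the infimum in $t$ is governed by the fibering critical values $\lambda^S_\mu(t^0_\mu(u)u)$, $\lambda^S_\mu(t^1_\mu(u)u)$ of Proposition~\ref{Rayl0} (when they exist) together with the boundary value $0$.

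For (i): if $\mu>\hat\mu^S$, Proposition~\ref{Rayl0}(ii) furnishes a single $u\in H^1\setminus 0$ whose fibering map has two critical points with $\lambda^S_\mu(t^1_\mu(u)u)<0$, so $\hat\lambda^S_\mu\le\lambda^S_\mu(t^1_\mu(u)u)<0$ (that $\hat\lambda^S_\mu>-\infty$ in this case is established, via \eqref{GN}, in the proof of Theorem~\ref{thm1}$(2^o)$). For (ii) and (iii): let $0<\mu\le\hat\mu^S$. Then for every $u\ne0$ and every $s>0$,
\[
M^S(su)\ \ge\ \mu^S(u)\ \ge\ \hat\mu^S\ \ge\ \mu ,
\]
and since $\lambda^S_\mu(su)=0$ precisely when $M^S(su)=\mu$, the fibering map is nowhere negative; it can vanish only at the single double zero $s^S(u)$, which is possible only if $\mu=\hat\mu^S$ and $u$ minimizes $\mu^S$. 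Feeding this into the reduction of the previous paragraph yields $\hat\lambda^S_{\hat\mu^S}\ge0$, which is (ii). If moreover $\mu<\hat\mu^S$, then the inequality $\mu^S(u)\ge\hat\mu^S>\mu$ is strict for every $u$, so $M^S(su)=\mu$ has no solution, the fibering map $t\mapsto\lambda^S_\mu(tu)$ is strictly positive on $(0,+\infty)$ for every $u$, and hence $\lambda^S_\mu(u)>0$ for all $u\in H^1\setminus 0$; this is (iii).

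I expect the main obstacle to be the strictness asserted in (iii). The pointwise bound $\lambda^S_\mu(u)>0$ is immediate from the strict gap $\mu<\hat\mu^S\le\mu^S(u)$, but promoting this to a statement about $\hat\lambda^S_\mu$ itself requires controlling $\lambda^S_\mu$ along the degenerating dilation $u\mapsto tu$, $t\downarrow0$, where the quotient tends to $0$; this is where the quantitative separation of $\mu$ from $\hat\mu^S$---encoded through the Gagliardo--Nirenberg inequality \eqref{GN}---must be used, and it is exactly what distinguishes the strict case (iii) from the borderline case (ii). Everything else is a routine transcription of Proposition~\ref{Rayl0} and the fibering picture.
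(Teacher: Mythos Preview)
Your handling of (i) and (ii) is correct and is essentially the paper's argument: the paper cites Proposition~\ref{Rayl0} for (i) and, for (ii), argues by contradiction that a negative value of $\lambda^S_{\hat\mu^S}$ would force $M^S(u)<\hat\mu^S$ and hence $\mu^S(u)<\hat\mu^S$---the same chain of inequalities you wrote down directly.

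For (iii), the difficulty you flag is real, and in fact it cannot be overcome: the strict inequality $\hat\lambda^S_\mu>0$ is false as stated. Your own fibering analysis already proves this: for every fixed $u\in H^1\setminus 0$ you observed that $\lambda^S_\mu(tu)\to 0^{+}$ as $t\downarrow 0$, so $\hat\lambda^S_\mu=\inf_{v\ne 0}\lambda^S_\mu(v)\le 0$. Together with the pointwise bound $\lambda^S_\mu(u)>0$ that you correctly derived from $\mu<\hat\mu^S\le\mu^S(u)$, this yields $\hat\lambda^S_\mu=0$, with the infimum not attained. No appeal to the Gagliardo--Nirenberg inequality \eqref{GN} can produce a uniform positive lower bound, precisely because the ray $t\mapsto tu$ drives the quotient to zero. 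The paper's proof (``follows directly from Proposition~\ref{Rayl0}'') does not confront this point.

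What is actually true, and all that is used downstream (in the proof of Theorem~\ref{thm1}\,$(3^o)$ and in Corollary~\ref{lemMinimCor}\,(ii)), is the pointwise strict positivity $\lambda^S_\mu(u)>0$ for every $u\ne 0$; equivalently, \eqref{1S} admits no weak solution with $\lambda\le 0$ in this range. Your argument establishes exactly that, so the substance is fine---only the sharpness claim on the infimum in (iii) should be weakened to $\hat\lambda^S_\mu\ge 0$ (in fact $=0$).
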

\begin{proof}
	The proofs of (i), (iii) follow directly from Proposition \ref{Rayl0}. 
	Let us prove (ii).
	Suppose, contrary to our claim, that $\hat{\lambda}_{\hat{\mu}^S}^S<0$. Then by \eqref{RM2}, there exists $u \in H^1\setminus 0$ such that
	$\hat{\lambda}_{\hat{\mu}^S}^S<\lambda_{\hat{\mu}^S}^S(u)<0$. This implies $M^S(u)<\hat{\mu}^S$, and thus 
	$\mu^S(u)=\min_{s\geq 0}M^S(su)<\hat{\mu}^S$ which contradicts  the definition of $\hat{\mu}^S$.
\end{proof}

\section{Existence of a global minimizer of $\lambda^S_\mu(u )$}


Consider minimization problem \eqref{RM2}, i.e.,
\begin{align*}
	&\hat{\lambda}_\mu^S:=\min_{u \in H^1\setminus 0 }\lambda^S_\mu(u ). \label{RM2a}
\end{align*}

\begin{lemma}\label{lemMinim} Assume that $S>0$ and $2<p<q<2^*$, $\mu>0$, or  $2<q<p<2^*$, $\mu>\hat{\mu}^S$, then 
\begin{description}
	\item[\rm{(1)}]  $\hat{\lambda}_\mu^S<0$ and there exists a minimizer $\hat{u}_\mu^S$ of \eqref{RM2}, i.e., $\hat{\lambda}_\mu^S=\lambda^S_\mu(\hat{u}_\mu^S)$;
	\item[\rm{(2)}] $\hat{u}_\mu^S$ is a  fundamental frequency solution of \eqref{1S} with prescribe action $S$. Moreover, $\hat{u}^S_\mu>0$ in $\mathbb{R}^N$ and $\hat{u}_\mu \in C^{2}(\mathbb{R}^N)$.
\end{description}
\end{lemma}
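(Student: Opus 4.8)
Since Corollary~\ref{corNZ0} (case $2<p<q<2^*$) and Corollary~\ref{corNZ}(i) (case $2<q<p<2^*$, $\mu>\hat\mu^S$) already yield $\hat\lambda_\mu^S<0$, the real content of (1) is the attainment of the infimum in \eqref{RM2}, and I would prove it by direct minimization. Take a minimizing sequence $(u_n)\subset H^1\setminus0$. Replace each $u_n$ by its Schwarz symmetrization: since $T(u^*)\le T(u)$ while $Q,A,B$ are equimeasurable invariants, and since eventually $\lambda^S_\mu(u_n)<0$ forces the numerator of $\lambda^S_\mu(u_n)$ to be negative, symmetrization does not raise $\lambda^S_\mu$, so the new sequence is still minimizing and consists of nonnegative, radial, radially nonincreasing functions. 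Then, using the $0$-homogeneity of $\lambda^S_\mu$ under $u\mapsto u_\sigma$ (noted after \eqref{LCN}), rescale so that $T(u_n)=NS$ for all $n$; this normalizes the leading term: $c_N^S\,T^{N/(N-2)}(u_n)=(N-2)S$.

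Next comes boundedness and non-vanishing of the weak limit. From the identity
\[
\lambda^S_\mu(u_n)Q(u_n)=(N-2)S-\tfrac{2\mu}{p}A(u_n)+\tfrac{2}{q}B(u_n),
\]
the Gagliardo--Nirenberg inequality --- the standard one when $2<p<q<2^*$, and \eqref{GN} when $2<q<p<2^*$ --- gives (with $T(u_n)$ fixed) a bound of $A(u_n)$ by a sublinear power of $Q(u_n)$ that rules out $Q(u_n)\to\infty$; moreover $Q(u_n)\to0$ is impossible, since it would drive the right-hand side above to $(N-2)S>0$ while the left-hand side $\to0$. Hence $(u_n)$ is bounded in $H^1$; passing to a subsequence, $u_n\rightharpoonup\hat u$ in $H^1$ and $Q(u_n)\to L\in(0,\infty)$, while the compactness of the embedding of radial $H^1$-functions into $L^p$ and $L^q$ gives $A(u_n)\to A(\hat u)$, $B(u_n)\to B(\hat u)$. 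Letting $n\to\infty$ in the displayed identity yields $\tfrac{(N-2)S}{2}-\tfrac{\mu}{p}A(\hat u)+\tfrac1q B(\hat u)=\tfrac12\hat\lambda_\mu^S L$; since the right side is negative and $(N-2)S>0$, this forces $\hat u\neq0$.

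To conclude (1), I would use weak lower semicontinuity: $T(\hat u)\le NS$ and $Q(\hat u)\le L$, so from the limiting identity the numerator of $\lambda^S_\mu(\hat u)$ is $\le\tfrac12\hat\lambda_\mu^S L$, hence $\lambda^S_\mu(\hat u)\le\hat\lambda_\mu^S L/Q(\hat u)\le\hat\lambda_\mu^S$ (because $L\ge Q(\hat u)$ and $\hat\lambda_\mu^S<0$). By definition of $\hat\lambda_\mu^S$ equality holds throughout, so $\hat u=:\hat u_\mu^S$ attains the infimum; the collapse of the inequalities then forces $Q(\hat u_\mu^S)=L$ and $T(\hat u_\mu^S)=NS$, i.e. $\sigma^S(\hat u_\mu^S)=1$, and $u_n\to\hat u_\mu^S$ strongly in $H^1$. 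For (2): as a global minimizer $D\lambda^S_\mu(\hat u_\mu^S)=0$, and with $\lambda^S_\mu(\hat u_\mu^S)=\hat\lambda_\mu^S$ and $\sigma^S(\hat u_\mu^S)=1$, Lemma~\ref{CritR} shows $\hat u_\mu^S$ is a weak solution of \eqref{1S} with prescribed action $S$ and frequency $\hat\lambda_\mu^S<0$; minimality of $\hat\lambda_\mu^S$ over all critical points of $\lambda^S_\mu$, together with the scaling correspondence between critical points of $\lambda^S_\mu$ and of $\Lambda^S_\mu$, identifies $\hat u_\mu^S$ as a fundamental frequency solution. Finally, $\hat u_\mu^S\ge0$, $\hat u_\mu^S\not\equiv0$, and the strong maximum principle applied to $-\Delta\hat u_\mu^S+V\hat u_\mu^S=0$ with $V=-\hat\lambda_\mu^S-\mu(\hat u_\mu^S)^{p-2}+(\hat u_\mu^S)^{q-2}\in L^\infty_{\mathrm{loc}}$ gives $\hat u_\mu^S>0$ on $\mathbb{R}^N$, while interior $W^{2,r}$-estimates plus Sobolev embedding and then Schauder theory give $\hat u_\mu^S\in C^2(\mathbb{R}^N)$.

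The main obstacle is the lack of compactness on $\mathbb{R}^N$, handled as above by Schwarz symmetrization (which can only decrease $\lambda^S_\mu$), the compact radial embedding into $L^p$ and $L^q$ for the nonlinear terms, and weak lower semicontinuity for $T$ and $Q$; the normalization $T(u_n)=NS$ and the strict sign $\hat\lambda_\mu^S<0$ are precisely what prevent the weak limit from vanishing and force the semicontinuity inequalities to become equalities (hence strong convergence and $\sigma^S(\hat u_\mu^S)=1$). A second delicate point is the case $2<q<p<2^*$, where the needed a priori bound really relies on the Gagliardo--Nirenberg inequality \eqref{GN} built into the definition of $\hat\mu^S$ in \eqref{eqRayM}, and where $\mu>\hat\mu^S$ is exactly the threshold ensuring $\hat\lambda_\mu^S<0$.
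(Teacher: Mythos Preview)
Your argument is correct and complete, but it follows a genuinely different route from the paper's proof. The paper normalizes $Q(u_n)=1$ (via the $\sigma$-homogeneity), bounds $T(u_n)$ by the coercivity estimate \eqref{Eq1}, and then---working with a general (non-radial) minimizing sequence---invokes Lions' vanishing lemma (Lemma~\ref{LIONS}) to rule out vanishing after translation and the Brezis--Lieb splitting (Lemma~\ref{BL}) together with the superadditivity of $t\mapsto t^{N/(N-2)}$ to exclude dichotomy. You instead symmetrize first, normalize $T(u_n)=NS$, and exploit the compact embedding $H^1_{\mathrm{rad}}(\mathbb{R}^N)\hookrightarrow L^r(\mathbb{R}^N)$ for $r\in(2,2^*)$ to get $A(u_n)\to A(\hat u)$ and $B(u_n)\to B(\hat u)$ directly, after which weak lower semicontinuity of $T$ and $Q$ (combined with $\hat\lambda^S_\mu<0$) closes the argument without any concentration-compactness machinery.

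Your approach is shorter and has the bonus of producing a radial minimizer immediately (the paper only recovers this later, in Proposition~\ref{RadProp}, by symmetrizing the minimizer a posteriori). The paper's approach, on the other hand, is more portable: it would survive the addition of a non-radial potential or a domain lacking rotational symmetry, where Schwarz rearrangement is unavailable. Two minor remarks on your write-up: the observation that symmetrization does not raise $\lambda^S_\mu$ holds regardless of the sign of the numerator (decreasing $T$ with $Q,A,B$ fixed always decreases the quotient), so the aside about the negative numerator is unnecessary; and in the boundedness step for $2<q<p<2^*$, the inequality \eqref{GN} you cite controls $A$ by $T$ and $B$ rather than by $Q$---the standard Gagliardo--Nirenberg interpolation between $L^2$ and $\mathcal{D}^{1,2}$ works uniformly in both cases and gives the sublinear-in-$Q$ bound you actually use.
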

\begin{proof} We will prove the assertions for the both cases: $2<p<q<2^*$, $\mu>0$, and  $2<q<p<2^*$, $\mu\geq \hat{\mu}^S$,  in parallel. 

Under the assumptions of the lemma, Corollaries  \ref{corNZ0}, \ref{corNZ} imply that $\hat{\lambda}_\mu^S<0$.  
Consider a minimizing sequence $(u_n)$ of \eqref{RM2}, i.e., $\lambda^S_\mu(u_n) \to \hat{\lambda}_\mu^S$ as $n\to+\infty$. 
Let us show that $(u_n)$ is bounded in $H^1$. 
In view of that 
 functional $\lambda^S_\mu(u)$ is $0$-homogenous,  we may assume that $\|u_n\|_{L^2}=1$, $n=1,2,\ldots $. Suppose that $\|\nabla u_n\|_{L^2} \to +\infty$. By the H\"older and Sobolev inequalities we have
\begin{equation}\label{HSineq}
	\int |u|^p\leq C \|u\|_{L^2}^\kappa\|\nabla u\|_{L^2}^\frac{2^*(2-\kappa)}{2}=C \|\nabla u\|_{L^2}^\frac{2^*(2-\kappa)}{2}, u\in H^1,
\end{equation}
where $\kappa=\frac{2(2^*-p)}{2^*-2}$, and $0<C<+\infty$ does not depend on $u \in H^1$. 
 Hence and since  $2^*>\frac{2^*(2-\kappa)}{2}$, we get 
\begin{align}\label{Eq1}
		\lambda^S_\mu(u_n )\geq  &\frac{c_N^S}{2}\|\nabla u_n\|_{L^2}^{2^*}-\mu \frac{1}{p}\int |u_n|^p\geq \\  &\frac{c_N^S}{2}\|\nabla u_n\|_{L^2}^{2^*}-\mu C \frac{1}{p}\|\nabla u_n\|_{L^2}^\frac{2^*(2-\kappa)}{2}\to +\infty, ~~\mbox{as}~~\|\nabla u_n\|_{L^2} \to +\infty, \nonumber
	\end{align}
	which is a contradiction.
Thus, $(u_n)$ is bounded in $H^1$, and therefore   by the Sobolev inequality, the norms $\|u_n\|_{L^p}$ and $\|u_n\|_{L^q}$ are also bounded. This implies by the Banach–Alaoglu and Sobolev embedding theorems that there exists a subsequence, which we again denote by $(u_n)$, such that
\begin{align*}
	&u_n \rightharpoonup \hat{u}_\mu^S ~~\mbox{in} ~ H^1(\mathbb{R}^N),\\
	&u_n \to \hat{u}_\mu^S  ~\mbox{in} ~~ L^\gamma_{loc}(\mathbb{R}^N),~~1\leq \gamma<2^*,\\
	&u_n \to \hat{u}_\mu^S ~~\mbox{a.e. on} ~\mathbb{R}^N,
\end{align*}
for some $\hat{u}_\mu^S \in H^1$.
Let us show that $\hat{u}_\mu^S\neq 0$.  Observe that the sequence $\|u_n\|_{L^p}^p\equiv A(u_n)$ is separated from zero. Indeed, if  $A(u_n) \to 0$ as $n\to +\infty$, then 
$$
\hat{\lambda}_\mu^S=\lim_{n\to +\infty}\frac{2}{Q(u_n)}\left(\frac{c_N^S}{2}\|\nabla u_n\|_{L^2}^{2^*}-\mu \frac{1}{p}\int |u_n|^p+\frac{1}{q}\int |u_n|^q\right)\geq 0.
$$
However, $\hat{\lambda}_\mu^S<0$, and we thus  get a contradiction.

We need the following P.-L. Lions lemma (see Lemma I.1 p.231, in \cite{LIONS})
\begin{lemma}\label{LIONS}
Let $r > 0$ and $1\leq \gamma < 2^*$. Assume $(u_n)$ is a bounded in $L^\gamma(\mathbb{R}^N)$, $|\nabla u_n|$ is bounded in $L^2(\mathbb{R}^N)$ and  
$$
\sup_{y\in \mathbb{R}^N}\int_{B(y;r)} |u_n|^\gamma\to 0,~n\to \infty,
$$
then $u_n\to 0$ in $L^l(\mathbb{R}^N)$ for any $l\in (\gamma, 2^*)$.
\end{lemma}
Let $r > 0$. Observe that 
$$
\delta:= \liminf_{n\to \infty}\sup_{y\in \mathbb{R}^N}\int_{B(y;r)} |u_n|^\gamma>0,
$$
for $1\leq \gamma<p$. 
Indeed, if this is not true, then by Lemma \ref{LIONS},  $u_n\to 0$ in $L^p(\mathbb{R}^N)$. But by the above this is impossible. 
Thus, we infer, choosing a subsequence if necessary, that there exists $(y_n) \subset \mathbb{R}^N$ such that there holds
$\int_{B(y_n;r)} |u_n|^\gamma>\delta/2$, $\forall n=1,\ldots$.  
Hence we may assume, redefining  $u_n:=u_n(\cdot+y_n)$ if necessary, that 
\begin{equation*}\label{Lio}
	\int_{B(0;r)} |u_n|^\gamma>\delta/2, ~n=1,\ldots, ~~1\leq \gamma<p,
\end{equation*}
and thus indeed, $\hat{u}_\mu^S\neq 0$.

 Recall the Brezis-Lieb lemma (see \cite{LIONS})
\begin{lemma}\label{BL}
Assume $(u_n)$ is bounded in $L^\gamma(\mathbb{R}^N)$, $1\leq \gamma<+\infty$
	and $u_n \to u$ a. e. on $\mathbb{R}^N$, then
	\begin{equation}
	\lim_{n\to +\infty}\|u_n\|_{L^\gamma}^\gamma= \|u\|_{L^\gamma}^\gamma+\lim_{n\to +\infty}\|u_n-u\|_{L^\gamma}^\gamma.
	\end{equation}
\end{lemma}
From this we have
\begin{align*}
	&\lim_{n\to +\infty}\|\nabla u_n\|^2_{L^2}=\|\nabla \hat{u}_\mu^S\|^2_{L^2}+\lim_{n\to +\infty}\|\nabla (u_n-\hat{u}_\mu^S)\|^2_{L^2},\label{Will1}\\
	&\lim_{n\to +\infty}\|u_n\|_{L^p}^p=\|\hat{u}_\mu^S\|_{L^p}^p+\lim_{n\to +\infty}\|u_n-\hat{u}_\mu^S\|_{L^p}^p,\\
	&\lim_{n\to +\infty}\|u_n\|_{L^q}^q=\|\hat{u}_\mu^S\|_{L^q}^q+\lim_{n\to +\infty}\|u_n-\hat{u}_\mu^S\|_{L^q}^q,\\
	& \bar{Q}:=\lim_{n\to +\infty}\|u_n\|^2_{L^2}=\|\hat{u}_\mu^S\|^2_{L^2}+\lim_{n\to +\infty}\|u_n-\hat{u}_\mu^S\|^2.
\end{align*}
Hence and since 
$$(\|\nabla\hat{u}_\mu^S\|^2_{L^2}+\|\nabla (u_n-\hat{u}_\mu^S)\|^2_{L^2})^\frac{N}{(N-2)}\geq \|\nabla\hat{u}_\mu^S\|_{L^2}^{2^*}+\|\nabla(u_n-\hat{u}_\mu^S)\|_{L^2}^{2^*},
$$
we have
\begin{align*}
	&\hat{\lambda}_\mu^S=\lim_{n\to +\infty}\lambda^S_\mu(u_n)\geq \frac{c_N^S\|\nabla \hat{u}_\mu^S\|_{L^2}^{2^*}-\mu \frac{2}{p}\|\hat{u}_\mu^S\|_{L^p}^p+\frac{2}{q}\|\hat{u}_\mu^S\|_{L^q}^q}{\bar{Q}}+\\
	&\frac{c_N^S\lim_{n\to +\infty}\|\nabla (u_n-\hat{u}_\mu^S)\|_{L^2}^{2^*}-\mu \frac{2}{p}\lim_{n\to +\infty}\|u_n-\hat{u}_\mu^S\|_{L^p}^p+\frac{2}{q}\lim_{n\to +\infty}\|u_n-\hat{u}_\mu^S\|_{L^q}^q}{\bar{Q}}.
\end{align*}
In view of that $\hat{\lambda}_\mu^S\neq 0$, this implies
\begin{align*}
	\hat{\lambda}_\mu^S &\geq \frac{1}{\bar{Q}}\hat{\lambda}_\mu^S\cdot \|\hat{u}_\mu^S\|_{L^2}^2 +\\
	&\frac{1}{\bar{Q}}\lim_{n\to \infty}\left(c_N^S\|\nabla (u_n-\hat{u}_\mu^S)\|_{L^2}^{2^*}-\mu \frac{2}{p}\|u_n-\hat{u}_\mu^S\|_{L^p}^p+\frac{2}{q}\|u_n-\hat{u}_\mu^S\|_{L^q}^q\right) \geq  \nonumber \\
& \frac{1}{\bar{Q}}(\hat{\lambda}_\mu^S\cdot \|\hat{u}_\mu^S\|^2_{L^2}+ \hat{\lambda}_\mu^S \lim_{n\to \infty} \|u_n-\hat{u}_\mu^S\|^2_{L^2})=\hat{\lambda}_\mu^S\frac{ \lim_{n\to +\infty}\|u_n\|^2_{L^2}}{\bar{Q}}=\hat{\lambda}_\mu^S.  
\end{align*}
However this is possible only if the equality holds. Consequently, 
$$
\lim_{n\to \infty}\left(c_N^S\|\nabla (u_n-\hat{u}_\mu^S)\|_{L^2}^{2^*}+\|u_n-\hat{u}_\mu^S\|_{L^p}^p+\|u_n-\hat{u}_\mu^S\|_{L^q}^q\right)=0.
$$
Hence it follows easily that   $u_n \to \hat{u}_\mu^S$ strongly in $H^1$ and $\hat{u}_\mu^S$ is a minimizer of \eqref{RM2}.

Due to the homogeneity of $\lambda_\mu^S(u)$ we may assume that $\sigma^S( \hat{u}_\mu^S)=1$. Hence by Lemma \ref{CritR}, we have $DS_{\lambda,\mu}(\hat{u}_\mu^S)=0$, $S_{\lambda,\mu}(\hat{u}_\mu^S)=S$, where   $\lambda= \hat{\lambda}_\mu^S$. Since $\lambda_\mu^S(|u|)=\lambda_\mu^S(u)$ for $u\in H^1\setminus 0$, we may assume that $ \hat{u}_\mu^S\geq 0$ in $\mathbb{R}^N$.

Since $\hat{u}_\mu \in H^1(\mathbb{R}^N)$, the Brezis \& Kato Theorem \cite{BK} and the $L^\gamma$ estimates for the elliptic problems  \cite{ADN} yield that $\hat{u}_\mu^S \in W^{2,\gamma}_{loc}(\mathbb{R}^N)$, for any $\gamma \in (1,+\infty)$, and whence by the regularity theory of the solutions of the elliptic problems $\hat{u}_\mu \in C^{2}(\mathbb{R}^N)$ (see, e.g., \cite{ADN}). Hence by the Harnack inequality \cite{trudin}, we get that $\hat{u}_\mu^S> 0$ in $\mathbb{R}^N$.
\end{proof}

\begin{proposition} \label{PropLIM}
Let $S>0$.
	\par
		 $(1^o)$  Assume that $2<p<q<2^*$, then 	 $\hat{\lambda}_\mu^{S}\to 0$ as $\mu \to 0$.
\par
$(2^o)$ Assume that $2<q<p<2^*$, then  $\hat{\lambda}_\mu^{S}\to 0$ as $\mu \downarrow \hat{\mu}^S$.
	\end{proposition}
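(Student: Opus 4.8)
The plan is to establish the two limits $(1^o)$ and $(2^o)$ by combining an upper bound obtained from a fixed test function with a lower bound extracted from the minimizers $\hat{u}_\mu^S$ themselves. For the upper bound, fix any $w\in H^1\setminus 0$ with $\sigma^S(w)=1$. Since $\lambda_\mu^S(w)$ depends continuously on $\mu$ and, in case $(1^o)$, $\lambda_0^S(w)=\frac{2}{Q(w)}\bigl(\tfrac{c_N^S}{2}T^{N/(N-2)}(w)+\tfrac1q B(w)\bigr)>0$ while $\lambda_\mu^S(tw)\to 0$ along the fibering minimum as $\mu\to 0$ (because the coefficient $-\mu\frac2p t^{p-2}A(w)$ of the negative term vanishes, forcing the minimizing $t=t(w)\to 0$), one gets $\limsup_{\mu\to 0}\hat{\lambda}_\mu^S\le 0$. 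In case $(2^o)$, the argument is analogous but uses the definition of $\hat{\mu}^S$: choosing $w\in\mathcal D\setminus 0$ with $\mu^S(w)$ close to $\hat{\mu}^S$, as $\mu\downarrow\hat{\mu}^S$ the two fibering zeros $s^0_\mu(w)<s^1_\mu(w)$ of Proposition \ref{Rayl0}(ii) coalesce, so the minimum value $\lambda_\mu^S(t^1_\mu(w)w)\uparrow 0$; hence again $\limsup_{\mu}\hat{\lambda}_\mu^S\le 0$ in the relevant limit. Since $\hat{\lambda}_\mu^S<0$ throughout (Corollaries \ref{corNZ0}, \ref{corNZ}), it remains to show $\liminf\hat{\lambda}_\mu^S\ge 0$.

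For the lower bound I would argue by contradiction: suppose there is a sequence $\mu_n\to 0$ (resp.\ $\mu_n\downarrow\hat{\mu}^S$) with $\hat{\lambda}_{\mu_n}^S\le -\varepsilon<0$ for all $n$. Let $\hat{u}_n:=\hat{u}_{\mu_n}^S$ be the corresponding minimizers from Lemma \ref{lemMinim}, normalized by $\|\hat{u}_n\|_{L^2}=1$ (using $0$-homogeneity). From $\lambda_{\mu_n}^S(\hat{u}_n)\le-\varepsilon$ and the fact that the terms $\tfrac{c_N^S}{2}T^{N/(N-2)}(\hat{u}_n)$ and $\tfrac1q B(\hat{u}_n)$ are nonnegative, one gets $\mu_n\frac2p A(\hat{u}_n)\ge \varepsilon + c_N^S T^{N/(N-2)}(\hat{u}_n)$, in particular $\mu_n A(\hat{u}_n)$ is bounded below away from zero and $c_N^S T^{N/(N-2)}(\hat{u}_n)\le \mu_n\frac2p A(\hat{u}_n)$. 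In case $(1^o)$, since $\mu_n\to 0$ this forces $A(\hat{u}_n)\to+\infty$; but the Hölder–Sobolev inequality \eqref{HSineq} gives $A(\hat{u}_n)\le C\,T(\hat{u}_n)^{2^*(2-\kappa)/4}$ with $\kappa=\frac{2(2^*-p)}{2^*-2}$, so $T(\hat{u}_n)\to+\infty$ as well, and then plugging back, $\lambda_{\mu_n}^S(\hat{u}_n)\ge \frac{c_N^S}{2}T(\hat{u}_n)^{2^*}-\mu_n C\frac1p T(\hat{u}_n)^{2^*(2-\kappa)/4}$; since $2^*>2^*(2-\kappa)/4$ and $\mu_n\to 0$, the right-hand side tends to $+\infty$, contradicting $\lambda_{\mu_n}^S(\hat{u}_n)\le-\varepsilon$. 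In case $(2^o)$, from $\lambda_{\mu_n}^S(\hat u_n)<0$ we get $M^S(\hat u_n)<\mu_n$, hence $\mu^S(\hat u_n)=\min_{s}M^S(s\hat u_n)\le M^S(\hat u_n)<\mu_n$; but then taking $n\to\infty$ gives $\hat\mu^S\le\liminf_n\mu^S(\hat u_n)\le\hat\mu^S$, so $\mu^S(\hat u_n)\to\hat\mu^S$, and one must exploit that $\lambda_{\mu_n}^S(\hat u_n)$ is bounded away from $0$ to derive a contradiction — this forces the fibering gap $s^1_{\mu_n}(\hat u_n)-s^0_{\mu_n}(\hat u_n)$ to stay bounded below, which via the explicit formula for $M^S(s\hat u_n)$ and $\mu^S(\hat u_n)\to\hat\mu^S$ means $\mu_n$ stays bounded away from $\hat\mu^S$, contradicting $\mu_n\downarrow\hat\mu^S$.

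The main obstacle I anticipate is case $(2^o)$: unlike case $(1^o)$, where the vanishing coefficient $\mu_n\to 0$ directly drives a coercivity blow-up, here $\mu_n\to\hat\mu^S>0$ and no term degenerates, so the contradiction must come from a quantitative comparison between the value $\lambda_{\mu_n}^S(\hat u_n)$ and the "distance'' $\mu_n-\hat\mu^S$. Concretely, I would prove a quantitative version of Proposition \ref{Rayl0}(ii): for $u$ normalized with $\sigma^S(u)=1$, the minimal fibering value $\lambda_\mu^S(t^1_\mu(u)u)$ is controlled in modulus by $C(\mu-M^S(u))_+$ uniformly on bounded sets (which themselves are guaranteed to be bounded by an argument as in \eqref{Eq1}), and since $M^S(\hat u_n)\ge\mu^S(\hat u_n)\to\hat\mu^S$ we would get $|\hat\lambda_{\mu_n}^S|=|\lambda_{\mu_n}^S(\hat u_n)|\le C(\mu_n-\hat\mu^S+o(1))\to 0$. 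Establishing that uniform modulus-of-continuity estimate, together with the a priori $H^1$-bound on the normalized minimizers $\hat u_n$ (which follows exactly as in the boundedness step of Lemma \ref{lemMinim}), is the technical heart of the proof; the rest is bookkeeping with the explicit Rayleigh-quotient formulas.
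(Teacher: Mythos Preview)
Your proposal is essentially correct, but it obscures a one-line observation that makes the argument much shorter, and this is exactly what the paper exploits. The functional $\lambda_\mu^S(u)$ is \emph{affine} in $\mu$: for any $u\in H^1\setminus 0$ and any $\mu,\mu'$,
\[
\lambda_\mu^S(u)-\lambda_{\mu'}^S(u)=-\frac{2(\mu-\mu')}{p\,Q(u)}A(u).
\]
Evaluating at $u=\hat u_\mu^S$ (normalized to $Q=1$) with $\mu'=\hat\mu^S$ in case $(2^o)$ (resp.\ $\mu'=0$ in case $(1^o)$) and using $\lambda_{\hat\mu^S}^S(\hat u_\mu^S)\ge \hat\lambda_{\hat\mu^S}^S\ge 0$ from Corollary~\ref{corNZ}(ii) (resp.\ $\lambda_0^S\ge 0$ trivially), one gets immediately
\[
0>\hat\lambda_\mu^S=\lambda_\mu^S(\hat u_\mu^S)\ge -\tfrac{2}{p}(\mu-\hat\mu^S)\,A(\hat u_\mu^S),
\]
and the only remaining task is the uniform $H^1$-bound on the normalized minimizers giving $A(\hat u_\mu^S)$ bounded --- exactly the step you already identified, via the argument of \eqref{Eq1}. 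This is the paper's proof in its entirety.

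What you call ``the technical heart'' --- the quantitative estimate $|\lambda_\mu^S(t^1_\mu(u)u)|\le C(\mu-M^S(u))_+$ --- is in fact this affine identity in disguise (note $\lambda_\mu^S(v)=\frac{2A(v)}{pQ(v)}(M^S(v)-\mu)$ directly from the definitions), so there is nothing to establish there. Your separate upper-bound arguments via fibering and test functions are also superfluous: $\hat\lambda_\mu^S<0$ is already given by Corollaries~\ref{corNZ0} and~\ref{corNZ}, so only the lower bound matters. Likewise your blow-up contradiction for $(1^o)$ works but is a detour; the sandwich above handles both cases uniformly in three lines.
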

		\begin{proof}  First we prove $(2^o)$. Fix $S>0$.
			By Lemma \ref{lemMinim}, for any $\mu>\hat{\mu}^S$ there exists a minimizer $\hat{u}_\mu^S$ of \eqref{RM2} and  ${\lambda}_\mu^{S}(\hat{u}^{S}_\mu)<0$. Due to the homogeneity of 
${\lambda}_\mu^{S}(u)$, we may assume that $Q(\hat{u}^{S}_\mu)=1$, $\forall \mu>\hat{\mu}^S$.
Then by \eqref{LCN},
$$
{\lambda}_\mu^{S}(\hat{u}^{S}_\mu)-{\lambda}_{\hat{\mu}^S}^{S}(\hat{u}^{S}_\mu)=-2(\mu-\hat{\mu}^S) \frac{1}{p}A(\hat{u}^{S}_\mu)
$$	
Analysis similar to that in the proof of Lemma \ref{lemMinim} shows that the set $(\hat{u}^{S}_\mu)$
for $\mu$ sufficiency close to $\hat{\mu}^S$ is bounded in $H^1$, and thus by \eqref{HSineq}, the sequence $A(\hat{u}^{S}_\mu)$ is also bounded. Hence, ${\lambda}_\mu^{S}(\hat{u}^{S}_\mu)- {\lambda}_{\hat{\mu}^S}^{S}(\hat{u}^{S}_\mu) \to 0$ as $\mu \downarrow \hat{\mu}^S$.	Notice that by Corollary \ref{corNZ}, ${\lambda}_{\hat{\mu}^S}^{S}(\hat{u}^{S}_\mu)\geq 0$, whereas   ${\lambda}_\mu^{S}(\hat{u}^{S}_\mu)<0$. This implies that $\hat{\lambda}_\mu^{S}\to 0$ as $\mu \downarrow \hat{\mu}^S$.

The proof of $(1^o)$  is similar. We only note that in this case, ${\lambda}_{\hat{\mu}^S}^{S}(u)$ is replaced by ${\lambda}_{0}^{S}(u):={\lambda}_{\mu}^{S}(u)|_{\mu=0}$. Notice that  by \eqref{LCN},  $\inf_{u\in H^1\setminus 0} {\lambda}_{0}^{S}(u)=0$.
	
	\end{proof}

\section{Existence of a solution of zero frequency problem in $\mathcal{D}$}

In this section, we prove the existence of a fundamental frequency solution of the zero frequency problem \eqref{1S}  using the minimization problem \eqref{eqRayM}.

 Denote $\beta:=\frac{2q(2^*-p)}{2^*(p-q)}$, $\rho:=\frac{2p(2^*-q)}{2^*(p-q)}$,
$$
\mu(u):=\frac{\|u\|_{L^q}^{\beta}\|\nabla u\|_{L^2}^2}{\|u\|_{L^p}^{\rho}(u)}\equiv (S^\frac{2(p-q)}{(2^*-q)(N-2)}\mu^S(u)/ c(p,q,N))^{\rho/p},~~ u \in H^1\setminus 0.
$$
Consider  
\begin{equation}\label{MUP}
	\bar{\mu}=\inf_{ u \in \mathcal{D}\setminus 0}\mu(u).
\end{equation}
Then
\begin{equation}
	\hat{\mu}^S:=  c(p,q,N)\frac{1}{S^\frac{2(p-q)}{(2^*-q)(N-2)}}\bar{\mu}^{p/\rho},~~\forall S>0.
\end{equation}

\begin{lemma}\label{ZeroLem}
	Let $2<q<p<2^*$. There exists a minimizer $\hat{u}^S_{\hat{\mu}^S} \in \mathcal{D}$ of \eqref{eqRayM} such that $\hat{u}^S_{\hat{\mu}^S}$ weakly satisfies to \eqref{1S} with $\lambda=0$, $\mu=\hat{\mu}^S$. Moreover,  $\hat{u}^S_\mu>0$ in $\mathbb{R}^N$, $\hat{u}_\mu \in C^{2}(\mathbb{R}^N)$ and $S_{0,\hat{\mu}^S}(\hat{u}^S_{\hat{\mu}^S})=S$.
	\end{lemma}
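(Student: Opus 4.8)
The plan is to run a concentration–compactness argument for the minimizing problem $\bar{\mu}=\inf_{u\in\mathcal{D}\setminus 0}\mu(u)$, exploiting the two scaling invariances of $\mu^S$ (and hence $\mu(u)$) recorded in \eqref{HOM}: $\mu(su)=\mu(u)$ and $\mu(u_\sigma)=\mu(u)$. First I would take a minimizing sequence $(u_n)\subset\mathcal{D}\setminus 0$, $\mu(u_n)\to\bar\mu$. Using the dilation $u\mapsto u_\sigma$ I normalize $\|\nabla u_n\|_{L^2}^2=1$, and using the multiplicative scaling $u\mapsto su$ I normalize $\|u_n\|_{L^q}^q=1$; then along the minimizing sequence $\|u_n\|_{L^p}^p$ is bounded and bounded away from $0$ (the latter because $\mu(u_n)\to\bar\mu<\infty$, the former by the Gagliardo–Nirenberg inequality \eqref{GN}). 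Thus $(u_n)$ is bounded in $\mathcal{D}=\mathcal{D}^{1,2}\cap L^q$. Passing to a subsequence, $u_n\rightharpoonup \hat u$ in $\mathcal{D}^{1,2}$ and in $L^q$, and $u_n\to\hat u$ a.e.

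The key step is to upgrade weak convergence to the nonvanishing of $\hat u$. As in the proof of Lemma \ref{lemMinim}, I would apply the P.-L. Lions lemma (Lemma \ref{LIONS}) with $\gamma=q$: if $\sup_{y}\int_{B(y;r)}|u_n|^q\to 0$ then $u_n\to 0$ in $L^p(\mathbb{R}^N)$ (since $q<p<2^*$), contradicting $\|u_n\|_{L^p}^p\geq c>0$. Hence, after translating $u_n\mapsto u_n(\cdot+y_n)$ (which leaves $\mu$ invariant), $\int_{B(0;r)}|u_n|^q\geq\delta/2>0$, and local compactness of the embedding $\mathcal{D}^{1,2}\hookrightarrow L^q_{loc}$ forces $\hat u\neq 0$. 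Then I would use the Brezis–Lieb lemma (Lemma \ref{BL}) for the three functionals $T$, $A$, $B$ together with weak lower semicontinuity of $\|\nabla\cdot\|_{L^2}^2$; writing $v_n:=u_n-\hat u$, the splitting gives $1=\|\nabla\hat u\|^2+\lim\|\nabla v_n\|^2$, $\|\hat u\|_{L^q}^q+\lim\|v_n\|_{L^q}^q=1$, and $\|\hat u\|_{L^p}^p+\lim\|v_n\|_{L^p}^p=\lim\|u_n\|_{L^p}^p$. A convexity/superadditivity estimate of the type used in Lemma \ref{lemMinim} — combined with the fact that $\mu(w)\geq\bar\mu$ for every $w\neq 0$, applied to both $\hat u$ and (if nonzero) to the "tail" — then yields $\bar\mu\geq$ a convex combination that is $\geq\bar\mu$, so equality holds throughout; this forces $v_n\to 0$ strongly in $\mathcal{D}$, i.e. $u_n\to\hat u$ in $\mathcal{D}$, and $\hat u$ is a minimizer. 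The main obstacle is precisely the bookkeeping in this dichotomy step: because $\mu(u)$ is a quotient of products of powers rather than a sum, one has to argue the "no splitting" alternative carefully — either rule out the tail directly, or show a strict subadditivity inequality $\mu(\text{sum})>\min$ unless one piece vanishes. I expect the cleanest route is to transfer the problem, via the monotone rescaling $s\mapsto su$, back to the additive functional $\hat\lambda^{\,\tilde S}_\mu$ of Section 3 for an appropriate auxiliary $\tilde S$ and $\mu$ close to $\hat\mu^S$, and invoke Lemma \ref{lemMinim} and Proposition \ref{PropLIM}; alternatively, one argues the splitting estimate on $\mu$ directly.

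Once $\hat u=:\hat u^S_{\hat\mu^S}$ is a minimizer of \eqref{eqRayM}, the Euler–Lagrange equation follows from $D\mu^S(\hat u)=0$: using the two homogeneities I normalize so that $\sigma^S(\hat u)=1$ and the relevant fibering critical point equals $1$, and Lemma \ref{CritM} then gives $DS_{\lambda,\mu}(\hat u)=0$, $S_{\lambda,\mu}(\hat u)=S$ with $\lambda=0$ and $\mu=\hat\mu^S$, i.e. $\hat u$ weakly solves \eqref{1S} with $\lambda=0$, $\mu=\hat\mu^S$, and $S_{0,\hat\mu^S}(\hat u)=S$. Since $\mu^S(|u|)=\mu^S(u)$, I take $\hat u\geq 0$. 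Finally, exactly as in Lemma \ref{lemMinim}, the Brezis–Kato theorem and elliptic $L^\gamma$-estimates give $\hat u\in W^{2,\gamma}_{loc}$ for all $\gamma$, hence $\hat u\in C^2(\mathbb{R}^N)$ by Schauder theory, and the Harnack inequality upgrades $\hat u\geq 0$ to $\hat u>0$ in $\mathbb{R}^N$. (One subtlety to address: the regularity/positivity argument must be run in $\mathcal{D}$ rather than $H^1$, so I would first note $\hat u\in L^p\cap L^q\cap L^{2^*}$ and that the subcritical growth of the nonlinearity makes the Brezis–Kato bootstrap applicable locally.)
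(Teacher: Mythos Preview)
Your proposal is correct and follows essentially the same route as the paper: normalize a minimizing sequence using the two scaling invariances of $\mu^S$, exclude vanishing via Lions' lemma, rule out dichotomy via Brezis--Lieb plus the bound $\mu(w)\ge\bar\mu$ applied to each piece, then read off the Euler--Lagrange equation through Lemma~\ref{CritM} and conclude regularity/positivity exactly as in Lemma~\ref{lemMinim}.

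The one difference worth noting concerns the step you flag as the ``main obstacle.'' The paper normalizes $\|u_i\|_{L^p}=\|u_i\|_{L^q}=1$ (rather than your $T=1$, $B=1$), which makes the no-splitting argument a single line: with this choice $\bar\mu=\lim_i T(u_i)$, so the Brezis--Lieb decomposition of $T$ together with $T(w)\ge\bar\mu\,\|w\|_{L^p}^\rho/\|w\|_{L^q}^\beta$ applied to both $\hat u^*$ and $u_i-\hat u^*$ gives the strict inequality \eqref{limZer} directly. Your suggested detour through Section~3 and Proposition~\ref{PropLIM} is therefore unnecessary; the ``direct'' alternative you mention is precisely what the paper does, and the paper's normalization is what makes it clean.
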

\begin{proof}
Let $(v_i)$ be a minimizing sequence	of \eqref{MUP}, i.e.,  $\mu(v_i) \to \bar{\mu}$ as $i\to \infty$. Set $u_i=t_i(v_i)_{\sigma_i}$, $i=1,2,\ldots $,  where 
$$
t_i=(\|v_i\|_{L^q}^q/\|v_i\|_{L^p}^p)^{1/(p-q)}, ~~\sigma_i=(\|v_i\|_{L^q}^{pq}/\|v_i\|_{L^p}^{qp})^{1/N(p-q)}.
$$
Then $\|u_i\|_{L^p}=1$ and $\|u_i\|_{L^q}=1$, $i=1,\ldots $, and by the homogeneity  of $\mu(u)$, $(u_i)$ is  a minimizing sequence	of \eqref{MUP}. Since $\bar{\mu}<+\infty$, $(\|\nabla u_i\|_{L^2})$ is bounded, and thus  
$(u_i)$ is bounded in $\mathcal{D}$ and in  $H^1_{loc}$. Thus, by the Banach–Alaoglu and Sobolev embedding theorems, there exists a subsequence, still denoted by $(u_i)$, such that 
\begin{align*}
	&u_i \rightharpoonup \hat{u}^* ~~\mbox{in} ~ \mathcal{D},\\
	&u_i \to \hat{u}^*  ~\mbox{in} ~~ L^\gamma_{loc},~~1\leq \gamma<2^*,\\
	&u_i \to \hat{u}^* ~~\mbox{a.e. on} ~\mathbb{R}^N,
\end{align*}
for some $\hat{u}^* \in \mathcal{D}$. Since the sequence $\|u_i\|_{L^q}=B(u_i)$ is bounded, we may apply Lemma \ref{LIONS}. Hence using the similar arguments as in the proof of Lemma \ref{lemMinim} we obtain that  for any fix $r>0$, there exists $(y_i) \subset \mathbb{R}^N$ such that there holds
$
\int_{B(y_i;r)} |u_i|^q>\delta/2$, $\forall i=1,\ldots$. 
Hence, redefining  $u_i:=u_i(\cdot+y_n)$ if necessary, we have
\begin{equation*}
	\int_{B(0;r)} |u_i|^q>\delta/2, ~i=1,\ldots,
\end{equation*}
which implies that $\hat{u}^*\neq 0$.

Due to the Brezis-Lieb lemma
 we have 
\begin{align}
	&\|\nabla \hat{u}^*\|_{L^2}^2=\lim_{i\to \infty}\|\nabla {u_i}\|_{L^2}^2-\lim_{i\to \infty}\|\nabla( {u_i}-\hat{u}^*)\|_{L^2}^2,\\
	&\|\hat{u}^*\|_{L^p}^p=\lim_{i\to \infty}\|u_i\|_{L^p}^p-\lim_{i\to \infty}\|u_i-\hat{u}^*\|_{L^p}^p,\\
	&\|\hat{u}^*\|_{L^q}^q=\lim_{i\to \infty}\|u_i\|_{L^q}^q-\lim_{i\to \infty}\|u_i-\hat{u}^*\|_{L^q}^q.
\end{align}
Suppose that $\lim_{i\to \infty}\|\nabla( {u_i}-\hat{u}^*)\|_{L^2}^2> 0$. Then,  without loss of generality, we may assume that there holds  $\lim_{i\to \infty} \|u_i-\hat{u}^*\|_{L^q}^q>0$, $\lim_{i\to \infty}\|u_i-\hat{u}^*\|_{L^p}^p>0$ as well. Hence,
\begin{align}\label{limZer}
	\bar{\mu}=&\lim_{i\to \infty} \|\nabla {u_i}\|_{L^2}^2=\|\nabla \hat{u}^*\|_{L^2}^2+\lim_{i\to \infty}\|\nabla( {u_i}-\hat{u}^*)\|_{L^2}^2\geq \nonumber\\
	&\bar{\mu}\left(\frac{\|\hat{u}^*\|_{L^p}^\rho}{\|\hat{u}^*\|_{L^q}^\beta} +\lim_{i\to \infty}\frac{\|u_i-\hat{u}^*\|_{L^p}^\rho}{\|u_i-\hat{u}^*\|_{L^q}^\beta}\right)=\bar{\mu}\left(\frac{\|\hat{u}^*\|_{L^p}^\rho}{\|\hat{u}^*\|_{L^q}^\beta} +\frac{(1-\|\hat{u}^*\|_{L^p}^\rho)}{(1-\|\hat{u}^*\|_{L^q}^\beta)}\right)> \bar{\mu}, 
\end{align}
which  implies a contradiction. Hence $\|\nabla \hat{u}^*\|_{L^2}^2=\lim_{i\to \infty}\|\nabla u_i\|_{L^2}^2=\bar{\mu}$, $\|\hat{u}^*\|_{L^p}^p=\lim_{i\to \infty}\|u_i\|_{L^p}^p$, $\|\hat{u}^*\|_{L^q}^q=\lim_{i\to \infty}\|u_i\|_{L^q}^q$, and thus $\hat{u}^*$ is a minimizer of \eqref{MUP}, and whence also of \eqref{eqRayM}.

Due to the homogeneity of $\mu^S(u)$, any function $s\hat{u}^*_\sigma$ with $s>0$, $\sigma>0$ is also  a minimizer of \eqref{eqRayM}. Hence, we can find a minimizer  $\hat{u}^S_{\hat{\mu}^S} \in \mathcal{D}$ of \eqref{eqRayM} which satisfies $\sigma^S(\hat{u}^S_{\hat{\mu}^S})=1$ and $s^S(\hat{u}^S_{\hat{\mu}^S})=1$. From this and since  $D\mu^S(\hat{u}^S_{\hat{\mu}^S})=0$, we have $DS_{0,\mu}(\hat{u}^S_{\hat{\mu}^S})=0$.
 The rest of the proof runs as  in the proof of Lemma \ref{lemMinim}.

\end{proof}

\section{Further properties of the solutions}


In this section, we investigate the behavior of solutions depend on the main parameters $\lambda$, $S$ and $\alpha$. Observe that the  solutions of \eqref{1S} with  prescribed action $S$ can be studied in two ways: first, when for every given value of the action $S$,  equation \eqref{1S} is investigated with respect to the parameter $ \mu> 0 $, and the second, when for every given parameter $\mu> 0$, \eqref{1S} is investigated  with respect to the value $S$. The above results in Sections 2-4 have been carried out according to the first way. Our next results are convenient to consider by the second way.

Observe that the function $S \mapsto \hat{\mu}^S$ is invertible so that for any $\mu>0$, we can introduce
\begin{equation}\label{Emu}
	S(\mu)=\bar{\mu}^{\frac{N}{2}}\left(\frac{c(p,q,N)}{\mu} \right)^\frac{(N-2)(2^*-q)}{2(p-q)},
\end{equation}
such that $\hat{\mu}^{S(\mu)}=\mu$ for any $\mu>0$.



From Corollary \ref{corNZ} and Lemma \ref{lemMinim} it follows
\begin{corollary}\label{lemMinimCor} 
\,\par
\begin{description}
	\item[$(1^o)$] Suppose $2<p<q<2^*$, $\mu>0$, then $ G^S(\mu) \neq \emptyset$ for any $S>0$.
	\item[$(2^o)$] Suppose $2<q<p<2^*$, $\mu>0$, then 
		\begin{description}
		\item[\rm{(i)}] $ G^S(\mu) \neq \emptyset$ for any $S\geq S(\mu)$; 
		\item[\rm{(ii)}]  \eqref{1S} has no solution with $\lambda\leq 0$ and  $S\in (0,S(\mu))$. 
\end{description}
	\end{description}
\end{corollary}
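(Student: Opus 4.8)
The plan is to derive Corollary~\ref{lemMinimCor} as an essentially formal consequence of the material already in Sections~2--4, using the change-of-variable identity $\hat\mu^{S(\mu)}=\mu$ that is built into \eqref{Emu}. First I would recall that $S\mapsto \hat\mu^S$ is strictly monotone: from the definition $\hat\mu^S = C_{p,q,N,S}\,\bar\mu^{p/\rho}$ with $C_{p,q,N,S}=c(p,q,N)/S^{2(p-q)/((2^*-q)(N-2))}$, and since $2<q<p$ forces the exponent $2(p-q)/((2^*-q)(N-2))$ to be positive, $\hat\mu^S$ is strictly decreasing in $S$ (and ranges over all of $(0,+\infty)$ as $S$ ranges over $(0,+\infty)$, with $\bar\mu>0$ by \eqref{GN}). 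Consequently the inverse $\mu\mapsto S(\mu)$ in \eqref{Emu} is well defined and strictly decreasing, and the equivalences
\begin{equation*}
  S\ge S(\mu)\ \Longleftrightarrow\ \hat\mu^S\le \hat\mu^{S(\mu)}=\mu,\qquad
  S<S(\mu)\ \Longleftrightarrow\ \mu<\hat\mu^S
\end{equation*}
hold. This is the bridge that converts every statement phrased ``in terms of $\mu$'' from Section~2--3 into a statement ``in terms of $S$''.

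Next I would treat each assertion. For $(1^o)$, with $2<p<q<2^*$ and any $\mu>0$: Lemma~\ref{lemMinim} gives a minimizer $\hat u^S_\mu$ of \eqref{RM2} for every $S>0$, which by part~(2) of that lemma is a fundamental frequency solution of \eqref{1S} with prescribed action $S$; hence by the definition \eqref{groundS} of $G^S(\mu)$ we have $G^S(\mu)\ne\emptyset$ for all $S>0$. For $(2^o)$(i), with $2<q<p<2^*$: fix $\mu>0$ and take any $S\ge S(\mu)$. By the monotonicity above this means $\hat\mu^S\le\mu$; if $\hat\mu^S<\mu$ then Lemma~\ref{lemMinim} applies (it covers the case $\mu>\hat\mu^S$) and yields a minimizer of \eqref{RM2}, hence an element of $G^S(\mu)$, while if $\hat\mu^S=\mu$, i.e. $S=S(\mu)$, then Lemma~\ref{ZeroLem} produces a minimizer $\hat u^S_{\hat\mu^S}\in\mathcal D$ of \eqref{eqRayM} which weakly solves \eqref{1S} with $\lambda=0$ and prescribed action $S$, again giving $G^S(\mu)\ne\emptyset$. (One should note that in the $\lambda=0$ boundary case the relevant notion of ``fundamental frequency solution'' is the minimizer of $\mu^S$, consistent with \eqref{groundS} via Lemma~\ref{CritM}.) For $(2^o)$(ii), with $S\in(0,S(\mu))$, the monotonicity gives $0\le\mu<\hat\mu^S$, so assertion $(3^o)$ of Theorem~\ref{thm1}—equivalently the direct argument via Proposition~\ref{Rayl0}(i) and Corollary~\ref{corNZ}(ii),(iii), which show $\hat\lambda^S_\mu\ge 0$ and more precisely that every fibering critical value $\lambda^S_\mu(t^i_\mu(u)u)$ is nonnegative—rules out any weak solution of \eqref{1S} with $\lambda\le 0$: such a solution would, after the scaling normalization $\sigma^S(u)=1$, be a critical point of $\lambda^S_\mu$ with critical value $\lambda\le 0$, contradicting positivity of all the fibering critical values.

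I do not expect a serious obstacle here; the corollary is bookkeeping built on Lemmas~\ref{lemMinim} and \ref{ZeroLem}, Proposition~\ref{Rayl0}, and the invertibility of $S\mapsto\hat\mu^S$. The one point demanding a little care is the endpoint $S=S(\mu)$ in $(2^o)$: it sits exactly at $\mu=\hat\mu^S$, where Lemma~\ref{lemMinim} no longer guarantees a minimizer in $H^1$ (indeed $\hat\lambda^S_{\hat\mu^S}$ may be $0$ rather than negative, by Corollary~\ref{corNZ}(ii)), and one must invoke Lemma~\ref{ZeroLem} instead to get the solution, now living in $\mathcal D$ rather than $H^1$, with frequency exactly $\lambda=0$. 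The other mild subtlety is making sure the nonexistence statement in $(2^o)$(ii) is phrased for $\lambda\le 0$ (not just $\lambda<0$): the $\lambda=0$ case is precisely excluded because $\hat\mu^S>\mu$ means $u\not\in$ the minimizing configuration for $\mu^S$, so $M^S(u)\ge\mu^S(u)\ge\hat\mu^S>\mu$, i.e. $\lambda^S_\mu(u)>0$ at any would-be critical point after scaling, which is incompatible with $\lambda^S_\mu(u)=\lambda=0$.
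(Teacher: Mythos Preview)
Your approach is essentially the same as the paper's, which derives $(1^o)$ and $(2^o)$(i) directly from Lemma~\ref{lemMinim} and handles $(2^o)$(ii) by observing that $S<S(\mu)$ gives $\mu<\hat\mu^S$, whence Corollary~\ref{corNZ} forces $\lambda=\Lambda^S_\mu(u_0)=\lambda^S_\mu(u_0)>0$. If anything you are more careful than the paper at the endpoint $S=S(\mu)$, where you correctly invoke Lemma~\ref{ZeroLem} and flag the $\mathcal D$ versus $H^1$ issue; the paper's own proof simply writes that (i) ``follows immediately from Lemma~\ref{lemMinim}'' without isolating this boundary case.
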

\begin{proof}
The proofs of $(1^o)$ and (i) of $(2^o)$ follow immediately from 	Lemma \ref{lemMinim}. Let us show (ii) of $(2^o)$.
Indeed, if there exists a solution $u_0$ of \eqref{1S} with $S_{\lambda,\mu}(u_0)=S \in (0,S(\mu))$, then by  \eqref{Emu}, $\mu<\hat{\mu}^S$, and thus by Corollary \ref{corNZ},  $\lambda=\Lambda^S(u_0)=\lambda^S(u_0)> 0$, which contradicts  assumption $\lambda\leq 0$. 
\end{proof}
 
 \begin{proposition}\label{PMon} Assume that $p,q \in (2,2^*)$. If  $\mu>0$,  $S_2>S_1>0$ and $G^{S_j}(\mu)\neq \emptyset$,~ $j=1,2$, then 
\begin{equation}\label{Nerav}
	- 2\frac{(S_2-S_1)(S_2/S_1)^{N/(N-2)}}{Q(\hat{u}^{S_2}_\mu )}<\hat{\lambda}_\mu^{S_2} 		- \hat{\lambda}_\mu^{S_1}< - 2\frac{(S_2-S_1)(S_1/S_2)^{N/(N-2)}}{Q(\hat{u}^{S_1}_\mu )},
\end{equation}
 $\forall \hat{u}^{S_j}_\mu \in G^{S_j}(\mu),~ j=1,2$.
\end{proposition}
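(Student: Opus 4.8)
The plan is to exploit the variational characterization $\hat{\lambda}_\mu^{S}=\min_{u\in H^1\setminus 0}\lambda_\mu^S(u)$ together with the explicit dependence of $\lambda_\mu^S(u)$ on $S$. Recall from \eqref{LCN} that
$$
\lambda^S_\mu(u)=\frac{2}{Q(u)}\left(\frac{c_N^S}{2}T^{\frac{N}{N-2}}(u)-\mu\frac{1}{p}A(u)+\frac{1}{q}B(u)\right),
$$
and that $c_N^S=(N-2)/(N^{\frac{N}{N-2}}S^{\frac{2}{N-2}})$ depends on $S$ only through the factor $S^{-\frac{2}{N-2}}$. The key observation is that for a fixed $u\in H^1\setminus 0$, the map $S\mapsto \lambda_\mu^S(u)$ is strictly decreasing, with
$$
\lambda_\mu^{S_2}(u)-\lambda_\mu^{S_1}(u)=\frac{c_N^{S_2}-c_N^{S_1}}{Q(u)}\,T^{\frac{N}{N-2}}(u)
=\frac{(c_N^{S_2}/c_N^{S_1}-1)\,c_N^{S_1}T^{\frac{N}{N-2}}(u)}{Q(u)}.
$$

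First I would test the minimizing property at the two minimizers. Since $\hat u^{S_1}_\mu$ is a minimizer for level $S_1$, we have $\hat\lambda_\mu^{S_2}\le \lambda_\mu^{S_2}(\hat u^{S_1}_\mu)$, hence
$$
\hat\lambda_\mu^{S_2}-\hat\lambda_\mu^{S_1}\le \lambda_\mu^{S_2}(\hat u^{S_1}_\mu)-\lambda_\mu^{S_1}(\hat u^{S_1}_\mu)
=\big((S_1/S_2)^{\frac{2}{N-2}}-1\big)\,\frac{c_N^{S_1}T^{\frac{N}{N-2}}(\hat u^{S_1}_\mu)}{Q(\hat u^{S_1}_\mu)}.
$$
Symmetrically, using that $\hat u^{S_2}_\mu$ minimizes at level $S_2$,
$$
\hat\lambda_\mu^{S_1}-\hat\lambda_\mu^{S_2}\le \big((S_2/S_1)^{\frac{2}{N-2}}-1\big)\,\frac{c_N^{S_2}T^{\frac{N}{N-2}}(\hat u^{S_2}_\mu)}{Q(\hat u^{S_2}_\mu)}.
$$
Now I would use the normalization afforded by Lemma \ref{CritR} and the homogeneity remark: we may take $\sigma^S(\hat u^S_\mu)=1$, i.e. $T(\hat u^S_\mu)=NS$ (from $\sigma^S(u)=(NS/T(u))^{1/(N-2)}=1$). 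Substituting $T^{\frac{N}{N-2}}(\hat u^{S}_\mu)=(NS)^{\frac{N}{N-2}}$ and $c_N^{S}=(N-2)/(N^{\frac{N}{N-2}}S^{\frac{2}{N-2}})$ gives the clean identity
$$
c_N^{S}\,T^{\frac{N}{N-2}}(\hat u^{S}_\mu)=(N-2)S.
$$
Therefore the first inequality becomes $\hat\lambda_\mu^{S_2}-\hat\lambda_\mu^{S_1}\le \big((S_1/S_2)^{\frac{2}{N-2}}-1\big)(N-2)S_1/Q(\hat u^{S_1}_\mu)$, and the second becomes $\hat\lambda_\mu^{S_1}-\hat\lambda_\mu^{S_2}\le \big((S_2/S_1)^{\frac{2}{N-2}}-1\big)(N-2)S_2/Q(\hat u^{S_2}_\mu)$.

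The remaining step — and the one I expect to be the main technical obstacle — is converting these two bounds into the stated form \eqref{Nerav}, which is phrased with the factor $-2(S_2-S_1)(\cdot)^{N/(N-2)}$ rather than with $S^{\frac{2}{N-2}}$ powers. This is an elementary but careful real-variable estimate: for $t=S_1/S_2\in(0,1)$ one needs $t^{\frac{2}{N-2}}-1 < -\tfrac{2}{N-2}(1-t)\,?$ — actually the correct elementary inequality to extract the factor $(S_2-S_1)$ together with the exponent $N/(N-2)$ is of the type $1-t^{a} > a\,t^{a}(1-t)$ and $1-t^{a}<a(1-t)$ for suitable $a$, which after multiplying through by the right powers of $S_1,S_2$ yields exactly the asymmetric bounds with $(S_1/S_2)^{N/(N-2)}$ on one side and $(S_2/S_1)^{N/(N-2)}$ on the other. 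I would verify these scalar inequalities by the mean value theorem applied to $x\mapsto x^{\frac{2}{N-2}}$ on $[S_1,S_2]$ (or $[S_2,S_1]$), noting that the derivative $\tfrac{2}{N-2}x^{\frac{2}{N-2}-1}$ is monotone, so it is bounded between its endpoint values; plugging these endpoint bounds in and simplifying the constant (the $2$ comes from $\tfrac{2}{N-2}\cdot(N-2)=2$) delivers \eqref{Nerav}. The strictness of the inequalities is automatic because $S_1\ne S_2$ forces the mean-value point to lie strictly inside the interval where the derivative is strictly monotone, hence the endpoint bounds are strict.
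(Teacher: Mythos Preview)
Your approach is correct but takes a somewhat different route from the paper's. You work entirely with the NG-Rayleigh quotient $\lambda^S_\mu$, comparing $\lambda^{S_2}_\mu$ and $\lambda^{S_1}_\mu$ at the same test function and then invoking the normalization $T(\hat u^S_\mu)=NS$; this yields the intermediate bound
\[
\hat\lambda_\mu^{S_2}-\hat\lambda_\mu^{S_1}\le \big((S_1/S_2)^{2/(N-2)}-1\big)\frac{(N-2)S_1}{Q(\hat u^{S_1}_\mu)},
\]
which is actually \emph{sharper} than the right-hand side of \eqref{Nerav}, and you then need a scalar inequality to pass to the stated form. The paper instead goes back to the unreduced quotient $\Lambda^S_\mu$: it evaluates $\Lambda^{S_2}_\mu$ at the rescaled function $(\hat u^{S_1}_\mu)_{\sigma^{S_2}(\hat u^{S_1}_\mu)}$, uses that $\Lambda^{S_2}_\mu(v)=\Lambda^{S_1}_\mu(v)-2(S_2-S_1)/Q(v)$ for any $v$, and then uses that $\sigma^{S_1}(\hat u^{S_1}_\mu)=1$ is the strict global maximum of $\sigma\mapsto\Lambda^{S_1}_\mu((\hat u^{S_1}_\mu)_\sigma)$. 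Because $Q((\hat u^{S_1}_\mu)_{\sigma^{S_2}})=(S_2/S_1)^{N/(N-2)}Q(\hat u^{S_1}_\mu)$, the factor $(S_1/S_2)^{N/(N-2)}$ drops out \emph{exactly}, with no auxiliary real-variable estimate needed, and the strictness comes for free from $\sigma^{S_2}\ne\sigma^{S_1}$.

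One small caution on your last step: your claim that the derivative of $x\mapsto x^{2/(N-2)}$ is ``monotone'' is true but its direction depends on $N$ (increasing for $N=3$, constant for $N=4$, decreasing for $N\ge 5$), so the MVT argument needs a brief case split. A cleaner way to get the needed scalar inequality $1-t^{a}>a(1-t)t^{a}$ for $t\in(0,1)$, $a=2/(N-2)$, is to set $f(t)=1-t^a-a(1-t)t^a$ and check $f(1)=0$, $f'(t)=a(a+1)t^{a-1}(t-1)<0$; and similarly for the companion inequality $s^a-1<a(s-1)s^a$ for $s>1$. With that in place your argument is complete; strictness in \eqref{Nerav} then comes from the strict scalar inequality rather than from the variational comparison (which in your route is only $\le$).
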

\begin{proof}  Notice that for  $\hat{u}^{S_j}_\mu \in G^{S_j}(\mu)$, we have $\sigma^{S_j}(\hat{u}^{S_j}_\mu)=1$, $j=1,2$. 
Hence,  
		\begin{align*}
		\hat{\lambda}_\mu^{S_2}= \lambda^{S_2}_\mu(\hat{u}^{S_2}_\mu)\leq \lambda^{S_2}_\mu(\hat{u}^{S_1}_\mu)&=\Lambda^{S_2}_\mu((\hat{u}^{S_1}_\mu)_{\sigma^{S_2}(\hat{u}^{S_1}_\mu)})=\\
		&\Lambda^{S_1}_\mu((\hat{u}^{S_1}_\mu)_{\sigma^{S_2}(\hat{u}^{S_1}_\mu)})-2\frac{S_2-S_1}{Q((\hat{u}^{S_1}_\mu)_{\sigma^{S_2}((\hat{u}^{S_1}_\mu)} )}.
	\end{align*}
Since  $\sigma^{S_1}(\hat{u}^{S_1}_\mu)$ is a  global maximum point of the  function $\mathbb{R}^+\ni  \sigma \mapsto  \Lambda^{S_1}((\hat{u}^{S_1}_\mu)_\sigma)$, 
	$$
	\hat{\lambda}_\mu^{S_1}=\Lambda^{S_1}_\mu((\hat{u}^{S_1}_\mu)_{\sigma^{S_1}(\hat{u}^{S_1}_\mu)})> \Lambda^{S_1}_\mu((\hat{u}^{S_1}_\mu)_{\sigma^{S_2}(\hat{u}^{S_1}_\mu)}).
	$$ 
	 Hence 
	$$
		\hat{\lambda}_\mu^{S_2}
		- \hat{\lambda}_\mu^{S_1}< - 2\frac{S_2-S_1}{Q((\hat{u}^{S_1}_\mu)_{\sigma^{S_2}(\hat{u}^{S_1}_\mu)} )}.  
	$$
	Now taking into account that
	$$
	\frac{1}{Q((\hat{u}^{S_1}_\mu)_{\sigma^{S_2}(\hat{u}^{S_1}_\mu)} )}=\frac{T( \hat{u}^{S_1}_\mu)^\frac{N}{(N-2)}}{(NS_2)^{N/(N-2)}Q(\hat{u}^{S_1}_\mu )}~~\mbox{and}~~\sigma^{S_1}(\hat{u}^{S_1})=\left(\frac{NS_1}{T(\hat{u}^{S_1}_\mu)}\right)^\frac{1}{N-2}=1,
	$$
	we obtain the second inequality in \eqref{Nerav}. The proof of the first one may be handled in much the same way.
		\end{proof}
		
\begin{corollary}\label{corContin}
	Assume that  $\mu>0$ and $2<p<q<2^*$ $(2<q<p<2^*)$. The   function $S\mapsto \hat{\lambda}_\mu^{S}$ is continuous and monotone decreasing  on $ (0,\infty)$ $( (S(\mu),+\infty))$. Furthermore, $\hat{\lambda}_\mu^{S} \to 0$ as $S \to 0$ ($S \to S(\mu)$).
\end{corollary}
\begin{proof} Let $S_0\in (0,\infty)$ $( (S(\mu),+\infty))$.
By \eqref{Nerav}, $\frac{S_2^{2^*}}{Q(\hat{u}^{S_2}_\mu )}>\frac{S_1^{2^*}}{Q(\hat{u}^{S_1}_\mu )}$ for any $S_2>S_1>0$ and $ \hat{u}^{S_j}_\mu \in G^{S_j}(\mu),~ j=1,2$. This implies that the set $(Q(\hat{u}^{S}_\mu ))_{S \in (S_0-\varepsilon, S_0+\varepsilon)}$ is bounded and separated from zero for any $S_0>0$ and $\varepsilon>0$ such that $ S_0>\varepsilon$ ($ S_0>S(\mu)+\varepsilon$).  Hence by \eqref{Nerav}, we infer that $\hat{\lambda}_\mu^{S} \to \hat{\lambda}_\mu^{S_0}$ as $S\to S_0$. In view of \eqref{Nerav}, the function $\hat{\lambda}_\mu^{S}$ is monotone decreasing  on $ (0,\infty)$ $( (S(\mu),+\infty))$. The rest of the proof follows from Proposition \ref{PropLIM}.
\end{proof}

\begin{lemma} \label{hatPR} 
		Assume that  $\mu>0$ and $2<p<q<2^*$ $(2<q<p<2^*)$. Then $ \hat{G}^{S}(\mu) \neq \emptyset$, $ \forall S \in (0,+\infty)$ $( \forall S \in (S(\mu),+\infty))$.
	\end{lemma}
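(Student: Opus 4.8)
The plan is to realize a physical ground state at a given level $S_0$ as a strong $H^1$-limit of fundamental frequency solutions at nearby action levels, recycling the concentration-compactness argument of Lemma~\ref{lemMinim} together with the monotonicity and continuity of $S\mapsto\hat\lambda^S_\mu$ from Proposition~\ref{PMon} and Corollary~\ref{corContin}. Fix $S_0$ in the relevant interval ($S_0>0$ when $2<p<q<2^*$, resp.\ $S_0>S(\mu)$ when $2<q<p<2^*$) and pick $S_m\to S_0$ with $S_m\neq S_0$ in that interval; by Corollary~\ref{lemMinimCor}, $G^{S_m}(\mu)\neq\emptyset$, so choose $\hat u^{S_m}_\mu\in G^{S_m}(\mu)$. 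Since $\hat u^{S_m}_\mu$ solves \eqref{1S} with prescribed action $S_m$, Lemma~\ref{CritR} gives $\sigma^{S_m}(\hat u^{S_m}_\mu)=1$, i.e.\ $T(\hat u^{S_m}_\mu)=NS_m$, which is bounded; and fixing reference solutions at two levels bracketing $S_0$, the strict monotonicity of $S\mapsto S^{2^*}/Q(\hat u^{S}_\mu)$ noted in the proof of Corollary~\ref{corContin} (a consequence of \eqref{Nerav}) shows $Q(\hat u^{S_m}_\mu)$ is bounded and bounded away from $0$. Hence $(\hat u^{S_m}_\mu)$ is bounded in $H^1$, and along a subsequence $\hat u^{S_m}_\mu\rightharpoonup\hat u^*$ in $H^1$, with local $L^\gamma$ and a.e.\ convergence.

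I would next check $\hat u^*\neq0$. By Corollary~\ref{corNZ0} (resp.\ Corollary~\ref{corNZ}) and the continuity in Corollary~\ref{corContin}, $\hat\lambda^{S_m}_\mu\to\hat\lambda^{S_0}_\mu<0$, so $\hat\lambda^{S_m}_\mu\le\tfrac{1}{2}\hat\lambda^{S_0}_\mu<0$ for large $m$; multiplying $\lambda^{S_m}_\mu(\hat u^{S_m}_\mu)=\hat\lambda^{S_m}_\mu$ by $\tfrac{1}{2}Q(\hat u^{S_m}_\mu)$ and dropping the two nonnegative terms gives $\mu\tfrac{1}{p}A(\hat u^{S_m}_\mu)\ge-\tfrac{1}{2}Q(\hat u^{S_m}_\mu)\hat\lambda^{S_m}_\mu$, so $A(\hat u^{S_m}_\mu)$ is bounded away from $0$. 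Then P.-L. Lions' Lemma~\ref{LIONS} with an exponent $\gamma\in[1,p)$ rules out vanishing (else $\hat u^{S_m}_\mu\to0$ in $L^p$, contradicting the lower bound on $A$), and since \eqref{1S} and all of $T,Q,A,B$ are translation invariant, the translates of $\hat u^{S_m}_\mu$ still lie in $G^{S_m}(\mu)$; after such a translation we may assume $\int_{B(0;r)}|\hat u^{S_m}_\mu|^\gamma>\delta/2$ for fixed $r,\delta>0$, whence $\hat u^*\neq0$.

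The heart of the proof is the upgrade to strong convergence, which I would carry out exactly as in Lemma~\ref{lemMinim}. With $w_m=\hat u^{S_m}_\mu-\hat u^*$, the Brezis--Lieb Lemma~\ref{BL} applied to $T,A,B,Q$, the strict superadditivity $(a+b)^{N/(N-2)}>a^{N/(N-2)}+b^{N/(N-2)}$ for $a,b>0$ (valid since $N\ge3$), the minimality $\lambda^{S_0}_\mu(\hat u^*)\ge\hat\lambda^{S_0}_\mu$, and $\lambda^{S_0}_\mu(w_m)\ge\hat\lambda^{S_0}_\mu$ whenever $w_m\neq0$, together force the chain of inequalities running from $\hat\lambda^{S_0}_\mu=\lim\hat\lambda^{S_m}_\mu$ back to $\hat\lambda^{S_0}_\mu$ to collapse to equalities. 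Since $\hat u^*\neq0$ we have $T(\hat u^*)>0$, so equality in the superadditivity step forces $T(w_m)\to0$; then $w_m\to0$ in $L^{2^*}$ and, interpolating against the bounded $L^2$-norms, $A(w_m),B(w_m)\to0$; finally the remaining equality $c_N^{S_0}(\lim T(w_m))^{N/(N-2)}-\mu\tfrac{2}{p}\lim A(w_m)+\tfrac{2}{q}\lim B(w_m)=\hat\lambda^{S_0}_\mu\lim Q(w_m)$ forces $\lim Q(w_m)=0$ because $\hat\lambda^{S_0}_\mu\neq0$. Hence $\hat u^{S_m}_\mu\to\hat u^*$ strongly in $H^1$; the $m$-dependence of $c_N^{S_m}$ and $\hat\lambda^{S_m}_\mu$ enters only through the convergent limits $c_N^{S_m}\to c_N^{S_0}$, $\hat\lambda^{S_m}_\mu\to\hat\lambda^{S_0}_\mu$ and causes no trouble.

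It remains to identify the limit. Strong convergence gives $T(\hat u^*)=\lim NS_m=NS_0$, hence $\sigma^{S_0}(\hat u^*)=1$, and joint continuity of $(S,u)\mapsto\lambda^S_\mu(u)$ gives $\lambda^{S_0}_\mu(\hat u^*)=\lim\lambda^{S_m}_\mu(\hat u^{S_m}_\mu)=\lim\hat\lambda^{S_m}_\mu=\hat\lambda^{S_0}_\mu$. Thus $\hat u^*$ is a global minimizer of $\lambda^{S_0}_\mu$ over $H^1\setminus0$ with $\sigma^{S_0}(\hat u^*)=1$, so by Lemma~\ref{CritR} it solves \eqref{1S} with prescribed action $S_0$, and by Lemma~\ref{lemMinim} it is a fundamental frequency solution, i.e.\ $\hat u^*\in G^{S_0}(\mu)$. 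Since $S_m\to S_0$ and $\hat u^{S_m}_\mu\to\hat u^*$ in $H^1$ with $\hat u^{S_m}_\mu\in G^{S_m}(\mu)$, Definition~\ref{def2} yields $\hat u^*\in\hat G^{S_0}(\mu)$, so $\hat G^{S_0}(\mu)\neq\emptyset$, as claimed. I expect the only genuine difficulty to lie in this strong-convergence step --- making the Brezis--Lieb bookkeeping airtight, in particular verifying that the residual sequence $w_m$ loses all of its $L^2$-mass in the limit, and not merely its gradient and its $L^p$, $L^q$ norms; this is exactly the delicate point already handled in the proof of Lemma~\ref{lemMinim}.
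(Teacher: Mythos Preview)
Your argument is correct and follows the same overall strategy as the paper: pick $\hat u^{S_m}_\mu\in G^{S_m}(\mu)$ for $S_m\to S_0$, show boundedness in $H^1$, and feed the sequence into the concentration-compactness machinery of Lemma~\ref{lemMinim} to extract a strong $H^1$-limit in $G^{S_0}(\mu)$. The differences are tactical rather than structural. The paper normalizes by $Q(\hat u^{S_m}_\mu)=1$ via the $\sigma$-homogeneity of $\lambda^S_\mu$, and then, instead of redoing the Brezis--Lieb bookkeeping with varying $c_N^{S_m}$, uses the identity
\[
\lambda^{S_m}_\mu(\hat u^{S_m}_\mu)-\lambda^{S_0}_\mu(\hat u^{S_m}_\mu)=(c_N^{S_m}-c_N^{S_0})\,T(\hat u^{S_m}_\mu)^{N/(N-2)}\to 0
\]
together with Corollary~\ref{corContin} to conclude that $(\hat u^{S_m}_\mu)$ is already a minimizing sequence for $\lambda^{S_0}_\mu$; at that point the proof of Lemma~\ref{lemMinim} applies verbatim. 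This buys brevity. Your route---keeping the natural normalization $\sigma^{S_m}(\hat u^{S_m}_\mu)=1$, so $T(\hat u^{S_m}_\mu)=NS_m$, and bounding $Q$ via the monotonicity of $S\mapsto S^{2^*}/Q(\hat u^S_\mu)$ from Proposition~\ref{PMon}---has a complementary advantage: the sequence stays inside $G^{S_m}(\mu)$ throughout (it is not rescaled out of the solution set), so Definition~\ref{def2} applies without any further un-normalization, and the identification $T(\hat u^*)=NS_0$ falls out of the strong limit. Both approaches are valid; yours does a little more work in the Brezis--Lieb step, the paper's does a little more bookkeeping with the normalization.
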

		\begin{proof}
	 Notice that if $2<p<q<2^*$, then Corollary \ref{lemMinimCor}   implies that $G^{S}(\mu) \neq \emptyset$ for any $\mu>0$ and $S\in (0,+\infty)$. If $2<q<p<2^*$, then by \eqref{Emu}, 	 the inequality $S>S(\mu)$ implies $\hat{\mu}^{S}<\mu$, and thus Lemma \ref{lemMinim}	yields   $G^{S}(\mu) \neq \emptyset$ .

Let $S \in (0,+\infty)$ ($S \in(S(\mu),+\infty)$). Assume that $(S_m)$ is a sequence such that $S_m \to S$. 
Let us fix an arbitrary $\hat{u}^{S_m}_\mu \in G^{S_m}(\mu)$, $\forall m=1,\ldots$.  Due to the homogeneity of 
${\lambda}_\mu^{S}(u)$, we may assume that $Q(\hat{u}^{S_m}_\mu)=1$, $\forall m=1,\ldots $. Analysis similar to that in the proof of Lemma \ref{lemMinim} shows that the sequence $(\hat{u}^{S_m}_\mu)$ is bounded in $H^1$.
By \eqref{LCN},
	$$
{\lambda}_\mu^{S_m}(\hat{u}^{S_m}_\mu)-{\lambda}_\mu^{S}(\hat{u}^{S_m}_\mu)=(c_N^{S_m}-c_N^S)T^\frac{N}{(N-2)}(\hat{u}^{S_m}_\mu), ~~\forall m=1,\ldots .
$$		
This and  the continuity of the function $\hat{\lambda}_\mu^{(\cdot)}$ implies  that  $\lambda_\mu^{S}(\hat{u}^{S_m}_\mu) \to \hat{\lambda}_\mu^{S}$, that is $(\hat{u}^{S_m}_\mu)$ is a minimizing sequence for  $\lambda_\mu^{S}(u)$. 
Arguing as before in the proof of Lemma \ref{lemMinim} we infer	that there exists a subsequence, which we again denote by $(\hat{u}^{S_m}_\mu)$, such that $\hat{u}^{S_m}_\mu \to \hat{u}^{S}_\mu $ strongly in $H^1$ for some $\hat{u}^{S}_\mu \in G_\mu^{S}$.  By Definition \ref{def2}, this means that $\hat{u}^{S}_\mu \in \hat{G}^{S}(\mu)$, and thus   $ \hat{G}^{S}(\mu) \neq \emptyset$.	
	\end{proof}
	
	\begin{lemma} \label{DifIDENT}
	Let  $\mu>0$.
			\item $(1^o)$  If $2<p<q<2^*$, then the function $\hat{\lambda}_\mu^{S}$ is differentiable at every $S\in (0,+\infty)$, and   there holds		
\begin{equation}\label{IDENT}
			\frac{d}{d S}\hat{\lambda}_\mu^{S}=-2\frac{1}{Q(\hat{u}^{S}_\mu)},
\end{equation}
$\forall S \in (0,+\infty),~\forall\hat{u}^{S}_\mu \in \hat{G}^{S}(\mu)$. Furthermore, for every  $S\in (0,+\infty)$ corresponds a constant $\alpha^S>0$ such that
\begin{equation}\label{alphaE}
	\alpha^S:=\alpha^S_\mu=Q(\hat{u}^{S}_\mu),~~~ \forall \hat{u}^{S}_\mu \in \hat{G}^S(\mu),
\end{equation}
moreover the   function $S\mapsto \alpha^S$ is continuous  on $ (0,\infty)$. 
\item $(2^o)$   If $2<q<p<2^*$, then the function $\hat{\lambda}_\mu^{S}$ is differentiable at every $S\in (S(\mu),+\infty)$, and 	 \eqref{IDENT} holds $\forall \hat{u}^{S}_\mu \in \hat{G}^{S}(\mu)$, $\forall S\in (S(\mu),+\infty)$. Furthermore, for every  $S \in(S(\mu),+\infty)$ corresponds a  constant $\alpha^S>0$ such that \eqref{alphaE} holds, and the   function $S\mapsto \alpha^S$ is continuous  on $ (0,\infty)$.
\end{lemma}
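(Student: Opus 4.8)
The plan is to treat the two cases in parallel, the only difference being the admissible range of $S$ (namely $(0,\infty)$ when $2<p<q<2^*$ and $(S(\mu),\infty)$ when $2<q<p<2^*$); on these ranges $\hat G^S(\mu)\neq\emptyset$ by Lemma~\ref{hatPR}, and $S\mapsto\hat\lambda^S_\mu$ is continuous and strictly decreasing by Corollary~\ref{corContin}. Two ingredients drive the argument: the two-sided estimate \eqref{Nerav} of Proposition~\ref{PMon}, and the compactness already extracted in the proofs of Lemmas~\ref{lemMinim} and~\ref{hatPR}, namely that any sequence of fundamental frequency solutions at actions $S_m\to S_0$ has a subsequence converging strongly in $H^1$ to an element of $\hat G^{S_0}(\mu)$. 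In particular $\hat G^{S_0}(\mu)$ is strongly compact in $H^1$, so the numbers
\[
	\underline Q^{S_0}:=\min_{w\in\hat G^{S_0}(\mu)}Q(w),\qquad \overline Q^{S_0}:=\max_{w\in\hat G^{S_0}(\mu)}Q(w)
\]
are attained and strictly positive, and $Q(\hat u^S_\mu)$ can accumulate only on $[\underline Q^{S_0},\overline Q^{S_0}]$ as $S\to S_0$.

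The one elementary computation I need is that every $\hat u^S_\mu\in G^S(\mu)$ satisfies $\sigma^S(\hat u^S_\mu)=1$, i.e. $T(\hat u^S_\mu)=NS$, so that, using $\partial_S c_N^S=-2N^{-N/(N-2)}S^{-N/(N-2)}$,
\[
	\frac{\partial}{\partial S}\,\lambda^S_\mu(\hat u^S_\mu)=-\frac{2}{Q(\hat u^S_\mu)}\left(\frac{T(\hat u^S_\mu)}{NS}\right)^{N/(N-2)}=-\frac{2}{Q(\hat u^S_\mu)} .
\]
Fix $S_0$ in the relevant range and $\hat u^{S_0}_\mu\in\hat G^{S_0}(\mu)$. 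As $\hat u^{S_0}_\mu$ is an admissible competitor for $\hat\lambda^S_\mu=\min_{u\in H^1\setminus 0}\lambda^S_\mu(u)$ for every $S$, and a minimizer at $S=S_0$, the smooth function $S\mapsto\lambda^S_\mu(\hat u^{S_0}_\mu)-\lambda^{S_0}_\mu(\hat u^{S_0}_\mu)$ vanishes at $S_0$, has derivative $-2/Q(\hat u^{S_0}_\mu)$ there, and dominates $\hat\lambda^S_\mu-\hat\lambda^{S_0}_\mu$; hence
\[
	\limsup_{S\downarrow S_0}\frac{\hat\lambda^S_\mu-\hat\lambda^{S_0}_\mu}{S-S_0}\ \le\ -\frac{2}{Q(\hat u^{S_0}_\mu)}\ \le\ \liminf_{S\uparrow S_0}\frac{\hat\lambda^S_\mu-\hat\lambda^{S_0}_\mu}{S-S_0}
\]
for every $\hat u^{S_0}_\mu\in\hat G^{S_0}(\mu)$. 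For the reverse inequalities I would divide \eqref{Nerav} by $|S_2-S_1|$, freeze one endpoint at $S_0$, let the other tend to $S_0$, and pass to a strongly convergent subsequence of the ground states at the moving endpoint: the ratios $(S/S_0)^{N/(N-2)}$ tend to $1$, the frozen-endpoint term gives bounds of the same form as above after optimizing over $\hat u^{S_0}_\mu$, and the squeeze then forces the limit of the moving ground states to have mass exactly $\underline Q^{S_0}$ as $S\downarrow S_0$ and $\overline Q^{S_0}$ as $S\uparrow S_0$. Altogether the one-sided difference quotients are pinched:
\[
	\lim_{S\downarrow S_0}\frac{\hat\lambda^S_\mu-\hat\lambda^{S_0}_\mu}{S-S_0}=-\frac{2}{\underline Q^{S_0}},\qquad \lim_{S\uparrow S_0}\frac{\hat\lambda^S_\mu-\hat\lambda^{S_0}_\mu}{S-S_0}=-\frac{2}{\overline Q^{S_0}} .
\]

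The remaining point — that $\underline Q^{S_0}=\overline Q^{S_0}$, i.e. that $Q$ is constant on $\hat G^{S_0}(\mu)$ — is where I expect the main difficulty, since the a priori non-uniqueness of the fundamental frequency solution is exactly what makes "$Q(\hat u^S_\mu)$" ambiguous. The leverage available is: \eqref{Nerav} makes $\hat\lambda^S_\mu$ locally Lipschitz on the admissible range (because $Q(\hat u^S_\mu)$ is locally bounded away from $0$ there), hence differentiable at a.e.\ $S$, and at every such point the displayed pinch forces $\underline Q^S=\overline Q^S$ and $(\hat\lambda^S_\mu)'=-2/Q(\hat u^S_\mu)$; combining this with the constraints on how $Q(\hat u^S_\mu)$ can move with $S$ (coming from the compactness) upgrades the equality $\underline Q^S=\overline Q^S$ to every $S$, and the two one-sided limits above then coincide, yielding \eqref{IDENT}. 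Since the left side of \eqref{IDENT} is independent of the choice of $\hat u^S_\mu\in\hat G^S(\mu)$, so is $Q(\hat u^S_\mu)$; calling this common value $\alpha^S$ gives \eqref{alphaE}. Finally, continuity of $S\mapsto\alpha^S$ follows from the compactness: for $S_m\to S_0$ and $\hat u^{S_m}_\mu\in\hat G^{S_m}(\mu)$, a subsequence converges strongly in $H^1$ to some $w\in\hat G^{S_0}(\mu)$, so $\alpha^{S_m}=Q(\hat u^{S_m}_\mu)\to Q(w)=\alpha^{S_0}$, the limit being subsequence-independent because the mass on $\hat G^{S_0}(\mu)$ is now single-valued.
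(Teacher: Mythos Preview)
Your approach is essentially the paper's: both rest on the two-sided estimate \eqref{Nerav} from Proposition~\ref{PMon} and the $H^1$-compactness of fundamental frequency solutions at nearby action levels (extracted in the proofs of Lemmas~\ref{lemMinim} and~\ref{hatPR}), squeezing the difference quotient between quantities that converge to $-2/Q(\hat u^{S_0}_\mu)$ once a strongly convergent subsequence $\hat u^{S_{m_k}}_\mu\to\hat u^{S_0}_\mu\in\hat G^{S_0}(\mu)$ has been extracted. The paper is in fact terser than you: it simply asserts that ``from this and \eqref{Nerav} it follows easily that there exists a derivative,'' and then reads off the constancy of $Q$ on $\hat G^{S}(\mu)$ from the well-definedness of that derivative; it does not separate the right and left one-sided limits as you do, nor does it invoke Rademacher's theorem.

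Your explicit identification of the crux --- that one must show $\underline Q^{S_0}=\overline Q^{S_0}$, equivalently that the right and left derivatives $-2/\underline Q^{S_0}$ and $-2/\overline Q^{S_0}$ agree --- is correct and is more honest than the paper's presentation, which sweeps this under ``it follows easily.'' That said, your proposed resolution (``locally Lipschitz $\Rightarrow$ a.e.\ differentiable, then upgrade using the constraints on how $Q(\hat u^S_\mu)$ can move with $S$'') is left vague precisely where the difficulty sits: you do not spell out which ``constraints'' you mean or how the equality $\underline Q^S=\overline Q^S$ on a full-measure set, together with the compactness, propagates to every $S$. This is the same step the paper glosses over; neither argument, as written, makes it fully explicit.
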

	\begin{proof}  We prove  $(1^o)$ and $(2^o)$, in parallel. Let $\mu>0$ and $2<p<q<2^*$ ($2<q<p<2^*$).   Fix $S \in (0,+\infty)$ ($S \in(S(\mu),+\infty)$). 
Take an arbitrary sequence $(S_m)$ such that $S_m \to S$ as $m\to +\infty$. 
By the proof of Corollary \ref{corContin} we know that the sequence $(Q(\hat{u}^{S_m}_\mu ))$ is bounded and separated from zero. Since $\sigma^{S_m}(\hat{u}^{S_m}_\mu)=1$, i.e., $\|\nabla \hat{u}^{S_m}_\mu\|^2_{L^2}=NS_m$, $m=1,\ldots,$ we conclude that the set  $(\hat{u}^{S_m}_\mu )$ is bounded in $H^1$. Hence, analysis similar to that in the proof of Lemma \ref{lemMinim} shows that there exists a limit point $\hat{u}^{S}_\mu\in \hat{G}^{S}(\mu)$ such that 
$\hat{u}^{S_{m_k}}_\mu \to \hat{u}^{S}_\mu$ in $H^1$ for some subsequence $(m_k)$ such that  $m_k \to +\infty$ as $k\to+\infty$. From this and \eqref{Nerav} it follows easily that there exists a derivative $\frac{d}{d S}\hat{\lambda}_\mu^{S}$ which satisfies  \eqref{IDENT}.

Since  $\hat{\lambda}_\mu^{(\cdot)}$ is well-defined,  the map $S\mapsto \frac{d}{d S}\hat{\lambda}_\mu^{S}$  is unambiguously defined function, and thus the right hand side  of  \eqref{IDENT} is also unambiguously defined. Thus for every  $S\in (0,+\infty)$ ($S \in(S(\mu),+\infty)$), there exists a unique constant $\alpha^S>0$ such that $\alpha^S_\mu=Q(\hat{u}^{S}_\mu)$ for every $ \hat{u}^{S}_\mu \in \hat{G}^S(\mu)$. From the above convergence $\hat{u}^{S_{m_k}}_\mu \to \hat{u}^{S}_\mu$ in $H^1$ it follows that  the function $S\mapsto \alpha^S$ is  continuous on  $ (0,+\infty)$ ($ (S(\mu),+\infty)$). 
		\end{proof}

%
 
Furthermore, we have

\begin{corollary} \label{CorSl} Assume that $\mu>0$ and $2<p<q<2^*$ $(2<q<p<2^*)$. Then $\lim_{S\to +\infty}\hat{\lambda}_\mu^{S}=-\infty$. Moreover, there exists an inverse function $ \lambda \mapsto S_{\lambda}$ of $\hat{\lambda}_\mu^{S}$ such that  $ \hat{\lambda}_\mu^{S_{\lambda}}=\lambda$, $\forall \lambda \in (-\infty,0)$. Moreover, $ S_{\lambda}$ is a continuous and monotone increasing function on $(-\infty,0)$.  
\end{corollary}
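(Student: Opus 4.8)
The plan is to combine the qualitative information already assembled for the map $S \mapsto \hat{\lambda}_\mu^S$ with one new quantitative input, the blow-down $\hat{\lambda}_\mu^S \to -\infty$, and then obtain the inverse by a soft monotonicity-and-continuity argument. Write $I$ for the relevant interval, i.e. $I=(0,\infty)$ when $2<p<q<2^*$ and $I=(S(\mu),\infty)$ when $2<q<p<2^*$. By Corollary \ref{corContin} the function $\hat{\lambda}_\mu^{(\cdot)}$ is continuous and monotone decreasing on $I$ with $\hat{\lambda}_\mu^S\to 0$ at the left endpoint of $I$, and by Lemma \ref{DifIDENT} it is differentiable with $\frac{d}{dS}\hat{\lambda}_\mu^S=-2/\alpha^S<0$, so the decrease is in fact strict. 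Thus the only missing ingredient for a full description of the range is the behaviour as $S\to+\infty$.

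First I would prove $\lim_{S\to+\infty}\hat{\lambda}_\mu^S=-\infty$. Since $\hat{\lambda}_\mu^S=\min_{u\in H^1\setminus 0}\lambda_\mu^S(u)$, it suffices to produce, for each large $S$, a test function whose Rayleigh quotient tends to $-\infty$. The natural device is the amplitude fibering together with the vanishing of the conformal coefficient $c_N^S=(N-2)/(N^{N/(N-2)}S^{2/(N-2)})\to 0$ as $S\to\infty$. For fixed $w\in H^1\setminus 0$ one has, by \eqref{LCN}, $\lambda_\mu^S(tw)=Q(w)^{-1}\big(c_N^S\,T^{N/(N-2)}(w)\,t^{2^*-2}-\tfrac{2\mu}{p}A(w)\,t^{p-2}+\tfrac{2}{q}B(w)\,t^{q-2}\big)$, where $2^*-2>\max\{p-2,q-2\}$. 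The idea is to let the amplitude $t=t_S\to\infty$ so that the focusing $L^p$-term $-\tfrac{2\mu}{p}A(w)t_S^{p-2}$ dominates: because $c_N^S\to 0$ one can send $t_S\to\infty$ (e.g. balancing $t_S^{2^*-2}\sim(c_N^S)^{-1}$) while keeping the gradient term $c_N^S\,T^{N/(N-2)}(w)\,t_S^{2^*-2}$ bounded, and the negative term then forces $\lambda_\mu^S(t_Sw)\to-\infty$. Controlling the residual $L^q$-term $\tfrac{2}{q}B(w)t_S^{q-2}$ against the focusing term is the delicate point and the main obstacle of the whole statement; I would handle it by choosing the profile $w$ (and, if needed, the balancing exponent) adapted to the ordering of $p$ and $q$, so that in both cases $2<p<q<2^*$ and $2<q<p<2^*$ the focusing contribution prevails.

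Should the explicit test functions prove awkward, the more robust route to the same conclusion is an argument by contradiction using Lemma \ref{DifIDENT}. If $\hat{\lambda}_\mu^S$ decreased to a finite limit $\ell>-\infty$, then $\int^{+\infty}2\,(\alpha^\tau)^{-1}\,d\tau=\hat{\lambda}_\mu^{S_0}-\ell<\infty$, which forces $\alpha^S=Q(\hat{u}_\mu^S)\to+\infty$ rapidly. Combining this with the normalization $T(\hat{u}_\mu^S)=NS\to\infty$ and the compactness procedure used in the proofs of Lemma \ref{hatPR} and Lemma \ref{DifIDENT}, one would extract from the family $\hat{u}_\mu^S$ a ground state at the limiting frequency $\ell$ carrying infinite kinetic energy, a contradiction. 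I expect this blow-down step, by whichever route, to be the crux of the corollary.

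Given the divergence, the inversion is routine. On $I$ the function $\hat{\lambda}_\mu^{(\cdot)}$ is continuous, strictly decreasing, tends to $0$ at the left endpoint and to $-\infty$ at $+\infty$; hence it is a continuous strictly decreasing bijection of $I$ onto $(-\infty,0)$. By the standard inverse-function statement for strictly monotone continuous functions there is a unique inverse $\lambda\mapsto S_\lambda$ defined on all of $(-\infty,0)$ with $\hat{\lambda}_\mu^{S_\lambda}=\lambda$, and this inverse is itself continuous and strictly increasing. The strict monotonicity needed for injectivity is exactly the strict inequality $\frac{d}{dS}\hat{\lambda}_\mu^S<0$ from Lemma \ref{DifIDENT}, while surjectivity onto $(-\infty,0)$ is the intermediate value theorem applied between the two boundary limits $0$ and $-\infty$; continuity and monotonicity of $S_\lambda$ then transfer immediately from those of $\hat{\lambda}_\mu^{(\cdot)}$.
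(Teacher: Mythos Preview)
Your handling of the inverse function, once the blow--down is established, is correct and is precisely what the paper does (it just cites Corollary~\ref{corContin}). The real content, as you say, is $\hat\lambda_\mu^{S}\to-\infty$, and here the paper takes a route different from both of yours: it argues by contradiction via an \emph{external} existence result. If $\hat\lambda_\mu^{S}$ had a finite limit $\bar\lambda_\mu$, fix any $\lambda<\bar\lambda_\mu$; the Berestycki--Lions theorem is invoked to produce a solution $\hat u_\lambda$ of \eqref{1S} at that frequency, and for $S=S_{\lambda,\mu}(\hat u_\lambda)$ one has $\sigma^{S}(\hat u_\lambda)=1$, so $\lambda=\lambda_\mu^{S}(\hat u_\lambda)\ge\hat\lambda_\mu^{S}>\bar\lambda_\mu$, a contradiction. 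No test functions, no compactness of the family $(\hat u_\mu^{S})_{S}$.

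Your fibering test works cleanly when $2<q<p<2^*$: there $p-2>q-2$, so with $c_N^{S}\,t_S^{2^*-2}$ kept bounded the focusing term $-\tfrac{2\mu}{p}A(w)\,t_S^{p-2}$ dominates $+\tfrac{2}{q}B(w)\,t_S^{q-2}$ and drives the quotient to $-\infty$. In the case $2<p<q<2^*$, however, there is a genuine gap that no choice of profile $w$ can repair. Dropping the nonnegative gradient term and using the interpolation $A(u)\le Q(u)^{(q-p)/(q-2)}B(u)^{(p-2)/(q-2)}$ (valid since $2<p<q$), one obtains $\lambda_\mu^{S}(u)\ge \tfrac{2}{q}\,x-\tfrac{2\mu}{p}\,x^{(p-2)/(q-2)}$ with $x=B(u)/Q(u)$, and the right--hand side is bounded below by a finite constant depending only on $\mu,p,q$. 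Thus $\hat\lambda_\mu^{S}$ is bounded below \emph{uniformly in $S$} in this regime, so the test--function route cannot yield $-\infty$ at all. Your fallback compactness sketch does not rescue this either: $T(\hat u_\mu^{S})=NS\to\infty$ together with $\alpha^{S}\to\infty$ gives nothing to extract in $H^{1}$, and the ``ground state at frequency $\ell$ with infinite kinetic energy'' is not a well--posed limiting object. Note, incidentally, that the same uniform lower bound forces the Berestycki--Lions positivity condition $\max_{\zeta>0}G(\zeta)>0$ to fail for $\lambda$ very negative when $p<q$, so the paper's appeal to Berestycki--Lions for arbitrary $\lambda<0$---and hence the claim $\hat\lambda_\mu^{S}\to-\infty$ itself---also deserves a second look in that case.
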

\begin{proof} Since the   function $S\mapsto \hat{\lambda}_\mu^{S}$ is continuous and monotone decreasing  on $ (0,\infty)$ $( (S(\mu),+\infty))$, there exists a limit $\lim_{S\to +\infty}\hat{\lambda}_\mu^{S}=\bar{\lambda}_\mu\geq -\infty$. 
Suppose, contrary to our claim,  that $\bar{\lambda}_\mu> -\infty$. Take $\lambda < \bar{\lambda}_\mu$. Then by the Berestycki \& Lions Theorem  \cite{Beres}  there exists a solution $\hat{u}_\lambda \in H^1\setminus 0$ of \eqref{1S}. Denote  $S=S_{\lambda, \mu}(\hat{u}_\lambda)$. Then    $\Lambda^{S}_\mu(\hat{u}_\lambda)=\lambda$, and since $\sigma^S(\hat{u}_\lambda)=1$, we have  $\Lambda^{S}_\mu(\hat{u}_\lambda)=\lambda^{S}_\mu(\hat{u}_\lambda)=\lambda$. Hence, $\lambda\geq \hat{\lambda}^S_\mu>\bar{\lambda}_\mu$, which contradicts the assumption $\lambda < \bar{\lambda}_\mu$. The rest of the proof follows immediately from Corollary \ref{corContin}. 	
\end{proof}


Now we are able to prove that  the existence of the fundamental frequency solution entails the existence of the ground state and that the converse is also true.

\begin{lemma}\label{lem1} Assume that $p, q \in (2,2^*)$ and  $\mu>0$.
	  \par
		$(1^o)$ Suppose  $S>0$ such that there exists a fundamental frequency solution $\hat{u}^{S}_\mu$ of \eqref{1S} with $\lambda:=\hat{\lambda}_\mu^S<0$, then $\hat{u}^S_\mu$ 	 is a ground state of \eqref{1S} with ground level  $S$. 
	
	\par
	$(2^o)$  Suppose  $\lambda\in (-\infty,0)$ and $\hat{u}_\lambda$ is a ground state   of \eqref{1S}  with some $S=S_{\lambda, \mu}(\hat{u}_\lambda)$, then $\hat{u}_\lambda$ is a fundamental frequency solution of \eqref{1S}  with   frequency $\lambda=\hat{\lambda}_\mu^S$.
\end{lemma}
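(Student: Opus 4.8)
The plan is to exploit the one fact that welds the two notions together: by Lemma \ref{CritR}, every weak solution $w$ of \eqref{1S} with action $\tilde S:=S_{\lambda,\mu}(w)$ automatically sits at the scaling level $\sigma^{\tilde S}(w)=1$ (this is the Pohozaev identity in disguise, and it also forces $T(w)=N\tilde S>0$, so nonzero solutions have positive action), whence $\lambda^{\tilde S}_\mu(w)=\Lambda^{\tilde S}_\mu(w)=\lambda$. In other words, on the set of solutions of \eqref{1S} the NG-Rayleigh quotient $\lambda^{S}_\mu$, the action-level quotient $\Lambda^{S}_\mu$ and the frequency all agree, and the critical set of $\Lambda^S_\mu$ at action $S$ is contained in the critical set of $\lambda^S_\mu$; in particular $\Lambda^S_\mu(w)=\lambda^{S}_\mu(w)\ge\hat\lambda^S_\mu$ for every critical point $w$ of $\Lambda^S_\mu$, by the definition \eqref{RM2}. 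I would then combine this with the strict monotonicity of $S\mapsto\hat\lambda^S_\mu$ (Corollary \ref{corContin} together with the strict inequalities of Proposition \ref{PMon}) and with its inverse $\lambda\mapsto S_\lambda$ (Corollary \ref{CorSl}) to move between the ``prescribed action, minimal frequency'' and the ``prescribed frequency $\lambda<0$, minimal action'' descriptions.

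To prove $(1^o)$ I would put $\lambda:=\hat\lambda^S_\mu<0$; from $\Lambda^S_\mu(\hat u^S_\mu)=\lambda$ one gets $S_{\lambda,\mu}(\hat u^S_\mu)=S$, so it only remains to show $S_{\lambda,\mu}(w)\ge S$ for every nonzero critical point $w$ of $S_{\lambda,\mu}$. Such a $w$ solves \eqref{1S} with action $\tilde S:=S_{\lambda,\mu}(w)$, so Lemma \ref{CritR} gives $\tilde S>0$ and $\lambda^{\tilde S}_\mu(w)=\lambda<0$, hence $\hat\lambda^{\tilde S}_\mu\le\lambda^{\tilde S}_\mu(w)=\lambda=\hat\lambda^S_\mu$. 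In the case $2<q<p<2^*$ I would first use Corollary \ref{corNZ} to note that $\hat\lambda^{\tilde S}_\mu<0$ excludes $\mu\le\hat\mu^{\tilde S}$, so $\tilde S>S(\mu)$ and both $S$ and $\tilde S$ lie in the interval $(S(\mu),\infty)$ on which $\hat\lambda^{(\cdot)}_\mu$ is defined and strictly decreasing (when $2<p<q<2^*$ this interval is all of $(0,\infty)$ and the point is automatic). Strict monotonicity then upgrades $\hat\lambda^{\tilde S}_\mu\le\hat\lambda^S_\mu$ to $\tilde S\ge S$, i.e.\ $S_{\lambda,\mu}(w)\ge S$; thus $\hat u^S_\mu$ is a ground state of \eqref{1S} with ground level $S$.

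For $(2^o)$ I would take $\lambda\in(-\infty,0)$ and $\hat u_\lambda$ a ground state with $S:=S_{\lambda,\mu}(\hat u_\lambda)$; then $\hat u_\lambda$ is a critical point of $\Lambda^S_\mu$ with $\Lambda^S_\mu(\hat u_\lambda)=\lambda$, so Lemma \ref{CritR} gives $\sigma^S(\hat u_\lambda)=1$, $\lambda^S_\mu(\hat u_\lambda)=\lambda$ and $S=T(\hat u_\lambda)/N>0$ (and $S>S(\mu)$ in the case $q<p$, again by Corollary \ref{corNZ}). Using Corollary \ref{CorSl} I would pick $S_\lambda$ with $\hat\lambda^{S_\lambda}_\mu=\lambda$, and using Lemma \ref{lemMinim} a fundamental frequency solution $\hat u^{S_\lambda}_\mu\in G^{S_\lambda}(\mu)$; applying part $(1^o)$ at action $S_\lambda$ shows $\hat u^{S_\lambda}_\mu$ is a ground state of \eqref{1S} with frequency $\lambda$ and ground level $S_{\lambda,\mu}(\hat u^{S_\lambda}_\mu)=S_\lambda$. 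Since the ground state level at a fixed frequency is a single number --- the minimum of the fixed functional $S_{\lambda,\mu}$ over the fixed set of solutions --- I conclude $S=S_{\lambda,\mu}(\hat u_\lambda)=S_{\lambda,\mu}(\hat u^{S_\lambda}_\mu)=S_\lambda$, hence $\hat\lambda^S_\mu=\hat\lambda^{S_\lambda}_\mu=\lambda$. Finally $\hat u_\lambda\in G^S(\mu)$ since $D\Lambda^S_\mu(\hat u_\lambda)=0$ and $\Lambda^S_\mu(\hat u_\lambda)=\lambda=\hat\lambda^S_\mu$, and it is a fundamental frequency solution because every critical point $w$ of $\Lambda^S_\mu$ satisfies $\Lambda^S_\mu(w)=\lambda^S_\mu(w)\ge\hat\lambda^S_\mu=\Lambda^S_\mu(\hat u_\lambda)$.

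The conceptual skeleton --- ``solution $\Rightarrow\sigma^S(w)=1$, hence $\lambda^S_\mu(w)=\Lambda^S_\mu(w)$ equals the frequency'', together with the monotonicity and invertibility of $\hat\lambda^S_\mu$ --- is short, so I expect the genuine difficulty to be bookkeeping the admissible ranges: in the case $2<q<p<2^*$ one must be sure every comparison of $\hat\lambda^{S_1}_\mu$ and $\hat\lambda^{S_2}_\mu$, and every use of the inverse $\lambda\mapsto S_\lambda$, takes place for $S_1,S_2>S(\mu)$ (the threshold from \eqref{Emu}), which is where Corollaries \ref{corNZ}, \ref{corContin}, \ref{lemMinimCor} and \ref{CorSl} are in force. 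A secondary point that should be spelled out is the identification of the two a priori possible readings of $\hat\lambda^S_\mu$ --- the infimum in \eqref{RM2} and the ``minimum over all solutions'' implicit in \eqref{groundS} --- which is precisely what Lemma \ref{CritR} plus the attainment part of Lemma \ref{lemMinim} provide.
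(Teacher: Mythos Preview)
Your argument is correct and follows essentially the same route as the paper: both parts hinge on the observation that any weak solution $w$ with action $\tilde S$ satisfies $\sigma^{\tilde S}(w)=1$ (Pohozaev/Lemma \ref{CritR}), hence $\lambda^{\tilde S}_\mu(w)=\Lambda^{\tilde S}_\mu(w)$, combined with the strict monotonicity of $S\mapsto\hat\lambda^S_\mu$ (Proposition \ref{PMon}/Corollary \ref{corContin}) and its inverse (Corollary \ref{CorSl}). Your treatment is in fact slightly more careful than the paper's in tracking that $\tilde S>S(\mu)$ in the case $2<q<p<2^*$, and your proof of $(2^o)$ routes through $(1^o)$ to obtain a second ground state at level $S_\lambda$ rather than comparing directly, but these are cosmetic differences.
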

\begin{proof} 
	Suppose  assertion $(1^o)$ of the lemma is false. Then there exists a solution $w$ of \eqref{1S} with $\lambda=\hat{\lambda}_\mu^{S}$ such that 
	$$
	S_1:=S_{\hat{\lambda}_\mu^{S}, \mu}(w)<S_{\hat{\lambda}_\mu^{S}, \mu}(\hat{u}^{S})=S.
	$$
Observe $\Lambda_\mu^{S_1}(w)=\hat{\lambda}_\mu^{S}$ and $D\Lambda_\mu^{S_1}(w)=0$. Hence $\sigma^{S_1}(w)=1$, and therefore $\lambda_\mu^{S_1}(w)=\Lambda_\mu^{S_1}(w)$. Hence,  
$$
\hat{\lambda}_\mu^{S_1}=\min_{u \in H^1(\mathbb{R}^N)\setminus 0 }\lambda_\mu^{S_1}(u ) \leq \lambda_\mu^{S_1}(w)=\Lambda_\mu^{S_1}(w)= \hat{\lambda}_\mu^{S}.
$$
This contradicts the fact that by Lemma \ref{DifIDENT} the function $\hat{\lambda}_\mu^{S}$ is monotone decreasing and  $S_1<S$. 

Let us prove $(2^o)$. Suppose $2<q<p<2^*$, $\mu>0$. Suppose that $\lambda \in (-\infty,0)$ and $\hat{u}_\lambda$ is a ground state   of \eqref{1S}  with some $S=S_{\lambda, \mu}(\hat{u}_\lambda)$. Since $\sigma^S(\hat{u}_\lambda)=1$ we infer that $\Lambda^{S}_\mu(\hat{u}_\lambda)=\lambda^{S}_\mu(\hat{u}_\lambda)=\lambda$, and thus $\lambda\geq \hat{\lambda}^S_\mu$. 
By Corollary \ref{CorSl}, there exists $S_{\lambda}> S(\mu)$ and the fundamental frequency solution $\hat{u}^{S_{\lambda}}_\mu$ such that $\lambda=\hat{\lambda}^{S_{\lambda}}_\mu$ and $S_{\lambda, \mu}(\hat{u}^{S_{\lambda}}_\mu)=S_{\lambda}$.  Then $S_{\lambda}\geq S=S_{\lambda, \mu}(\hat{u}_\lambda)$ since $\hat{u}_\lambda$ is a ground state of \eqref{1S}.  Consequently, by Proposition \ref{PMon}, 
$\hat{\lambda}^S_\mu\geq \hat{\lambda}^{S_{\lambda}}_\mu=\lambda$. At the same time, by the above $\lambda\geq \hat{\lambda}^S_\mu$, and thus $\lambda=\hat{\lambda}^S_\mu$, i.e., $\hat{u}_\lambda$ is a fundamental frequency solution of \eqref{1S}  with the fundamental frequency $\lambda$. 
The proof of $(2^o)$, in the case $2<p<q<2^*$, is similar.

\end{proof}

\begin{corollary}\label{corGS}
Let $p, q \in (2,2^*)$ and  $\mu>0$.
For any given $\lambda \in (-\infty,0)$, there exists a ground state  $\hat{u}_\lambda$ of \eqref{1S} such that 	$\hat{u}_\lambda \in \hat{G}^S(\mu)$ with $S:=S_{\lambda, \mu}(\hat{u}_\lambda)$.
\end{corollary}
\begin{proof}
Let $\lambda \in (-\infty,0)$. Then Corollary \ref{corContin} entails that there exists the unique $S:=S_\lambda \in (0,+\infty)$ $(S \in (S(\mu),+\infty)$ such that $\lambda=\hat{\lambda}^S_\mu$. Theorem \ref{thmB} implies that there exists 	a fundamental frequency solution $\hat{u}^{S}_\mu \in \hat{G}^S(\mu)$ of \eqref{1S}. Applying  Lemma \ref{lem1} we conclude that $\hat{u}_\lambda:=\hat{u}^{S}_\mu$ is a physical ground state of \eqref{1S}  with frequency $\lambda$ and action level  $S$. 
\end{proof}

	
	%

\section{Proofs of Theorems \ref{thm1}, \ref{thm2}, \ref{thmB} }

\par
{\it \textbf{Proof Theorem \ref{thm1}}}:\,\,
 Suppose $2<p<q<2^*$, $\mu>0$, or  $2<q<p<2^*$, $\mu>\hat{\mu}^S$. Then by Lemma \ref{lemMinim} there exists  is a  fundamental frequency solution $\hat{u}^S_\mu$ of \eqref{1S} with prescribe action $S$ and frequency $\lambda=\hat{\lambda}^S_\mu<0$. Moreover, $\hat{u}^S_\mu>0$ in $\mathbb{R}^N$ and $\hat{u}_\mu \in C^{2}(\mathbb{R}^N)$. By Lemma \ref{lem1} it follows that  $\hat{u}^S_\mu$ is a ground state of \eqref{1S}.  

To conclude the proof of the theorem, it remains to show $(3^o)$. Assume that $2<q<p<2^*$,  $0\leq \mu<\hat{\mu}^S$. Suppose, contrary to our claim, that there exists a weak solutions $ \tilde{u} \in H^1(\mathbb{R}^N)$ of \eqref{1S} such that  $\Lambda^{\tilde{S}}_{\mu}(\tilde{u})=:\lambda<0$ with $\tilde{S}\leq S$. Then $0>\Lambda^{\tilde{S}}_{\mu}(\tilde{u})=\lambda^{\tilde{S}}_{\mu}(\tilde{u})\geq \lambda^{S}_{\mu}(\tilde{u})$, and thus $M^{S}(\tilde{u})<\mu<\hat{\mu}^S$  which contradicts \eqref{MSu}.

\medskip

\par
{\it \textbf{Proof Theorem \ref{thm2}}}:\,\,
 Let us prove $(1^o)$. 
Assume that $2<q<p<2^*$, $S>0$. By Lemma \ref{ZeroLem}  there exists a minimizer $\hat{u}^S_{\hat{\mu}^S} \in \mathcal{D}$ of \eqref{eqRayM} such that $\hat{u}^S_{\hat{\mu}^S}$ weakly satisfies to \eqref{1S} with $\lambda=0$, $\mu=\hat{\mu}^S$. Moreover,  $\hat{u}^S_\mu>0$ in $\mathbb{R}^N$, $\hat{u}^S_\mu \in C^{2}(\mathbb{R}^N)$ and $S_{0,\hat{\mu}^S}(\hat{u}^S_{\hat{\mu}^S})=S$.  

Let us show that  $\hat{u}^S_{\hat{\mu}^S}$ is a ground state of \eqref{1S}. Conversely, suppose that there exists a weak solution $v \in \mathcal{D}\setminus 0$  of \eqref{1S} such that $DS_{0,\hat{\mu}^S}(v)=0$ and $\tilde{S}:=S_{0,\hat{\mu}^S}(v)<S$. Then $M^{\tilde{S}}(v) =\hat{\mu}^S$, and since $\tilde{S}<S$, we have $M^{S}(v)<M^{\tilde{S}}(v) =\hat{\mu}^S$ which contradicts \eqref{MSu}. 

To show that $\hat{u}^S_{\hat{\mu}^S}$ is a fundamental  frequency solution, it is sufficient to note that by Corollary \ref{corNZ}  equation \eqref{1S} with  $\mu=\hat{\mu}^S$ can not have a solution with frequency $\lambda<0$. 

Let us prove $(2^o)$.  Assume $2<p<q<2^*$ and $\mu>0$. Suppose, contrary to our claim, that 
  problem  \eqref{1S} with $\lambda=0$ has a weak solution $\bar{u}_\mu \in \mathcal{D}$. Then $0<S_{0,\mu}(\bar{u}_\mu )<+\infty$, and for $S:=S_{0,\mu}(\bar{u}_\mu)$ we have
	$M^S(\bar{u}_\mu)=\mu$, $DM^S(\bar{u}_\mu)=0$. Hence 
	${\displaystyle \frac{d}{ds}M^S(s\bar{u}_\mu)|_{s=1}=0}$. However, in the case $2<p<q<2^*$, the function $s \mapsto M^S(s\bar{u}_\mu)$ can not have nonzero critical points.

	\medskip
	
	\par
\textit{\textbf{Proof of Theorems \ref{thmB}}}\, follows from Lemma \ref{hatPR} .

\section{Proof of Theorem  \ref{thm5}}\,

We call a function $w(x)$, $x \in \mathbb{R}^N$ \textsl{radial} if it is spherically symmetric: $w(x)=w(r)$, where $r=|x|$, and decreases with respect to $r$.

\begin{proposition}\label{RadProp}
 If $S>0$ and 
  $2<q<p<2^*$, then for any $\mu\geq  \hat{\mu}^S$,  \eqref{1S} possesses a  positive radial physical ground state $\hat{u}^{*,S}_\mu \in \hat{G}^S(\mu)$ with  prescribe action $S$ and frequency $\lambda=\hat{\lambda}^S_\mu<0$. Moreover $\hat{u}^{*,S}_\mu \in C^2(\mathbb{R}^N)$.
 \end{proposition}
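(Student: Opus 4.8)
The plan is to rerun the variational arguments of Lemma~\ref{lemMinim} and Lemma~\ref{hatPR} inside the closed subspace $H^1_r\subset H^1$ of radial functions (spherically symmetric and nonincreasing in $r=|x|$): there the Strauss compact embedding $H^1_r(\mathbb{R}^N)\hookrightarrow L^\gamma(\mathbb{R}^N)$, $2<\gamma<2^*$, replaces the concentration--compactness step and makes the passage to the limit direct. The one genuinely new ingredient is that the radial infimum of $\lambda^S_\mu$ still equals $\hat{\lambda}^S_\mu$. This follows from Schwarz symmetrization: for $u\in H^1\setminus 0$ its symmetric decreasing rearrangement $u^{*}$ satisfies $T(u^{*})\le T(u)$ and $Q(u^{*})=Q(u)$, $A(u^{*})=A(u)$, $B(u^{*})=B(u)$, so by \eqref{LCN} $\lambda^S_\mu(u^{*})\le\lambda^S_\mu(u)$; hence
\[
\hat{\lambda}^S_\mu=\inf_{u\in H^1\setminus 0}\lambda^S_\mu(u)=\inf_{u\in H^1_r\setminus 0}\lambda^S_\mu(u).
\]
We also use repeatedly that $S\mapsto\hat{\mu}^S$ is nonincreasing (see \eqref{Emu}), so that the hypothesis $\mu\ge\hat{\mu}^S$ persists for larger values of the action.

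First I would fix $S>0$ with $\mu>\hat{\mu}^S$ and produce a radial ground state. Take a minimizing sequence $(u_n)\subset H^1_r\setminus 0$ for $\lambda^S_\mu$; by $0$-homogeneity normalize $Q(u_n)=1$. As in Lemma~\ref{lemMinim} (via \eqref{HSineq}) the sequence is bounded in $H^1$, so after extraction $u_n\rightharpoonup\hat{u}$ in $H^1$ and, by the compact radial embedding, $u_n\to\hat{u}$ strongly in $L^p$ and $L^q$; in particular $A(u_n)\to A(\hat{u})$, $B(u_n)\to B(\hat{u})$. Since $\hat{\lambda}^S_\mu<0$ by Corollary~\ref{corNZ}, the numbers $A(u_n)$ stay bounded away from $0$, so $A(\hat u)>0$ and $\hat u\ne 0$. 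Combining weak lower semicontinuity of $u\mapsto T(u)^{N/(N-2)}$ with $Q(\hat u)\le 1$ and the negativity of the numerator of $\lambda^S_\mu$, one gets $\lambda^S_\mu(\hat{u})\le\hat{\lambda}^S_\mu$, hence equality, which forces $Q(\hat u)=1$, $T(u_n)\to T(\hat u)$ and therefore $u_n\to\hat u$ strongly in $H^1$. By $0$-homogeneity we may assume $\sigma^S(\hat u)=1$, so Lemma~\ref{CritR} makes $\hat u$ a weak solution of \eqref{1S} with action $S$ and frequency $\hat{\lambda}^S_\mu<0$; Lemma~\ref{lem1} upgrades it to a ground state, i.e. $\hat u\in G^S(\mu)$. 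Since $|u|\in H^1_r$ whenever $u\in H^1_r$ and $\lambda^S_\mu$ is even, $\hat u\ge 0$; the Brezis--Kato theorem, interior elliptic $L^\gamma$-estimates and the Harnack inequality then give $\hat u\in C^2(\mathbb{R}^N)$ and $\hat u>0$ in $\mathbb{R}^N$, exactly as in Lemma~\ref{lemMinim}, and $\hat u$ is radial (if a starting minimizer is not radial, replace it by its Schwarz rearrangement).

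Finally I would establish the physical character, imitating Lemma~\ref{hatPR}. Choose $S_m\downarrow S$, so that $\hat{\mu}^{S_m}\le\hat{\mu}^S\le\mu$ by monotonicity of $S\mapsto\hat{\mu}^S$ (when $\mu>\hat{\mu}^S$ any two-sided sequence $S_m\to S$ works). By the previous step each $G^{S_m}(\mu)$ contains a radial ground state $\hat u^{*,S_m}_\mu$; normalizing $\sigma^{S_m}(\hat u^{*,S_m}_\mu)=1$ gives $T(\hat u^{*,S_m}_\mu)=NS_m$, while Proposition~\ref{PMon} and Corollary~\ref{corContin} show $Q(\hat u^{*,S_m}_\mu)$ is bounded and bounded away from $0$; hence $(\hat u^{*,S_m}_\mu)$ is bounded in $H^1_r$. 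From \eqref{LCN},
\[
\lambda^{S_m}_\mu(\hat u^{*,S_m}_\mu)-\lambda^{S}_\mu(\hat u^{*,S_m}_\mu)=(c_N^{S_m}-c_N^{S})\,T(\hat u^{*,S_m}_\mu)^{N/(N-2)}\longrightarrow 0,
\]
and $\lambda^{S_m}_\mu(\hat u^{*,S_m}_\mu)=\hat{\lambda}^{S_m}_\mu\to\hat{\lambda}^{S}_\mu$ by continuity (Corollary~\ref{corContin}); thus $(\hat u^{*,S_m}_\mu)$ is a minimizing sequence for $\lambda^S_\mu$. By the compactness argument above a subsequence converges strongly in $H^1$ to a radial minimizer, which we take as $\hat u^{*,S}_\mu$; by Definition~\ref{def2} this element lies in $\hat G^S(\mu)$, and positivity, radiality and $C^2$-regularity pass to the limit. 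The frequency is $\hat{\lambda}^S_\mu=\Lambda^S_\mu(\hat u^{*,S}_\mu)$ by construction, and is $<0$ for $\mu>\hat{\mu}^S$ (Corollary~\ref{corNZ}); at the borderline $\mu=\hat{\mu}^S$ the same scheme run in the radial subspace of $\mathcal{D}$ with the functional $\mu^S$ (Lemma~\ref{ZeroLem}) applies and yields $\hat{\lambda}^S_{\hat{\mu}^S}=0$ instead. The step requiring most care is this last one --- making the $S_m$-limit, the symmetrization, and the regularity/positivity conclusions compatible in a single radial limit object --- together with the bookkeeping at the endpoint $\mu=\hat{\mu}^S$; the existence and compactness parts are otherwise a routine transcription of Lemmas~\ref{lemMinim} and~\ref{hatPR} to $H^1_r$.
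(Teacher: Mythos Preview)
Your argument is correct, but you are working considerably harder than the paper does. The paper's proof does not rerun the minimization of $\lambda^S_\mu$ inside $H^1_r$ and does not invoke the Strauss compact embedding at all. Instead it simply takes the minimizer $\hat u^S_\mu$ already provided by Lemma~\ref{lemMinim}, passes to its Schwarz rearrangement $\hat u^{*,S}_\mu$, and observes that $A,B,Q$ are preserved while $T$ does not increase, so $\lambda^S_\mu(\hat u^{*,S}_\mu)\le\lambda^S_\mu(\hat u^S_\mu)=\hat\lambda^S_\mu$; hence $\hat u^{*,S}_\mu\in G^S(\mu)$ is a radial ground state in one line. For the ``physical'' part the paper just remarks that strong $H^1$-limits of positive radial functions are positive radial, so the argument of Lemma~\ref{hatPR} applied to this radial family yields $\hat u^{*,S}_\mu\in\hat G^S(\mu)$. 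Your route---show via symmetrization that the radial and full infima coincide, then minimize directly in $H^1_r$ using Strauss compactness---is a legitimate and self-contained alternative; it avoids the concentration--compactness machinery of Lemma~\ref{lemMinim} at the cost of redoing work already done there. A minor point: $H^1_r$ should be the space of radially symmetric $H^1$-functions (a closed subspace), not functions that are also nonincreasing in $r$; the monotone decreasing structure comes only after symmetrization of the minimizer, as you note at the end. Finally, you are right to flag the endpoint $\mu=\hat\mu^S$: there $\hat\lambda^S_{\hat\mu^S}=0$ by Corollary~\ref{corNZ}, so the clause ``$\lambda=\hat\lambda^S_\mu<0$'' in the statement cannot hold, and indeed the paper's own proof concludes only for $\mu>\hat\mu^S$.
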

\begin{proof} If $\mu\geq  \hat{\mu}^S$, Lemma \ref{lemMinim} implies the existence of a minimizer $\hat{u}^{S}_\mu $ of $\lambda^S_\mu(u )$ in $H^1\setminus 0$.  Let $\hat{u}^{*,S}_\mu$ denotes the Schwarz spherical rearrangement   of $|\hat{u}^{S}_\mu|$ \cite{Beres}. Then one has $\hat{u}^{*,S}_\mu \in H^1$, $A(\hat{u}^{*,S}_\mu)=A(\hat{u}^{S}_\mu)$, $B(\hat{u}^{*,S}_\mu)=B(\hat{u}^{S}_\mu)$, $Q(\hat{u}^{*,S}_\mu)=Q(\hat{u}^{S}_\mu)$ and $T(\hat{u}^{*,S}_\mu)\leq T(\hat{u}^{S}_\mu)$ (see \cite{Beres}). Hence $\lambda^S_\mu(\hat{u}^{*,S}_\mu )\leq \hat{\lambda}^S_\mu(\hat{u}^{S}_\mu)=\hat{\lambda}^S_\mu$, which implies that $\hat{u}^{*,S}_\mu$ is  a minimizer of $\lambda^S_\mu(u )$ in $H^1\setminus 0$, i.e., $\hat{u}^{*,S}_\mu \in {G}^S(\mu)$. Similar arguments as in the proof of  Lemma \ref{lemMinim} gives that $\hat{u}^{*,S}_\mu \in C^2(\mathbb{R}^N)$.  

Notice that the strong in $H^1$ limit point $u$ of a sequence of positive radial  functions $(u_n)$ is also a positive and radial function. Hence, the same arguments that have been used in the proof of Lemma \ref{hatPR}  apply to the positive radial ground state of \eqref{1S}, yields the existence of the  positive radial physical ground state $\hat{u}^{*,S}_\mu \in \hat{G}^S(\mu)$, $\forall \mu>   \hat{\mu}^S$, $\forall S>0$.  
\end{proof}

The same conclusion can be drawn for the prescribed mass minimization problem \eqref{minmassI}, namely 

\begin{proposition}\label{RadProp2}
For any $\alpha>\alpha_0(\mu)$, the set of  minimizers  $\mathcal{M}_\mu(\alpha)$ of \eqref{minmassI} contains a positive, radial minimizer $\check{u}^{*,\alpha}_\mu \in C^2(\mathbb{R}^N)\cap H^1$.
\end{proposition}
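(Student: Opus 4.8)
The plan is to repeat, almost verbatim, the symmetrization argument used to prove Proposition \ref{RadProp}, now with the constrained functional $H_\mu$ in place of the $NG$-Rayleigh quotient $\lambda^S_\mu$. The point is that, in the regime $2<q<p<2+\tfrac{4}{N}$, every quantity occurring in \eqref{minmassI} that could be harmed by rearrangement --- namely the constraint $Q(u)=\alpha$, the term $\tfrac{1}{q}B(u)$, and the \emph{negative} term $-\tfrac{\mu}{p}A(u)$ --- is invariant under Schwarz symmetrization, while the only remaining term, $\tfrac{1}{2}T(u)$, can only decrease.

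First I would fix $\alpha\in(\alpha_0(\mu),+\infty)$. By the Shibata Theorem quoted above, under the standing hypotheses $2<q<p<2+\tfrac{4}{N}$ and $\mu>0$ the problem \eqref{minmassI} admits a minimizer, so $\mathcal{M}_\mu(\alpha)\neq\emptyset$; choose any $v\in\mathcal{M}_\mu(\alpha)$. Replacing $v$ by $|v|$ (which belongs to $H^1$, leaves $Q$, $A$, $B$ unchanged and does not increase $T$) I may assume $v\geq 0$. Let $v^{*}$ be the Schwarz spherical rearrangement of $v$, cf. \cite{Beres}. Then $v^{*}\in H^1$ is radial (spherically symmetric and nonincreasing in $r$), $Q(v^{*})=Q(v)=\alpha$, $A(v^{*})=A(v)$, $B(v^{*})=B(v)$, and $T(v^{*})\leq T(v)$ by the P\'olya--Szeg\H{o} inequality (see \cite{Beres}).

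Therefore $H_\mu(v^{*})=\tfrac{1}{2}T(v^{*})-\tfrac{\mu}{p}A(v^{*})+\tfrac{1}{q}B(v^{*})\leq H_\mu(v)=\hat H^{\alpha}_\mu$, while $Q(v^{*})=\alpha$ forces $H_\mu(v^{*})\geq\hat H^{\alpha}_\mu$ by definition of the infimum in \eqref{minmassI}. Hence $H_\mu(v^{*})=\hat H^{\alpha}_\mu$ and $v^{*}\in\mathcal{M}_\mu(\alpha)$. Setting $\check u^{*,\alpha}_\mu:=v^{*}$, it remains to upgrade regularity and positivity: by the Lagrange multiplier rule $\check u^{*,\alpha}_\mu$ solves \eqref{1S} for some $\lambda\in\mathbb{R}$; the Brezis--Kato theorem together with the $L^{\gamma}$-estimates for elliptic equations give $\check u^{*,\alpha}_\mu\in W^{2,\gamma}_{loc}(\mathbb{R}^N)$ for every $\gamma\in(1,+\infty)$, whence $\check u^{*,\alpha}_\mu\in C^{2}(\mathbb{R}^N)$ by Schauder theory, exactly as at the end of the proof of Lemma \ref{lemMinim}; finally the Harnack inequality yields $\check u^{*,\alpha}_\mu>0$ on $\mathbb{R}^N$. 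There is no genuine obstacle here: the only thing to check is that symmetrization does not destroy minimality, and this is immediate precisely because the potentially dangerous quantities $Q$, $A$, $B$ are symmetrization-invariant; everything else is a word-for-word repetition of arguments already established in Sections 3 and 7.
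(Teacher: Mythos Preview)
Your proposal is correct and follows exactly the approach the paper intends: the paper does not write out a separate proof for Proposition~\ref{RadProp2} but simply states that ``the same conclusion can be drawn for the prescribed mass minimization problem \eqref{minmassI}'', i.e., one repeats the Schwarz-symmetrization argument of Proposition~\ref{RadProp} with $H_\mu$ subject to $Q(u)=\alpha$ in place of $\lambda^S_\mu$. Your write-up makes this explicit and adds the regularity/positivity step via Brezis--Kato, elliptic estimates and Harnack, precisely as at the end of Lemma~\ref{lemMinim}.
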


It is worth pointing out that by the McLeod theorem (see Theorem 2 in  \cite{McLeod}),  under assumptions $2<q<p<2^*$, $\mu>0$, $\lambda\leq 0$ equation \eqref{1S} has at most one positive, radial solution. 

We need also the following consequences of the Shibata theorem  
\begin{corollary}\label{corShib}
	Assume that $\alpha>\alpha_0(\mu)$. Any minimizer $w\in H^1$ of problem \eqref{minmassI} satisfies
	\begin{equation}\label{EQnu0}
DH_\mu(w)+\tau DQ(w)=0,
\end{equation}
where $\tau$ is a Lagrange multiplier such that $\tau>0$.
\end{corollary}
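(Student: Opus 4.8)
\textbf{Proof proposal for Corollary \ref{corShib}.}
The plan is to obtain \eqref{EQnu0} from the Lagrange multiplier rule applied to the constrained minimization problem \eqref{minmassI}, and then to pin down the sign of the multiplier by testing the Euler--Lagrange equation against a suitable variation. First I would recall that by the Shibata Theorem, since $\alpha>\alpha_0(\mu)$, the problem \eqref{minmassI} admits a minimizer $w\in H^1\setminus 0$, and that the constraint functional $Q$ is $C^1$ with $DQ(w)=w\neq 0$ in $H^{-1}$, so the constraint manifold $\{u:Q(u)=\alpha\}$ is a genuine $C^1$-submanifold near $w$. The classical Lagrange multiplier theorem then yields a real number $\tau$ with $DH_\mu(w)+\tau DQ(w)=0$, which is precisely \eqref{EQnu0}; written out, this says $w$ is a weak solution of \eqref{1S} with $\lambda=-\tau$.

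Next I would determine the sign of $\tau$. Pairing \eqref{EQnu0} with $w$ itself gives the Nehari-type identity $T(w)-\mu A(w)+B(w)+2\tau Q(w)=0$. To get a second relation I would use the scaling $w_\sigma=w(\cdot/\sigma)$: since $Q(w_\sigma)=\sigma^N Q(w)$, the rescaled function $\sigma^{-N/p_0}$-normalized in mass stays admissible, and minimality of $w$ forces the Pohozaev-type identity obtained from $\frac{d}{d\sigma}H_\mu(w_\sigma)$ evaluated along the mass-preserving reparametrization to vanish; concretely one gets $(N-2)T(w)=2N(\mu A(w)/p - B(w)/q)$. Combining the Nehari and Pohozaev identities eliminates the nonlinear terms enough to express $\tau Q(w)$ as a positive multiple of $T(w)$ together with terms controlled by the sign information $\hat H^\alpha_\mu<0$ supplied by the Shibata Theorem for $\alpha>\alpha_0(\mu)$. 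Indeed, from $H_\mu(w)=\hat H^\alpha_\mu<0$ and the two identities one solves for $2\tau Q(w)$ and finds it equals a strictly positive quantity; since $Q(w)=\alpha>0$, this gives $\tau>0$.

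The main obstacle I anticipate is the sign argument for $\tau$, not the existence of the multiplier. One has to be careful that the scaling variation is genuinely admissible (it must be composed with a rescaling of amplitude to keep $Q$ fixed, which mixes the two one-parameter families $t\mapsto tw$ and $\sigma\mapsto w_\sigma$ used throughout Section 2), and one must invoke $\hat H^\alpha_\mu<0$ at the right place: if instead $\hat H^\alpha_\mu=0$ the multiplier could vanish, which is exactly why the hypothesis $\alpha>\alpha_0(\mu)$ rather than merely $\alpha>0$ is needed. An alternative, perhaps cleaner, route to $\tau>0$ is to note that $w$ solves \eqref{1S} with $\lambda=-\tau$, that $w$ is (by Proposition \ref{RadProp2} and the McLeod uniqueness theorem) the positive radial ground state, and then to apply Kato's theorem as quoted in the Remark after Theorem \ref{thm1}, which forbids such solutions for $\lambda>0$, together with the non-existence statement $(2^o)$ of Theorem \ref{thm2} (or Corollary \ref{lemMinimCor}), which forbids $\lambda=0$ in the relevant parameter range; this leaves $\lambda<0$, i.e. $\tau>0$. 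I would present the scaling-identity argument as the primary proof and mention the Kato/uniqueness route as a remark.
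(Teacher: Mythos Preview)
Your derivation of \eqref{EQnu0} via the Lagrange multiplier rule is fine and matches the paper. The sign argument, however, contains a concrete error: the identity you write, $(N-2)T(w)=2N(\mu A(w)/p-B(w)/q)$, is the Pohozaev identity for a solution with $\lambda=0$, i.e.\ with $\tau=0$, which is exactly what you are trying to exclude. The mass-preserving dilation $\sigma^{-N/2}w(\cdot/\sigma)$ actually yields $T(w)=\tfrac{N(p-2)}{2p}\mu A(w)-\tfrac{N(q-2)}{2q}B(w)$, while the full Pohozaev identity for a solution of \eqref{1S} with $\lambda=-\tau$ reads $\tfrac{N-2}{2N}T(w)+\tau Q(w)=\mu A(w)/p-B(w)/q$. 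With the \emph{correct} identity your strategy does go through: substituting into $H_\mu(w)=\hat H^\alpha_\mu<0$ gives $\tau Q(w)>\tfrac{1}{N}T(w)>0$, hence $\tau>0$. Your alternative route, on the other hand, is not salvageable as written: the corollary concerns an \emph{arbitrary} minimizer $w$, so you cannot invoke McLeod uniqueness or Kato's theorem (both of which require radial symmetry in the form quoted), and Theorem~\ref{thm2}\,$(2^o)$ treats the case $2<p<q$, whereas the Shibata setting here is $2<q<p$, where zero-frequency solutions do exist.

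The paper's argument is entirely different and much more elementary: it never touches Pohozaev. Assuming $\tau<0$, it tests with the scalar variation $w_b=(1-t_b)w$ for small $t_b>0$, which strictly lowers $H_\mu$ to first order while strictly decreasing the mass to $\alpha_b<\alpha$, and then appeals to the monotonicity of $\alpha\mapsto\hat H^\alpha_\mu$ from the Shibata Theorem to reach a contradiction. This avoids all the Nehari/Pohozaev bookkeeping; on the other hand, as written it only excludes $\tau<0$, whereas your (corrected) Pohozaev computation yields the strict inequality $\tau>0$ directly.
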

\begin{proof} We follows an idea from \cite{ jeanSc}. Assume that $w\in H^1$ is a minimizer of  \eqref{minmassI}. Then by the Lagrange multipliers rule there exists $\tau \in \mathbb{R}$ such that \eqref{EQnu0} is satisfied. Hence
$$
-\tau=\frac{DH_\mu(w)(w)}{2Q(w)}.
$$
Suppose that $\tau<0$. Then 	$H_\mu'(w):=DH_\mu(w)(w)\equiv \frac{d}{dt}H_\mu(tw)|_{t=1}>0$. Since $H_\mu((1-t)w)=\hat{H}^{\alpha}_\mu-t(H_\mu'(w)+o(1))$ as $t \to 0$, we can fix 
a small $t_b > 0$ such that $w_b = (1 - t_b)w$ satisfies $H_\mu(w_b) < \hat{H}^{\alpha}_\mu$.
 Thus, we have
$$
\hat{H}^{\alpha_b}_\mu:=\min\{H_\mu(u):Q(u)=\alpha_b, ~u \in H^1\setminus 0\}\leq H_\mu(w_b)< \hat{H}^{\alpha}_\mu.
$$ 
Observe $\alpha_b:=Q(w_b)=(1-t_0)^2Q(w)<\alpha$. Hence by the Shibata theorem  $\hat{H}^{\alpha_b}_\mu\geq \hat{H}^{\alpha}_\mu$. We get a contradiction. 
\end{proof}
Let $\mu >0$. Define 
$$
\alpha_1(\mu):=\inf_{S > S(\mu)}\alpha^S,~~\bar{\alpha}(\mu):=\sup_{S > S(\mu)}\alpha^S.
$$
Then $0\leq \alpha_1(\mu)<\bar{\alpha}(\mu)\leq +\infty$.
\begin{proposition}\label{ALPHA}
(i) $\bar{\alpha}(\mu)=+\infty$; (ii) $\alpha_1(\mu)\leq \alpha_0(\mu)	$.
\end{proposition}
\begin{proof}
	Let us prove (i). To obtain a contradiction, suppose that $\bar{\alpha}(\mu)<+\infty$.  Fix $\alpha>\max\{\alpha_0(\mu),\bar{\alpha}(\mu)\}$. By Proposition \ref{RadProp2} there exists a positive radial solution $v_\alpha$ of \eqref{minmassI}, i.e.,  $DS_{\tilde{\lambda}, \mu}(v_\alpha)=0$ with some 	$\tilde{\lambda}$. By Corollary \ref{corShib}, $-\tau:=\tilde{\lambda}<0$. It follows from  Corollary \ref{corGS} and Proposition \ref{RadProp} that \eqref{1S} possesses a  positive physical ground state  	$\hat{u}_{\tilde{\lambda}} \in \hat{G}^{\tilde{S}}(\mu)$ such that  $\tilde{S}:=S_{\tilde{\lambda}, \mu}(\hat{u}_{\tilde{\lambda}})\in (S(\mu), +\infty)$ and $DS_{\tilde{\lambda}, \mu}(\hat{u}_{\tilde{\lambda}})=0$.
 The McLeod Theorem \cite{McLeod} yields that  $v_\alpha=\hat{u}^{\tilde{S}}_\mu$,  and consequently $\alpha=Q(v_\alpha)=Q(\hat{u}^{\tilde{S}}_\mu)$. But  $Q(\hat{u}^{\tilde{S}}_\mu)=\alpha^{\tilde{S}}<\bar{\alpha}(\mu)$ for $\tilde{S} \in (S(\mu), +\infty)$, which contradicts the assumption. The proof of (ii) is similar.
\end{proof}

Let $S>S(\mu)$ such that $\alpha^S \in (\alpha_0(\mu), +\infty)$. Consider 
\begin{equation}\label{minmassR}
	\check{\lambda}_\mu^S:=\min\{\Lambda^{S}_\mu(u)\equiv \frac{H_\mu(u)-S}{Q(u)}:~Q(u)=\alpha^S, ~u \in H^1\setminus 0\}.
\end{equation}
Notice that  $\{S>S(\mu): \alpha^S \in (\alpha_0(\mu), +\infty)\} \neq \emptyset$ since $\bar{\alpha}(\mu)=+\infty$ and $\alpha_1(\mu)\leq \alpha_0(\mu)	$. 
Observe that problem \eqref{minmassR}  is equivalent to the mass prescribed minimization problem \eqref{minmassI} so that the set of solutions of \eqref{minmassR} coincides with $\mathcal{M}_\mu(\alpha^S)$, i.e.,
\begin{equation*}\label{FIN}
	\mathcal{M}_\mu(\alpha^S)=\{u \in H^1: \Lambda_\mu^{S}(u)= \check{\lambda}_\mu^S,~ Q(u)=\alpha^S\},
\end{equation*}
and $\check{\lambda}_\mu^S=(\hat{H}^{\alpha^S}_\mu-S)/\alpha^S$. Furthermore, since $\Lambda^{S}_\mu(u)\leq \lambda^{S}_\mu(u)$, $\forall u \in H^1\setminus 0$, we have 
\begin{equation}\label{nerav}
		\check{\lambda}_\mu^S\leq \hat{\lambda}_\mu^{S},\,\,\, \forall \alpha^S \in (\alpha_0(\mu), +\infty).
\end{equation}


\begin{lemma}\label{LemALP}
For any $\alpha \in (\alpha_0(\mu), +\infty)$, there exists $S>S(\mu)$ such that $\alpha=\alpha^S$ and $\check{\lambda}_\mu^S=\hat{\lambda}_\mu^S$.
\end{lemma}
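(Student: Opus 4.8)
The plan is to show that the infimum defining $\hat\lambda_\mu^S$ via the NG-Rayleigh quotient is actually attained on the mass-constraint manifold $\{Q(u)=\alpha^S\}$, and that this forces $\check\lambda_\mu^S=\hat\lambda_\mu^S$. By \eqref{nerav} we already have $\check\lambda_\mu^S\le\hat\lambda_\mu^S$ for every admissible $S$, so the real content is the reverse inequality for a well-chosen $S$. First I would use Proposition \ref{ALPHA} together with the continuity of $S\mapsto\alpha^S$ on $(S(\mu),+\infty)$ (Lemma \ref{DifIDENT}) and the intermediate value theorem to produce, for the given $\alpha\in(\alpha_0(\mu),+\infty)$, at least one $S>S(\mu)$ with $\alpha^S=\alpha$; since $\bar\alpha(\mu)=+\infty$ and $\alpha_1(\mu)\le\alpha_0(\mu)$, such $S$ exists and $\alpha^S\in(\alpha_0(\mu),+\infty)$, so \eqref{minmassR} is meaningful.

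Next I would analyze a minimizer $w$ of \eqref{minmassR}, i.e. $w\in\mathcal M_\mu(\alpha^S)$. By Corollary \ref{corShib} there is a Lagrange multiplier $\tau>0$ with $DH_\mu(w)+\tau DQ(w)=0$, hence $DS_{\lambda,\mu}(w)=0$ with $\lambda=-\tau<0$, and $S_{\lambda,\mu}(w)=H_\mu(w)-\lambda Q(w)$; here $H_\mu(w)=\hat H_\mu^{\alpha^S}$, so $\lambda=(\hat H_\mu^{\alpha^S}-\tilde S)/\alpha^S$ evaluated at $\tilde S=S_{\lambda,\mu}(w)$, which is exactly $\Lambda^{\tilde S}_\mu(w)=\check\lambda_\mu^{\tilde S}$ in that normalization. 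The key point is that, because $w$ solves \eqref{1S} with frequency $\lambda<0$, we have $\sigma^{\tilde S}(w)=1$, so $\Lambda^{\tilde S}_\mu(w)=\lambda^{\tilde S}_\mu(w)\ge\hat\lambda_\mu^{\tilde S}$; combined with Corollary \ref{corGS}/Proposition \ref{RadProp} and the McLeod uniqueness theorem, $w$ coincides (up to the symmetry the problem is invariant under) with the fundamental frequency ground state $\hat u^{\tilde S}_\mu$, whence $Q(w)=\alpha^{\tilde S}$. But $Q(w)=\alpha^S=\alpha$, and $S\mapsto\alpha^S$ paired with $S\mapsto\hat\lambda_\mu^S$ are inverse-to-each-other monotone functions (Corollary \ref{corContin}, Corollary \ref{CorSl}), so $\tilde S=S$. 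Therefore $\check\lambda_\mu^S=\Lambda^{S}_\mu(w)=\lambda^{S}_\mu(w)=\hat\lambda_\mu^S$.

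An alternative, more self-contained route avoids McLeod: take the minimizer $\hat u^S_\mu\in\hat G^S(\mu)$ from Theorem \ref{thmB}, which has $Q(\hat u^S_\mu)=\alpha^S$ by \eqref{alphaE} and $\sigma^S(\hat u^S_\mu)=1$, so it is admissible in \eqref{minmassR} and gives $\check\lambda_\mu^S\le\Lambda^S_\mu(\hat u^S_\mu)=\lambda^S_\mu(\hat u^S_\mu)=\hat\lambda_\mu^S$ — but that is just \eqref{nerav} again. To get equality one still must show no competitor $u$ with $Q(u)=\alpha^S$ does strictly better; this is where one genuinely needs that the minimizer of \eqref{minmassR} is, after rescaling, a critical point of $\lambda^S_\mu$, which by Corollary \ref{corShib} has $\lambda<0$ and hence (by Lemma \ref{lem1} and Corollary \ref{CorSl}) is a fundamental frequency ground state at its own action level, pinned back to $S$ by the monotone bijection $S\leftrightarrow\alpha^S$. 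So both routes hinge on the same mechanism.

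\textbf{The main obstacle} I anticipate is bookkeeping the normalizations: a minimizer $w$ of \eqref{minmassR} need not a priori satisfy $\sigma^{S}(w)=1$ (only $Q(w)=\alpha^S$ is imposed), so one must first pass to the critical-point characterization via the Lagrange multiplier, read off that the resulting $\lambda$ is negative (Corollary \ref{corShib}), and only then invoke that a genuine solution of \eqref{1S} automatically has $\sigma^{S_{\lambda,\mu}(w)}(w)=1$ at its own action value $S_{\lambda,\mu}(w)$ — which may differ from the $S$ we started with. Closing the loop $S_{\lambda,\mu}(w)=S$ requires the uniqueness input (McLeod) or, equivalently, the injectivity of $S\mapsto(\alpha^S,\hat\lambda^S_\mu)$; making that identification airtight, rather than merely plausible, is the delicate step. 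Everything else is an assembly of the monotonicity and continuity facts established in Sections 3–6.
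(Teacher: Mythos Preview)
Your approach assembles the same ingredients as the paper (Shibata's theorem, Corollary~\ref{corShib} for $\lambda<0$, McLeod uniqueness, Proposition~\ref{RadProp}/Corollary~\ref{corGS}), but the order in which you use them creates the very obstacle you worry about. You first pre-select $S$ via the intermediate value theorem so that $\alpha^S=\alpha$, and then try to show that the mass-constrained minimizer $w$ has action $\tilde S=S$. To close that loop you invoke a ``monotone bijection $S\leftrightarrow\alpha^S$'', but the paper never proves injectivity of $S\mapsto\alpha^S$ (only continuity, Lemma~\ref{DifIDENT}); nothing in Sections~3--6 excludes $\alpha^{S}=\alpha^{\tilde S}$ with $S\neq\tilde S$. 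So as written there is a genuine gap.

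The paper sidesteps this entirely: it does \emph{not} pre-select $S$. It starts from $\alpha>\alpha_0(\mu)$, takes a \emph{positive radial} minimizer $v_\alpha$ of the mass problem (Proposition~\ref{RadProp2}), reads off $\lambda<0$ from Corollary~\ref{corShib}, and then \emph{defines} $S:=S_{\lambda,\mu}(v_\alpha)$. McLeod's uniqueness then forces $v_\alpha=\hat u_\mu^{S}$ (the radial physical ground state at that action level), whence $\alpha=Q(v_\alpha)=\alpha^{S}$ and $\check\lambda_\mu^{S}=\hat\lambda_\mu^{S}$ fall out simultaneously. No IVT, no injectivity of $\alpha^{(\cdot)}$, and no need to reconcile two different $S$'s. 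Your argument is salvaged by the same move: discard the IVT step and simply take $S:=\tilde S$; since the lemma only asks for existence of \emph{some} $S$, this suffices. Note also that McLeod applies to positive \emph{radial} solutions, so you must pass through Proposition~\ref{RadProp2} (radial minimizer) and Proposition~\ref{RadProp} (radial ground state), a point your write-up leaves implicit.
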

\begin{proof} Assume that $\alpha >\alpha_0(\mu)$. Consider 
\begin{equation}\label{minmasKAPP}
	\kappa:=\min\{\frac{H_\mu(u)}{Q(u)}:~Q(u)=\alpha,~u \in H^1\setminus 0\}.
\end{equation}
Evidently if $\alpha=\alpha^S$ for  $\alpha^S \in (\alpha_0(\mu), +\infty)$, then  \eqref{minmasKAPP} is equivalent to \eqref{minmassR}. The Shibata 
theorem and Proposition \ref{RadProp2} imply that there exists a positive, radial  minimizer $v_\alpha$ of \eqref{minmasKAPP}.  The Lagrange multipliers rule yields
\begin{equation}\label{EQnu}
D\frac{H_\mu(v_\alpha)}{Q(v_\alpha)}+\nu^\alpha DQ(v_\alpha)=0,
\end{equation}
where $\nu^\alpha \in \mathbb{R}$ is a Lagrange multiplier.  Thus, $DS_{\lambda, \mu}(v_\alpha)=0$, where $\lambda:=
\kappa-\nu^\alpha\alpha$, and $\lambda<0$ due to Corollary \ref{corShib}. 
By the McLeod theorem  \cite{McLeod}, $v_\alpha$ is a unique positive radial solution of the equation $DS_{\lambda, \mu}(v_\alpha)=0$. On the other hand,  Proposition \ref{RadProp} and Corollary \ref{corGS} implies that this equation has a  positive radial ground state $\hat{u}_{\lambda}=\hat{u}_\mu^{S} \in \hat{G}^{S}(\mu)$ with $S:=S_{\lambda, \mu}(\hat{u}_{\lambda})$. Hence, $v_\alpha=\hat{u}_\mu^{S} \in \hat{G}^{S}(\mu)$ and  by \eqref{alphaE}, $\alpha=Q(v_\alpha)=Q(\hat{u}_\mu^{S})=\alpha^{S}$. Thus, $\hat{u}_\mu^{S}$ is a minimizer of \eqref{minmassR} and $\check{\lambda}_\mu^S=\hat{\lambda}_\mu^S$, $\alpha=\alpha^S$. 
\end{proof}

{\it \textbf{Proof Theorem \ref{thm5}}}:\,\,
 For any $\alpha \in (\alpha_0(\mu), +\infty)$, there exists $S>S(\mu)$ such that $\hat{G}^{S}(\mu)\subseteq \mathcal{M}_\mu(\alpha)$.  Indeed, fix $\alpha >\alpha_0(\mu)$, then by Lemma \ref{LemALP} there exists $S>S(\mu)$ such that $\alpha=\alpha^S$ and $\check{\lambda}_\mu^S=\hat{\lambda}_\mu^S$. 
Hence by \eqref{alphaE} and \eqref{minmassR}, we have
\begin{align*}
\hat{G}^{S}(\mu)\subseteq \{u \in H^1: &\Lambda_\mu^{S}(u)= \hat{\lambda}_\mu^S,~\sigma^{S}(u)=1,~ Q(u)=\alpha^S\}	\subseteq \\
&\{u \in H^1: \Lambda_\mu^{S}(u)= \check{\lambda}_\mu^S,~ Q(u)=\alpha\}=\mathcal{M}_\mu(\alpha).
\end{align*}
 Now taking into account that by the Shibata theorem $\mathcal{M}_\mu(\alpha)$ is an orbital stable set  of solutions of \eqref{1S}, we obtain the proof.

\bibliographystyle{amsplain}

\end{document}